\newcommand{\ud}{\mathrm{d}}
\newcommand{\cH}{\mathcal{H}}
\newcommand{\ran}{\mathrm{ran}}
\newcommand{\N}{\mathbb N}
\theoremstyle{plain}
\newtheorem{theorem}{Theorem}[section]
\newtheorem{lemma}[theorem]{Lemma}
\newtheorem{corollary}[theorem]{Corollary}
\newtheorem{proposition}[theorem]{Proposition}
\theoremstyle{definition}
\newtheorem{remark}[theorem]{Remark}
\numberwithin{equation}{section}
\begin{document}

\title[Convergence of conjugate gradient with unbounded operators]
{Convergence of the conjugate gradient method with unbounded operators}
\author[N.~Caruso]{Noe Caruso}
\address[N.~Caruso]{International School for Advanced Studies -- SISSA \\ via Bonomea 265 \\ I-34136 Trieste (ITALY), and Gran Sasso Science Institute -- GSSI \\  Viale Francesco Crispi 7 \\ I-67100 L'Aquila (ITALY).}
\email{noe.caruso@gssi.it}
\author[A.~Michelangeli]{Alessandro Michelangeli}
\address[A.~Michelangeli]{Institute for Applied Mathematics \\ and Hausdorff Center for Mathematics, University of Bonn \\ Endenicher Allee 60 \\ 
D-53115 Bonn (GERMANY).}
\email{michelangeli@iam.uni-bonn.de}

%\dedicatory{}

\begin{abstract}
 In the framework of inverse linear problems on infinite-dimensional Hilbert space, we prove the convergence of the conjugate gradient iterates to an exact solution to the inverse problem in the most general case where the self-adjoint, non-negative operator is unbounded and with minimal, technically unavoidable assumptions on the initial guess of the iterative algorithm. The convergence is proved to always hold in the Hilbert space norm (error convergence), as well as at other levels of regularity (energy norm, residual, etc.) depending on the regularity of the iterates. We also discuss, both analytically and through a selection of numerical tests, the main features and differences of our convergence result as compared to the case, already available in the literature, where the operator is bounded. 
\end{abstract}

\date{\today}
\makeatletter{\renewcommand*{\@makefnmark}{}
\footnotetext{Operators and Matrices.}\makeatother}
\subjclass[]{33C47,41A65,46N40,47B25,47N40}

% 33C47 Other special orthogonal polynomials and functions 
% 41A65 Abstract approximation theory (approximation in normed linear spaces and other abstract spaces)
% 46N40 Applications of functional analysis in numerical analysis
% 47B25 Linear symmetric and selfadjoint operators (unbounded)
% 47N40 Applications of operator theory in numerical analysis 

\keywords{inverse linear problems, infinite-dimensional Hilbert space, ill-posed problems, Krylov subspaces methods, conjugate gradient, self-adjoint operators, spectral measure, orthogonal polynomials}

\thanks{Partially supported by the Alexander von Humboldt Foundation.}

\maketitle

%\tableofcontents

\section{Introduction}\label{intro}

We are concerned in this work with the rigorous proof of the convergence, in various meaningful senses, of a particular and well-known iterative algorithm for solving inverse linear problems, the celebrated conjugate gradient method, in the generalised setting of \emph{unbounded} operators on Hilbert space.

In abstract terms, given a Hilbert space $\cH$ over the (real or) complex field and a non-negative self-adjoint operator $A$ on $\cH$, we consider the \emph{inverse linear problem}
\begin{equation}\label{eq:invLP}
 Af\,=\,g\,,\qquad g\in\ran A
\end{equation}
in the unknown $f\in\cH$ with datum $g$ -- assuming $g\in\ran A$ makes the problem \eqref{eq:invLP} solvable. $A$ is allowed to be unbounded, in which case necessarily $\cH$ has infinite dimension and the domain $\mathcal{D}(A)$ of $A$ is only a dense subspace of $\cH$. The positivity assumption on $A$ reads $\langle \psi,A\psi\rangle\geqslant 0$ for all $\psi\in\mathcal{D}(A)$: here and in the following $\langle\,\cdot\,,\,\cdot\rangle$ is the scalar product in $\cH$: if $\cH$ is taken over the complex field, then $\langle\,\cdot\,,\,\cdot\rangle$ is assumed to be anti-linear in the first entry and linear in the second, and $\|\cdot\|$ is the corresponding norm. For a positive, self-adjoint operator $A$ we shall also use the customary notation $A=A^*\geqslant\mathbb{O}$.

This setting generalises the classical, \emph{finite-dimensional} one where $\cH=\mathbb{C}^d$ for some $d\in\mathbb{N}$ and $A$ is a $d\times d$ positive semi-definite matrix (in which case \eqref{eq:invLP} can be interpreted as a system of $d$ linear equations), as well as the setting where $A$ is a bounded (and everywhere-defined) self-adjoint operator on an \emph{infinite-dimensional} Hilbert space.

Infinite dimensionality is also the framework where the phenomenon of ill-posed\-ness may occur. Indeed, it is a standard fact that for a (not necessarily bounded) self-adjoint operator $A$ on an infinite-dimensional Hilbert space $\cH$ the properties
\begin{itemize}
 \item[(i)] the point 0 belongs to $\sigma(A)$ and is not isolated in $\sigma(A)$,
 \item[(ii)] $\ran A$ is not closed,
 \item[(iii)] on $\ran A$ the operator $A$ has unbounded inverse,
\end{itemize}
are all equivalent (and none could occur if $\dim\cH<+\infty$): when any such property is satisfied, the solution $f$ cannot depend continuously on the datum $g$, as is evident from (iii), and the problem \eqref{eq:invLP} is said to be \emph{ill-posed}.

As opposite to that, if any among (i), (ii), and (iii) above fails to hold and in addition $A$ is injective, and hence equivalently if $A$ has an everywhere-defined bounded inverse, the problem \eqref{eq:invLP} is \emph{well-posed}: in this case the solution $f$ exists, is unique, and depends continuously on the datum $g$.

A popular algorithm for the numerical solution to \eqref{eq:invLP} in the above-mentioned classical framework is the method of conjugate gradients (also referred to as CG). It was first proposed in 1952 by Hestenes and Stiefel \cite{Hestenes-Stiefer-ConjGrad-1952} and since then, together with its related derivatives (e.g., conjugate gradient method on the normal equations (CGNE), least-square QR method (LSQR), etc.), it has been widely studied in the finite-dimensional setting (see the monographs \cite{Saad-2003_IterativeMethods,Simoncini-Szyld-2007,Liesen-Strakos-2003}) and also, though to a lesser extent, in the infinite-dimensional Hilbert space setting with bounded operators.

In order to describe the algorithm explicitly, let us introduce the solution manifold
\begin{equation}
 \mathcal{S}\;:=\;\{f\in\mathcal{D}(A)\,|\,Af=g\}
\end{equation}
relative to the problem \eqref{eq:invLP}. By assumption $\mathcal{S}$ is a convex, non-empty set in $\cH$ which is also closed, owing to the fact that $A$, being self-adjoint, is in particular a closed operator. (In fact, $\mathcal{S}$ is an affine space, owing to the linearity of $A$.) As a consequence, the projection map $P_\mathcal{S}:\cH\to\mathcal{S}$ is unambiguously defined and produces, for generic $x\in\cH$, the closest-to-$x$ point in $\mathcal{S}$. Observe that $P_\mathcal{S}$ is not a linear map.

In its iterative implementation, the conjugate gradient algorithm starts with an initial guess $f^{[0]}\in\cH$ and produces iterates $f^{[N]}$ according to a prescription that can be described in various equivalent ways \cite{Saad-2003_IterativeMethods,Liesen-Strakos-2003}, the most convenient of which for our purposes is
\begin{equation}\label{eq:CG-theta1}
 f^{[N]}\;:=\qquad\;\arg\!\!\!\!\!\!\!\!\!\!\!\!\min_{\!\!\!\!\!\!\!\!\!\!\!\!\! h\in\{f^{[0]}\}+\mathcal{K}_N(A,\mathfrak{R}_0)}\|A^{1/2}(h-P_\mathcal{S}f^{[0]})\|\,,\qquad N\in\mathbb{N}\,.
\end{equation}
More generally, we shall discuss conjugate gradient style algorithms with iterates given by
\begin{equation}\label{eq:CG-theta_generic}
 f^{[N]}\;:=\qquad\;\arg\!\!\!\!\!\!\!\!\!\!\!\!\min_{\!\!\!\!\!\!\!\!\!\!\!\!\! h\in\{f^{[0]}\}+\mathcal{K}_N(A,\mathfrak{R}_0)}\|A^{\theta/2}(h-P_\mathcal{S}f^{[0]})\|\,,\qquad N\in\mathbb{N}
\end{equation}
for some parameter $\theta\geqslant 0$ (the case $\theta=1$ being \emph{the} conjugate gradient method). It will be then convenient to refer to such $f^{[N]}$'s as the $\theta$-iterates. 
%%%%%%%%%%%%%%Equivalently, one can minimise the functional 

In \eqref{eq:CG-theta1}-\eqref{eq:CG-theta_generic} the vector $\mathfrak{R}_0$ is the zero-th order of the \emph{residuals} defined by
\begin{equation}
 \mathfrak{R}_N\;:=\;A f^{[N]}-g\,,\qquad N\in\mathbb{N}_0
\end{equation}
in terms of each iterate, and the vector space
\begin{equation}\label{eq:KN}
 \mathcal{K}_N(A,\mathfrak{R}_0)\;:=\;\mathrm{span}\{\mathfrak{R}_0,A\mathfrak{R}_0,\dots A^{N-1}\mathfrak{R}_0\}\,,\qquad N\in\mathbb{N}
\end{equation}
is the \emph{$N$-th order Krylov subspace} associated to $A$ and $\mathfrak{R}_0$.

Let us underline that \eqref{eq:CG-theta1}-\eqref{eq:CG-theta_generic} give the \emph{variational} characterisation of the conjugate gradient algorithm, and we start from such formulas because our subsequent study of the algorithm's convergence will be variational in nature; as well known, in practice the algorithm is implemented numerically through equivalent \emph{algebraic} versions \cite{Saad-2003_IterativeMethods,Simoncini-Szyld-2007,Liesen-Strakos-2003}, that produce the same iterates $f^{[N]}$ without of course requiring the a priori knowledge of the solution $P_\mathcal{S}f^{[0]}$.

Clearly, the above definitions are all well-posed if $A$ is bounded, whereas for \eqref{eq:KN} and hence \eqref{eq:CG-theta1}-\eqref{eq:CG-theta_generic} to make sense for any $N$ when $A$ is \emph{unbounded}, additional technical assumptions are needed in order to avoid possible domain issues. We shall discuss them in the general set-up of the problem presented in Section \ref{sec:setup_mainresults} -- but let us emphasise already at this stage that even when domain issues are taken under control, the unbounded-case framework that we are considering in this work remains a non-trivial generalisation at all of the bounded case.

As well known, for \emph{finite-dimensional} inverse problems CG is an extremely popular, versatile, and efficient numerical scheme -- it belongs, in particular, to the class of Krylov subspace methods, that are sometimes  even counted among the `Top 10 Algorithms' of the 20th century \cite{Dongarra-Sullivan-Best10-2000,Cipra-SIAM-News} -- and the convergence of $f^{[N]}$ to the exact solution $f$ is by now a classical and deeply understood theory (see, e.g., the monographs \cite{Saad-2003_IterativeMethods,Liesen-Strakos-2003}).

The convergence theory of CG has been markedly less explored in the setting of \emph{infinite-dimensional} $\cH$, a line of investigation in which yet important works have been produced over the last five decades, both in the scenario where $A$ is bounded with everywhere-defined bounded inverse \cite{Daniel-1967,Daniel-1970,Herzog-Ekkehard-2015}, or at least with bounded inverse on its range \cite{Kammerer-Nashed-1972}, and in the scenario where $A$ is bounded with possible unbounded inverse on its range \cite{Kammerer-Nashed-1972,Nemirovskiy-Polyak-1985,Nemirovskiy-Polyak-1985-II,Louis-1987,Brakhage-1987}.

In contrast, the scenario where $A$ is \emph{unbounded} has been only recently considered from special perspectives, in particular in view of existence \cite{Olver-2009} (for GMRES algorithms), or convergence when $A$ is regularised and made invertible with everywhere-defined bounded inverse \cite{Gilles-Townsend-2019}, whereas the general convergence theory (that is, including the case where \eqref{eq:invLP} is ill-posed) is virtually unexplored.

(We should also like to mention the ongoing related analysis on abstract Krylov methods in infinite dimension and with possible unbounded $A$: see \cite{CMN-2018_Krylov-solvability-bdd,CM-2019_ubddKrylov,CM-KrylovPerturbations-2020,CMN-truncation-2018} and the references therein.)

In the present work we establish a class of convergence results for the conjugate gradient algorithm precisely in the most general setting where $A$ is unbounded and the associated inverse problem \eqref{eq:invLP} is possibly ill-posed. This applies, in particular, to the ubiquitous case where $A=-\Delta$ on $L^2(\mathbb{R}^d)$. Our analysis consists of a non-trivial generalisation of the very subtle approach by Nemirovskiy and Polyak from their above-mentioned 1984 work \cite{Nemirovskiy-Polyak-1985} for bounded $A$.

In such work the full convergence estimates of the error $\|f^{[N]}-f\|$ and residual $\|Af^{[N]}-g\|$ were proved, and in the follow-up work \cite{Nemirovskiy-Polyak-1985-II} the results were shown to be \emph{optimal} in the sense that for the entire class of bounded, ill-posed problems, one can do no better than the estimates provided. The boundedness of $A$ was crucial in a two-fold way. First, it forced the blow-up of a suitable sum ($\delta_N$, in their notation -- see \eqref{eq:defdeltaN} below) of the reciprocals of the $N$ zeroes of a polynomial that represents the minimisation \eqref{eq:CG-theta_generic}: since no such zero can exceed $\|A\|$, the reciprocals cannot vanish and their sum necessarily diverges. As a consequence, the error and the residual, which in turn can be controlled by an inverse power of $\delta_N$, are then shown to vanish as $N\to\infty$, thus establishing convergence. Second, boundedness of $A$ was also determinant to quantify the convergence, as the latter was boiled down to a min-max procedure for polynomials on the \emph{finite} spectral interval containing $\sigma(A)$, then on such an interval (suitable modifications of) the Chebyshev polynomials are recognised to optimise the rate of convergence, and explicit properties of (the zeroes of) Chebyshev polynomials finally provide a quantitative version of the vanishing of error and residual.

In our approach we are able to bypass the restriction of the finiteness of $\|A\|$ as far as the convergence alone is concerned. As for the quantitative rate, the min-max strategy of \cite{Nemirovskiy-Polyak-1985} by no means can be adapted to polynomials over the whole $[0,+\infty)$ and in fact a careful analysis of the structure of the proof of \cite{Nemirovskiy-Polyak-1985,Nemirovskiy-Polyak-1985-II}, as we shall comment in due time, seems to indicate that if $A$ is unbounded with unbounded inverse on its range, then the convergence rate can be arbitrarily small.

\bigskip

The discussion is organised as follows. In Section \ref{sec:setup_mainresults} we introduce the rigorous set-up of the convergence problem for the conjugate gradient method and we state and comment our main result. In Section \ref{sec:technical} we develop an amount of preparatory materials of algebraic and measure-theoretic nature, which are needed to finally prove our main theorem in Section \ref{sec:mainproof}. After the proof, an amount of remarks are collected with the purpose of clarifying the importance of certain technical steps and, above all (Remarks \ref{rem:comparison1}-\ref{rem:comparison2}) the actual novelties and differences of the present scheme as compared to the bounded case scenario. Last, in Section \ref{sec:num_test} we discuss a selection of numerical tests that confirm the main features of our convergence result and corroborate our intuition on certain relevant differences with respect to the bounded case.

\section{ Set-up and main results}\label{sec:setup_mainresults}

Let us start with the rigorous formulation of all the notions needed for our convergence result. Here and in the following $A$ is a non-negative, densely defined, self-adjoint operator on a Hilbert space $\cH$, including the possibility that $A$ be unbounded and with a non-trivial kernel.

First, one needs to ensure that the conjugate gradient iterates are \emph{well-defined}. As mentioned in the Introduction, one chooses a datum $g\in\ran A$ and an initial guess $f^{[0]}\in\cH$, and for some $\theta\geqslant 0$ defines the $\theta$-iterates 
\begin{equation}\label{eq:thetaiterates}
 f^{[N]}\;:=\qquad\;\arg\!\!\!\!\!\!\!\!\!\!\!\!\min_{\!\!\!\!\!\!\!\!\!\!\!\!\! h\in\{f^{[0]}\}+\mathcal{K}_N(A,\mathfrak{R}_0)}\|A^{\theta/2}(h-P_\mathcal{S}h)\|\,,\qquad N\in\mathbb{N}
\end{equation}
with
\begin{eqnarray}
 \mathfrak{R}_N\!\!&=&\!\!A f^{[N]}-g\,,\qquad N\in\mathbb{N}_0\,, \\
 \mathcal{K}_N(A,\mathfrak{R}_0)\!\!&=&\!\!\mathrm{span}\{\mathfrak{R}_0,A\mathfrak{R}_0,\dots A^{N-1}\mathfrak{R}_0\}\,,\qquad N\in\mathbb{N}\,. \label{eq:KNAiteration}
\end{eqnarray}

In order to apply an arbitrary positive power of $A$ to $Af^{[0]}-g$, we require that both $g$ and $f^{[0]}$ be $A$-smooth vectors \cite[Sect.~X.6]{rs2}, meaning that they belong to the space
\begin{equation}\label{eq:gsmooth}
 %6f^{[0]}
 C^\infty(A)\;:=\;\bigcap_{N\in\mathbb{N}}\mathcal{D}(A^N)\,.
\end{equation}
In the applications where $A$ is a differential operator, $A$-smoothness is a regularity requirement.

In turn, $A$-smoothness of $g$ and $f^{[0]}$ implies $\mathcal{K}_N(A,\mathfrak{R}_0)\subset C^\infty(A)$, and obviously $P_\mathcal{S}h\in\mathcal{S}\subset C^\infty(A)$, whereas by interpolation $C^\infty(A)\subset\mathcal{D}(A^{\theta/2})$ for any $\theta\geqslant 0$. This guarantees that in the minimisation \eqref{eq:thetaiterates} one is allowed to apply $A^{\theta/2}$ to any vector $h-P_\mathcal{S}h$.

We have thus seen that under the assumptions
\begin{equation}\label{eq:assumptionforthetaiterates}
 g\;\in\;\ran A\cap C^\infty(A)\,,\qquad f^{[0]}\;\in\;C^\infty(A) %\cap \mathscr{C}_{A,g}(\theta)
\end{equation}
the corresponding $\theta$-iterates $f^{[N]}$ are unambiguously defined by \eqref{eq:thetaiterates}-\eqref{eq:KNAiteration} above for \emph{any} $\theta\geqslant 0$. If $A$ is bounded, \eqref{eq:assumptionforthetaiterates} simply reduces to $g\in\ran A$.
In fact, \eqref{eq:assumptionforthetaiterates} are \emph{minimal assumptions}, inescapable if one wants to give meaning to conjugate gradient iterates in the unbounded case.

Such iterates have three notable properties, whose proof is deferred to Section \ref{sec:technical}.

\begin{proposition}\label{prop:FNproperties}
 The $\theta$-iterates $f^{[N]}$ defined for a given $\theta\geqslant 0$ by means of \eqref{eq:thetaiterates}-\eqref{eq:KNAiteration} under the assumption \eqref{eq:assumptionforthetaiterates} satisfy
 \begin{eqnarray}
  f^{[N]}-P_\mathcal{S}f^{[N]}\!\!&\in&\!\!(\ker A)^\perp\qquad\qquad\quad\qquad\, \forall N\in\mathbb{N}_0\,, \label{eq:fNPSFN-kerperp}\\
  P_\mathcal{S} f^{[N]}\!\!&=&\!\!P_\mathcal{S} f^{[0]}\qquad\qquad\qquad\qquad\; \forall N\in\mathbb{N}\,, \label{eq:PFNPF0} \\
  f^{[N]}-P_\mathcal{S}f^{[N]}\!\!&=&\!\! p_N(A)(f^{[0]}-P_\mathcal{S}f^{[0]}) \qquad \forall N\in\mathbb{N}\,, \label{eq:iterativeNemi}
 \end{eqnarray}
where $p_N(\lambda)$ is for each $N$ a polynomial of degree at most $N$ and such that $p_N(0)=1$.
% 
%  \begin{equation}
%   f^{[N]}-P_\mathcal{S}f^{[N]}\;=\;p_N(A)(f^{[0]}-P_\mathcal{S}f^{[0]}) \qquad \forall N\in\mathbb{N}\,,
%  \end{equation}
% where $p_N(\lambda)$ is for each $N$ a polynomial of degree up to $N$ and such that $p(0)=1$, as well as the identity
%  \begin{equation}
%   P_\mathcal{S} f^{[N]}\;=\;P_\mathcal{S} f^{[0]}\qquad \forall N\in\mathbb{N}\,.
%  \end{equation}
%  
%   \begin{equation}
%   f^{[N]}-P_\mathcal{S}f^{[N]}\,\in\,(\ker A)^\perp\qquad \forall N\in\mathbb{N}_0\,.
%  \end{equation}
\end{proposition}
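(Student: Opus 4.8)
The plan is to extract, from the structure of the minimisation problem \eqref{eq:thetaiterates}, a polynomial representation of the shifted iterates and then verify the three asserted properties in sequence, the first two being essentially by-products of the affine-subspace structure and the last being the genuine content.

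First I would fix $N\in\mathbb{N}$ and write a generic competitor as $h=f^{[0]}+q(A)\mathfrak{R}_0$ where $q$ is a polynomial of degree up to $N-1$; this is exactly the parametrisation of the affine space $\{f^{[0]}\}+\mathcal{K}_N(A,\mathfrak{R}_0)$. Now pick any reference solution $f_\ast\in\mathcal{S}$, so $Af_\ast=g$ and $\mathfrak{R}_0=A(f^{[0]}-f_\ast)$. Then
\begin{equation}\label{eq:prop-rep}
 h-f_\ast\;=\;(f^{[0]}-f_\ast)+q(A)A(f^{[0]}-f_\ast)\;=\;p(A)(f^{[0]}-f_\ast)\,,\qquad p(\lambda):=1+\lambda\,q(\lambda)\,,
\end{equation}
so $p$ ranges over all polynomials of degree at most $N$ with $p(0)=1$, and conversely each such $p$ gives an admissible $h$. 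Crucially, \eqref{eq:prop-rep} shows $h-f_\ast$ is independent of the choice of $f_\ast\in\mathcal{S}$ — indeed any two elements of $\mathcal{S}$ differ by an element of $\ker A$, on which every $p(A)$ with $p(0)=1$ acts as the identity, so $p(A)(f^{[0]}-f_\ast)$ does not depend on $f_\ast$. I would then observe that, since $\mathcal{S}=f_\ast+\ker A$ is an affine translate of the closed subspace $\ker A$, the orthogonal projection is $P_\mathcal{S}x=f_\ast+P_{\ker A}(x-f_\ast)$, hence $x-P_\mathcal{S}x=P_{(\ker A)^\perp}(x-f_\ast)$ for every $x\in\cH$. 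Applying this with $x=h$ and using \eqref{eq:prop-rep}, together with the fact that $p(A)$ commutes with the spectral projection onto $(\ker A)^\perp=\overline{\ran A}$ and maps it into itself,
\begin{equation}\label{eq:prop-shift}
 h-P_\mathcal{S}h\;=\;P_{(\ker A)^\perp}\,p(A)(f^{[0]}-f_\ast)\;=\;p(A)\,P_{(\ker A)^\perp}(f^{[0]}-f_\ast)\;=\;p(A)(f^{[0]}-P_\mathcal{S}f^{[0]})\,.
\end{equation}

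With \eqref{eq:prop-rep}–\eqref{eq:prop-shift} in hand the three claims follow quickly. Property \eqref{eq:fNPSFN-kerperp} for $N\ge 1$ is immediate from \eqref{eq:prop-shift} since the right-hand side lies in $\ran P_{(\ker A)^\perp}=(\ker A)^\perp$; for $N=0$ it is the definition $f^{[0]}-P_\mathcal{S}f^{[0]}=P_{(\ker A)^\perp}(f^{[0]}-f_\ast)$. Next, the objective in \eqref{eq:thetaiterates} is $\|A^{\theta/2}(h-P_\mathcal{S}h)\|=\|A^{\theta/2}p(A)(f^{[0]}-P_\mathcal{S}f^{[0]})\|$, a functional of $p$ alone; let $p_N$ be a minimising polynomial (existence of the minimiser is assumed, as it underlies the definition \eqref{eq:thetaiterates}) and set $f^{[N]}:=f^{[0]}+(p_N(A)-1)/A\cdot\mathfrak{R}_0$, i.e. the $h$ corresponding to $p_N$ via \eqref{eq:prop-rep}. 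Then \eqref{eq:iterativeNemi} is exactly \eqref{eq:prop-shift} for this choice, with $\deg p_N\le N$ and $p_N(0)=1$. Finally, for \eqref{eq:PFNPF0}, apply the general identity $P_\mathcal{S}x=f_\ast+P_{\ker A}(x-f_\ast)$ to $x=f^{[N]}$: from \eqref{eq:prop-rep}, $f^{[N]}-f_\ast=p_N(A)(f^{[0]}-f_\ast)$, so $P_{\ker A}(f^{[N]}-f_\ast)=P_{\ker A}p_N(A)(f^{[0]}-f_\ast)=p_N(0)\,P_{\ker A}(f^{[0]}-f_\ast)=P_{\ker A}(f^{[0]}-f_\ast)$, whence $P_\mathcal{S}f^{[N]}=f_\ast+P_{\ker A}(f^{[0]}-f_\ast)=P_\mathcal{S}f^{[0]}$.

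The only delicate point is justifying that the affine parametrisation by polynomials $p$ with $p(0)=1$ is faithful and that all the spectral-calculus manipulations with $p(A)$ are legitimate despite $A$ being unbounded: here one uses precisely the standing assumption \eqref{eq:assumptionforthetaiterates}, which guarantees $f^{[0]}-f_\ast\in C^\infty(A)$ (as $f^{[0]},f_\ast\in C^\infty(A)$) so that $p(A)(f^{[0]}-f_\ast)$ is well-defined for every polynomial $p$, that the map $q\mapsto p=1+\lambda q$ from degree-$(N-1)$ to degree-$N$, $p(0)=1$, polynomials is a bijection, and that $A$, being self-adjoint, has its bounded spectral projection $P_{(\ker A)^\perp}$ commuting with every $p(A)$ on $C^\infty(A)$ via functional calculus. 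I expect this bookkeeping — rather than any single hard estimate — to be the main obstacle, and it is dispatched by the smoothness hypothesis together with the spectral theorem.
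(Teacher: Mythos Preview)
Your argument is correct and follows essentially the same route as the paper: parametrise competitors by polynomials $p$ with $p(0)=1$, then exploit the characterisation of $P_\mathcal{S}$ via orthogonality to $\ker A$ (you do this through the explicit formula $P_\mathcal{S}x=f_\ast+P_{\ker A}(x-f_\ast)$; the paper proves the equivalent Lemma~\ref{lem:orthlemma} and then argues more indirectly, first obtaining $f^{[N]}-P_\mathcal{S}f^{[0]}\in(\ker A)^\perp$ and deducing \eqref{eq:PFNPF0} by decomposition). One small slip worth fixing: the sentence asserting that $p(A)(f^{[0]}-f_\ast)$ ``does not depend on $f_\ast$'' is false as written---if $f_\ast\mapsto f_\ast+v$ with $v\in\ker A$, then $p(A)v=p(0)v=v$ and the expression shifts by $-v$---but you never actually use this claim downstream, so the proof is unaffected.
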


As, by \eqref{eq:PFNPF0}, all such $f^{[N]}$'s have the same projection onto the solution manifold $\mathcal{S}$, the approach of $f^{[N]}$ to $\mathcal{S}$ consists explicitly of a convergence $f^{[N]}\to P_\mathcal{S} f^{[0]}$. Let us now specify in which sense this convergence is to be monitored.

The underlying idea, as is clear in the typical applications where $A$ is a differential operator on a $L^2$-space, is that $\|f^{[N]}- P_\mathcal{S} f^{[0]}\|_{(A)}\to 0$ in some $A$-dependent Sobolev norm. For this to make sense, clearly one needs enough $A$-regularity on $f^{[N]}- P_\mathcal{S} f^{[0]}$, which eventually is guaranteed by the regularity initially assumed on $f^{[0]}$. Thus, the general indicator of convergence has the form $\|A^{\sigma/2}(f^{[N]}- P_\mathcal{S} f^{[0]})\|$, but an extra care is needed if one wants to control the convergence in the abstract analogue of a low-regularity, negative-Sobolev norm, which would amount to formally consider $\sigma<0$, for in general $A$ can have a kernel and hence is only invertible on its range.

Based on these considerations, and inspired by the analogous discussion in \cite{Nemirovskiy-Polyak-1985} for bounded $A$, let us introduce the class $\mathscr{C}_{A,g}(\theta)$ of vectors of $\cH$ defined for generic $\theta\in\mathbb{R}$ as
\begin{equation}\label{eq:defCtheta}
 \mathscr{C}_{A,g}(\theta)\;:=\;
 \begin{cases}
  \{x\in\cH\,|\,x-P_\mathcal{S}x\in\mathcal{D}(A^{\frac{\theta}{2}})\}\,, & \theta\geqslant 0\,, \\
  \{x\in\cH\,|\,x-P_\mathcal{S}x\in\ran (A^{-\frac{\theta}{2}})\}\,, & \theta <0\,.
 \end{cases}
\end{equation}
(The dependence of $\mathscr{C}_{A,g}(\theta)$ on $g$ is implicit through the solution manifold $\mathcal{S}$.)
Distinguishing the two cases in \eqref{eq:defCtheta} is needed whenever $A$ has a non-trivial kernel. If instead $A$ is injective, and so too is therefore $A^{-\frac{\theta}{2}}$ for $\theta<0$, then $A^{-\frac{\theta}{2}}$ is a bijection between the two dense subspaces $\mathcal{D}(A^{-\frac{\theta}{2}})=\ran(A^{\frac{\theta}{2}})$ and $\ran(A^{-\frac{\theta}{2}})=\mathcal{D}(A^{\frac{\theta}{2}})$ of $\cH$.

Related to the class $\mathscr{C}_{A,g}(\theta)$ we have two further useful notions. One, for fixed $\theta\in\mathbb{R}$ and $x\in \mathscr{C}_{A,g}(\theta)$, is the vector
\begin{equation}\label{eq:uthetadef}
 u_\theta(x)\;:=\;
 \begin{cases}
  A^{\frac{\theta}{2}}(x-P_\mathcal{S}x)\,, & \theta\geqslant 0\,, \\
  \textrm{the minimal norm solution $u$ to}\; A^{-\frac{\theta}{2}}u=x-P_\mathcal{S}x\,, & \theta <0\,.
 \end{cases}
\end{equation}
The other is the functional $\rho_\theta$ defined on the vectors $x\in \mathscr{C}_{A,g}(\theta)$ as
\begin{equation}\label{eq:defrhothetawithu}
 \rho_\theta(x)\;:=\;\|u_\theta(x)\|^2\,.
\end{equation}
Thus,
\begin{equation}\label{eq:rhothetaexiplicit}
 \rho_\theta(x)\;=\;
 \begin{cases} 
  \quad\;\|A^{\frac{\theta}{2}}(x-P_\mathcal{S}x)\|^2\,, &  \qquad\theta\geqslant 0\,, \\
 \;\Big\|\Big(A^{-\frac{\theta}{2}}\Big|_{\ran\big(A^{-\frac{\theta}{2}}\big)}\Big)^{\!-1}(x-P_\mathcal{S}x)\Big\|^2\,, & \qquad\theta <0\,,
 \end{cases}
\end{equation}
% In particular, $\rho_\theta(x)=\|A^{\frac{\theta}{2}}(x-P_\mathcal{S}x)\|^2$ for $\theta\geqslant 0$.
with an innocent abuse of notation in \eqref{eq:rhothetaexiplicit} when $\theta<0$, as the operator inverse is to be understood for a (self-adjoint, and positive-definite) operator on the Hilbert subspace $\overline{\ran A}$.

It is worth remarking that in the special case when $A$ is \emph{bounded}, the following interesting properties hold, whose proof is deferred to Section \ref{sec:technical}, which do not have a counterpart in the unbounded case except for the obvious identity $\mathscr{C}_{A,g}(0)=\cH$.

\begin{lemma}\label{lem:Abdd_Etheta}
 If $A$ (besides being self-adjoint and non-negative) is bounded, and if $g\in \ran A$, then:
 \begin{itemize}
  \item[(i)] $\mathscr{C}_{A,g}(\theta)=\cH$ whenever $\theta\geqslant 0 $;
  \item[(ii)] $\mathscr{C}_{A,g}(\theta)\subset \mathscr{C}_{A,g}(\theta') $ for $\theta\leqslant \theta'$;
  \item[(iii)] for $\theta\leqslant \theta'$ and $x\in \mathscr{C}_{A,g}(\theta)$ one has $u_{\theta'}(x)=A^{(\theta'-\theta)/2}u_{\theta}(x)$, whence also $\rho_{\theta'}(x)\:\leqslant\;\|A\|^{\theta'-\theta}\rho_{\theta}(x)$.
%    \begin{equation}
%   \begin{array}{c}
%    %\mathscr{C}_{A,g}(\theta)\subset \mathscr{C}_{A,g}(\theta') \\
%    x_{\theta'}(x)=A^{(\theta'-\theta)/2}x_{\theta}(x) \\
%    \rho_{\theta'}(x)\:\leqslant\;\|A\|^{\theta'-\theta}\rho_{\theta}(x)
%   \end{array}
%  \end{equation}
 \end{itemize}
\end{lemma}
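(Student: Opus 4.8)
The plan is to exploit the bounded functional calculus for $A$ on the finite spectral interval $[0,\|A\|]$, treating the three items in order. For (i), the point is that when $\theta \geqslant 0$ and $A$ is bounded, $A^{\theta/2}$ is an everywhere-defined bounded operator, so $\mathcal{D}(A^{\theta/2}) = \cH$; hence the condition $x - P_{\mathcal S}x \in \mathcal{D}(A^{\theta/2})$ in \eqref{eq:defCtheta} is vacuous and $\mathscr{C}_{A,g}(\theta) = \cH$. This is immediate and requires no real work beyond noting that $x - P_{\mathcal S}x$ is always a well-defined vector of $\cH$.

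For (ii), I would split into three cases according to the signs of $\theta \leqslant \theta'$. If $0 \leqslant \theta \leqslant \theta'$, then by (i) both classes equal $\cH$ and there is nothing to prove. If $\theta \leqslant 0 \leqslant \theta'$, then $\mathscr{C}_{A,g}(\theta') = \cH$ by (i) and the inclusion is trivial. The substantive case is $\theta \leqslant \theta' \leqslant 0$: here I must show that if $x - P_{\mathcal S}x \in \ran(A^{-\theta/2})$ then $x - P_{\mathcal S}x \in \ran(A^{-\theta'/2})$. Writing $x - P_{\mathcal S}x = A^{-\theta/2}u$ for some $u$, and noting $-\theta/2 \geqslant -\theta'/2 \geqslant 0$, one factors $A^{-\theta/2} = A^{-\theta'/2}\,A^{(\theta'-\theta)/2}$ (valid for bounded $A$ by the spectral theorem, since all exponents are non-negative), so $x - P_{\mathcal S}x = A^{-\theta'/2}\big(A^{(\theta'-\theta)/2}u\big) \in \ran(A^{-\theta'/2})$. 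The only mild subtlety is that these range statements should be read inside $\overline{\ran A}$, consistent with the abuse of notation flagged after \eqref{eq:rhothetaexiplicit}; since $x - P_{\mathcal S}x \in (\ker A)^\perp = \overline{\ran A}$ always (this uses that $P_{\mathcal S}x$ differs from $x$ by an element orthogonal to $\ker A$, which is the content of the projection onto the affine solution manifold), the factorization takes place entirely in that subspace.

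For (iii), again take $x \in \mathscr{C}_{A,g}(\theta)$ with $\theta \leqslant \theta'$ and verify $u_{\theta'}(x) = A^{(\theta'-\theta)/2}u_\theta(x)$ case by case, using the definition \eqref{eq:uthetadef}. When $0 \leqslant \theta \leqslant \theta'$: $u_{\theta'}(x) = A^{\theta'/2}(x - P_{\mathcal S}x) = A^{(\theta'-\theta)/2}A^{\theta/2}(x - P_{\mathcal S}x) = A^{(\theta'-\theta)/2}u_\theta(x)$, by the semigroup property of bounded fractional powers. When $\theta \leqslant 0 \leqslant \theta'$: $u_\theta(x)$ is the minimal-norm solution $v$ of $A^{-\theta/2}v = x - P_{\mathcal S}x$, i.e. $v \in \overline{\ran A}$, so $u_{\theta'}(x) = A^{\theta'/2}(x - P_{\mathcal S}x) = A^{\theta'/2}A^{-\theta/2}v = A^{(\theta'-\theta)/2}v = A^{(\theta'-\theta)/2}u_\theta(x)$. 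When $\theta \leqslant \theta' \leqslant 0$: with $v = u_\theta(x)$ the minimal-norm solution of $A^{-\theta/2}v = x - P_{\mathcal S}x$, set $w := A^{(\theta'-\theta)/2}v$; then $A^{-\theta'/2}w = A^{-\theta'/2}A^{(\theta'-\theta)/2}v = A^{-\theta/2}v = x - P_{\mathcal S}x$, so $w$ solves the defining equation for $u_{\theta'}(x)$, and since $v \in \overline{\ran A}$ forces $w \in \overline{\ran A}$, $w$ is in fact the minimal-norm solution; hence $u_{\theta'}(x) = w = A^{(\theta'-\theta)/2}u_\theta(x)$. The norm bound then follows from $\rho_{\theta'}(x) = \|u_{\theta'}(x)\|^2 = \|A^{(\theta'-\theta)/2}u_\theta(x)\|^2 \leqslant \|A^{(\theta'-\theta)/2}\|^2 \|u_\theta(x)\|^2 \leqslant \|A\|^{\theta'-\theta}\rho_\theta(x)$, using the operator-norm estimate $\|A^{s}\| \leqslant \|A\|^{s}$ for $s = (\theta'-\theta)/2 \geqslant 0$ coming from the spectral theorem.

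I expect the main obstacle to be purely bookkeeping rather than conceptual: one must consistently interpret all negative-power range and inverse statements on the Hilbert subspace $\overline{\ran A}$ — where $A$ restricts to an injective non-negative self-adjoint operator — and repeatedly invoke that every relevant vector $x - P_{\mathcal S}x$ lies in $(\ker A)^\perp$. The factorization identities $A^{s+t} = A^s A^t$ for non-negative real exponents, and the claim that the minimal-norm solution is the unique solution lying in $\overline{\ran A}$, are the two facts that must be stated carefully; both are standard consequences of the spectral theorem for bounded self-adjoint operators, but their combination across the sign cases of $\theta, \theta'$ is where care is needed. None of the steps requires an estimate deeper than $\|A^s\| = \|A\|^s$ (or $\leqslant$, if one prefers to avoid identifying the spectral radius with the norm), which is why I would keep the write-up short and organized strictly by the three sign regimes.
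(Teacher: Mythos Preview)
Your proposal is correct and follows essentially the same route as the paper's proof: part (i) via $\mathcal{D}(A^{\theta/2})=\cH$ for bounded $A$, part (ii) by the range inclusion $\ran(A^{-\theta/2})\subset\ran(A^{-\theta'/2})$ when $\theta\leqslant\theta'<0$, and part (iii) by the same three sign-case analysis using the semigroup identity for non-negative fractional powers. If anything, you are slightly more explicit than the paper in the case $\theta\leqslant\theta'<0$, where you justify that $w=A^{(\theta'-\theta)/2}u_\theta(x)$ is the \emph{minimal-norm} solution by noting $w\in\overline{\ran A}$; the paper leaves this step implicit.
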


Back to the general case where $A$ is possibly \emph{unbounded}, the goal is to evaluate certain $\rho_\sigma$-functionals along the sequence of the $f^{[N]}$'s. This may require an extra assumption on the initial guess $f^{[0]}$, as the following Lemma shows.

\begin{lemma}\label{lem:sigma-assumptions}
 Consider the $\theta$-iterates $f^{[N]}$ defined for a given $\theta\geqslant 0$ by means of \eqref{eq:thetaiterates}-\eqref{eq:KNAiteration} under the assumption \eqref{eq:assumptionforthetaiterates}. Then:
 \begin{itemize}
  \item[(i)] $f^{[N]}\in \mathscr{C}_{A,g}(\sigma)$ $\forall \sigma\geqslant 0$;
  \item[(ii)]  $f^{[N]}\in \mathscr{C}_{A,g}(\sigma)$ for any $\sigma<0$ such that, additionally, $f^{[0]}\in \mathscr{C}_{A,g}(\sigma)$, in which case
  \begin{equation}\label{eq:usigma-uzero}
   u_\sigma(f^{[N]})\;=\;p_N(A)\,u_\sigma(f^{[0]})\,,
  \end{equation}
  where $p_N(\lambda)$ is precisely the polynomial mentioned in Proposition \ref{prop:FNproperties}.
 \end{itemize}
\end{lemma}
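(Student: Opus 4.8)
The plan is to read off both statements from the polynomial representation \eqref{eq:iterativeNemi} of Proposition~\ref{prop:FNproperties} combined with the functional calculus of $A$; no analytic input is needed beyond the $A$-smoothness already built into \eqref{eq:assumptionforthetaiterates}. I would first fix $w:=f^{[0]}-P_\mathcal{S}f^{[0]}$ and record two facts about it: $w\in C^\infty(A)$, since $f^{[0]}\in C^\infty(A)$ by \eqref{eq:assumptionforthetaiterates} and $P_\mathcal{S}f^{[0]}\in\mathcal{S}\subset C^\infty(A)$; and $w\in(\ker A)^\perp$ by \eqref{eq:fNPSFN-kerperp} with $N=0$. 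I would also recall that $C^\infty(A)$ is invariant under $A$, hence under every polynomial in $A$, and that $C^\infty(A)\subset\mathcal{D}(A^{\sigma/2})$ for all $\sigma\geqslant0$ by interpolation. Part (i) is then immediate: by \eqref{eq:iterativeNemi} one has $f^{[N]}-P_\mathcal{S}f^{[N]}=p_N(A)w\in C^\infty(A)\subset\mathcal{D}(A^{\sigma/2})$, which by \eqref{eq:defCtheta} is exactly $f^{[N]}\in\mathscr{C}_{A,g}(\sigma)$ for every $\sigma\geqslant0$.

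For part (ii) fix $\sigma<0$ and set $s:=-\sigma/2>0$. The hypothesis $f^{[0]}\in\mathscr{C}_{A,g}(\sigma)$ means $w\in\ran(A^s)$; let $u:=u_\sigma(f^{[0]})$ be, as in \eqref{eq:uthetadef}, the minimal-norm solution of $A^su=w$, so $u\in(\ker A^s)^\perp=(\ker A)^\perp$ (using that $\lambda\mapsto\lambda^s$ vanishes only at $\lambda=0$ on $[0,\infty)$, whence $\ker A^s=\ker A$). The claim I would establish is that $p_N(A)u$ is the minimal-norm solution of $A^sv=f^{[N]}-P_\mathcal{S}f^{[N]}$: this gives at once $f^{[N]}\in\mathscr{C}_{A,g}(\sigma)$ and, since $u_\sigma(f^{[N]})$ is by definition that minimal-norm solution, also the identity \eqref{eq:usigma-uzero}.

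The step I expect to be the main obstacle is showing that the range membership survives the application of the polynomial, i.e.\ that $p_N(A)u\in\mathcal{D}(A^s)$; a priori the variational construction of $f^{[N]}$ only tells us $f^{[N]}-P_\mathcal{S}f^{[N]}\in\cH$. This is exactly where the $A$-smoothness hypothesis \eqref{eq:assumptionforthetaiterates} is indispensable. Denoting by $\{E_\lambda\}$ the spectral family of $A$, using $|p_N(\lambda)|^2\leqslant c_N(1+\lambda^{2N})$ on $[0,\infty)$ and the identity $\ud\|E_\lambda w\|^2=\lambda^{2s}\,\ud\|E_\lambda u\|^2$ (valid since $w=A^su$ with $u\in\mathcal{D}(A^s)$), one obtains
\begin{equation*}
 \int_{[0,\infty)}\lambda^{2s}|p_N(\lambda)|^2\,\ud\|E_\lambda u\|^2\;\leqslant\;c_N\Big(\int_{[0,\infty)}\lambda^{2s}\,\ud\|E_\lambda u\|^2+\int_{[0,\infty)}\lambda^{2N}\,\ud\|E_\lambda w\|^2\Big)\;=\;c_N\big(\|w\|^2+\|A^Nw\|^2\big)\,,
\end{equation*}
finite precisely because $w\in C^\infty(A)\subset\mathcal{D}(A^N)$; an entirely analogous split at $\lambda=1$ gives $u\in\mathcal{D}(A^N)$, so that $p_N(A)u$ is meaningful in the first place. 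Hence $p_N(A)u\in\mathcal{D}(A^s)$ and, by multiplicativity of the functional calculus, $A^sp_N(A)u=p_N(A)A^su=p_N(A)w=f^{[N]}-P_\mathcal{S}f^{[N]}$. Finally $p_N(A)u\in(\ker A)^\perp$, because $(\ker A)^\perp$ reduces $A$ and is therefore invariant under $p_N(A)$ (equivalently $E_{\{0\}}p_N(A)u=p_N(A)E_{\{0\}}u=0$, as $u\perp\ker A$); and the minimal-norm solution of $A^sv=f^{[N]}-P_\mathcal{S}f^{[N]}$ is the unique solution lying in $(\ker A^s)^\perp=(\ker A)^\perp$. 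Therefore $u_\sigma(f^{[N]})=p_N(A)u=p_N(A)\,u_\sigma(f^{[0]})$, which is \eqref{eq:usigma-uzero} and completes the plan.
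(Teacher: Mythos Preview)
Your proof is correct and follows the same route as the paper's: both parts are read directly from \eqref{eq:iterativeNemi}, and for (ii) both write $f^{[N]}-P_\mathcal{S}f^{[N]}=p_N(A)\,A^{-\sigma/2}u_\sigma(f^{[0]})$ and conclude. You are in fact more careful than the paper, which simply asserts $u_\sigma(f^{[N]})=p_N(A)\,u_\sigma(f^{[0]})$ ``by \eqref{eq:uthetadef}'' without explicitly verifying either the domain membership $p_N(A)u\in\mathcal{D}(A^{-\sigma/2})$ or the minimal-norm property; your spectral-integral estimate and reducing-subspace argument fill those gaps cleanly.
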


Lemma \ref{lem:sigma-assumptions} is a direct consequence of \eqref{eq:iterativeNemi} in Proposition \ref{prop:FNproperties} above: for completeness we include its proof in Section \ref{sec:technical}.

It then makes sense to control the convergence $f^{[N]}\to \mathcal{P}_Sf^{[0]}$ in the $\rho_\sigma$-sense, for $\sigma$ positive or negative, with suitable assumptions on $f^{[0]}$. Explicitly,
\begin{equation}\label{eq:rhoindicator}
 \begin{split}
 \rho_\sigma(f^{[N]})\;&=\;\|u_\sigma(f^{[N]})\|^2 \\
 &=\;
 \begin{cases} 
  \quad\;\|A^{\frac{\sigma}{2}}(f^{[N]}-P_\mathcal{S}f^{[0]})\|^2\,, &  \qquad\sigma\geqslant 0\,, \\
 \;\Big\|\Big(A^{-\frac{\sigma}{2}}\Big|_{\ran\big(A^{-\frac{\sigma}{2}}\big)}\Big)^{\!-1}(f^{[N]}-P_\mathcal{S}f^{[0]})\Big\|^2\,, & \qquad\sigma <0\,,
 \end{cases}
 \end{split}
\end{equation}
having used \eqref{eq:PFNPF0}.
% 
% , we can now re-write
% \begin{equation}
%  u_\sigma(f^{[N]})\;:=\;
%  \begin{cases}
%   A^{\frac{\sigma}{2}}(f^{[N]}-P_\mathcal{S}f^{[0]})\,, & \sigma\geqslant 0 \\
%   \textrm{the minimal norm sol.~to}\; A^{-\frac{\sigma}{2}}u=f^{[N]}-P_\mathcal{S}f^{[0]}\,, & \sigma <0
%  \end{cases}
% \end{equation}
% and
The most typical and meaningful choices in the applications are
\begin{equation}\label{eq:xi123}
 \begin{split}
  \rho_0(f^{[N]})\;&=\;\big\|f^{[N]}-\mathcal{P}_Sf^{[0]}\big\|^2\,, \\
  \rho_1(f^{[N]})\;&=\;\big\langle f^{[N]}-\mathcal{P}_Sf^{[0]},A\,(f^{[N]}-\mathcal{P}_Sf^{[0]})\big\rangle\,, \\
  \rho_2(f^{[N]})\;&=\;\big\|A(f^{[N]}-\mathcal{P}_Sf^{[0]})\big\|^2\,,
 \end{split}
\end{equation}
that is, respectively, the norm of the error, the so-called `energy' (semi-)norm, and the norm of the residual.

% Thus, $f^{[0]}$ is required to belong to some class $\mathscr{C}_{A,g}(\theta)$ for \emph{two} values of $\theta$. One, say, $\theta=\theta_1\geqslant 0$, parametrises the protocol \eqref{eq:CG-theta_generic} defining the $\theta_1$-iterates $f^{[N]}$. The second, say, $\theta=\theta_2\in\mathbb{R}$, labels the indicator of convergence \eqref{eq:rhoindicator}. Intuitively speaking, for any plausible convergence result the vectors $f^{[N]}-\mathcal{P}_Sf^{[0]}$ must be constructed with at least the same amount of regularity needed for the evaluation of the functional $\rho_\theta$, and hence $\theta_1\geqslant\theta_2$ is to be expected. A priori the endpoint case $\theta_1=\theta_2$ is admissible too, but in our result we have to choose $\theta_1>\theta_2$, i.e., we have to pay the price of an extra amount of regularity in the construction of $f^{[N]}-\mathcal{P}_Sf^{[0]}$ as compared to the actual regularity necessary for the control of the $\rho_\theta$-convergence, of course with the freedom of making $\theta_1$ arbitrarily close to $\theta_2$.

The preparation made so far for our forthcoming main result (Theorem \ref{thm:main} below) does not account yet for the necessity of one further, restrictive assumption on the datum $g$ and the initial guess $f^{[0]}$ of the algorithm, a restriction needed once again to deal with the possible unboundedness of the operator $A$ (the bounded case being controllable for arbitrary $g\in\mathrm{ran} A$ and $f^{[0]}\in\cH$). The actual need of a special, inevitable choice of $g$ and $f^{[0]}$ will be fully clear in the course of the proof; for the time being, let us outline here a short heuristic reasoning.

From the expression of the indicator of convergence $\rho_\sigma(f^{[N]})$, for concreteness the case $\sigma\geqslant 0$ in \eqref{eq:rhoindicator}, and from the iterates properties \eqref{eq:PFNPF0}-\eqref{eq:iterativeNemi} announced in Proposition \ref{prop:FNproperties}, it is easy to realise, as we shall argue in the next Section, that the actual quantity to control along the limit $N\to\infty$ is an integral of the form
\[
 \int_{[0,+\infty)}\big|\lambda^{\frac{\sigma}{2}}p_N(\lambda)\big|^2\,\ud\langle f^{[0]}-P_{\mathcal{S}}f^{[0]},E^A(\lambda)( f^{[0]}-P_{\mathcal{S}}f^{[0]})\rangle\,,
\]
for suitable polynomials $p_N$ determined by the minimisation \eqref{eq:thetaiterates}, where the measure is the scalar spectral measure associated to the self-adjoint operator $A$, and hence (by positivity of $A$) the integration runs over $[0,+\infty)$. For bounded $A$'s the integration is actually restricted within the spectrum of $A$, hence within a compact interval of the non-negative half line, and this naturally provides a kind of \emph{uniformity} in $N$ that is crucial in controlling the vanishing of the above integral in the limit. When instead $A$ (and hence the integration domain) is unbounded, some other source of uniformity in $N$ must be implemented, which eventually is to be some kind of uniformity of the measure $\lambda^{2N}\ud\langle f^{[0]}-P_{\mathcal{S}}f^{[0]},E^A(\lambda)( f^{[0]}-P_{\mathcal{S}}f^{[0]})\rangle$, in other words, a suitable \emph{control of the growth in $N$ of the norm} $\|A^N(f^{[0]}-P_{\mathcal{S}}f^{[0]})\|$. In turn, this requires a control in $N$ of $\|A^N f^{[0]}\|$ and $\|A^N g\|$.

It is with the above heuristics in mind that we recall the following classes of vectors \cite[Definition 7.1]{schmu_unbdd_sa}: the \emph{analytic} vectors for $A$ are the elements of the \emph{subspace}
\begin{equation}\label{def:analytic-vectors}
 \mathcal{D}^a(A)\;:=\;\left\{ g\in C^\infty(A)=\bigcap_{n\in\mathbb{N}}\mathcal{D}(A^n)\left|\!
 \begin{array}{c}
  \|A^ng\|\leqslant C^n_g n! \\
  \textrm{ for any $n\in\mathbb{N}$} \\
  \textrm{and some $C_g>0$}
 \end{array}
 \!\!\right.\right\},
\end{equation}
and the \emph{quasi-analytic} vectors for $A$ are the elements of the \emph{set}
 \begin{equation}\label{def:quasianalytic-vectors}
   \mathcal{D}^{qa}(A)\;:=\;\Big\{ g\in C^\infty(A)\,\Big|\,\sum_{n\in\mathbb{N}}\|A^n g\|^{-\frac{1}{n}}=+\infty\Big\}\,.
 \end{equation}
Clearly $\mathcal{D}^a(A)\subset\mathcal{D}^{qa}(A)\subset C^\infty(A)$, and the self-adjointness of $A$ ensures that the subspace of its analytic vectors is \emph{dense} in $\cH$ (this is the celebrated Nelson theorem: see, e.g., \cite[Theorem 7.16]{schmu_unbdd_sa}). Obviously when $A$ is bounded the whole $\cH$ is made of analytic vectors for $A$.

We are finally in the condition to formulate our main result.

\begin{theorem}\label{thm:main}
Let $A$ be a non-negative self-adjoint operator on the Hilbert space $\cH$. Let
 \begin{equation}\label{eq:maincond1}
 g\;\in\;\mathcal{D}^a(A)\cap \ran A\,.
\end{equation}
Consider the conjugate gradient algorithm associated with $A$ and $g$  where the initial guess vector $f^{[0]}$ satisfies
\begin{equation}\label{eq:maincond2}
 f^{[0]}\;\in\;\mathcal{D}^a(A)\cap \mathscr{C}_{A,g}(\sigma^*)\,,\qquad \sigma^*=\min\{\sigma,0\}
\end{equation}
for a given $\sigma\in\mathbb{R}$, and where the iterates $f^{[N]}$, $N\in\mathbb{N}$, are constructed via \eqref{eq:thetaiterates} with parameter $\theta=\xi\geqslant 0$ under the condition $\sigma\leqslant\xi$. Then
\begin{equation}\label{eq:main-cg-convergence}
 \lim_{N\to\infty}\rho_\sigma(f^{[N]})\;=\;0\,.
\end{equation}
% \[
%  \lim_{N\to\infty}\rho_\sigma(f^{[N]})\;=\;\lim_{N\to\infty}\|A^{\sigma/2}(f^{[N]}-P_{\mathcal{S}}f^{[0]})\|^2\;=\;0\,,
% \]
% where $P_\mathcal{S}$ is the projection map onto the manifold $\mathcal{S}$ of solutions $f$ to the inverse problem $Af=g$.
\end{theorem}

As a corollary of the proof that we shall discuss, we also have:

\begin{corollary}\label{cor:main}
  The same conclusion of Theorem \ref{thm:main} follows also when the assumptions on $g$ and $f^{[0]}$ are replaced by
\begin{equation}\label{eq:maincond3}
 \begin{split}
   g\;&\in\;C^\infty(A)\cap \ran A\,, \\
   f^{[0]}\;&\in\;C^\infty(A)\cap \mathscr{C}_{A,g}(\sigma^*)\,, \\
   f^{[0]}-P_\mathcal{S}f^{[0]}&\in\;\mathcal{D}^{qa}(A)\,,
 \end{split}
\end{equation}
or by
% In particular, the same conclusion of Theorem \ref{thm:main} follows also when
\begin{equation}\label{eq:maincond4}
 \begin{split}
   g\;&\in\;\mathcal{D}^{qa}(A)\cap \ran A\,, \\
   f^{[0]}\;&=\;0\,.
 \end{split}
\end{equation}
\end{corollary}

In other words, Theorem \ref{thm:main} states that the convergence holds at a given `$A$-regularity level' $\sigma$ for $\xi$-iterates built with \emph{equal or higher} `$A$-regularity level' $\xi\geqslant\sigma$, and with an initial guess $f^{[0]}$ that is $A$-analytic if $\sigma\geqslant 0$, and additionally belongs to the class $\mathscr{C}_{A,g}(\sigma)$ if $\sigma<0$.

In particular, with no extra assumption on $f^{[0]}$ but its $A$-analyticity, the $\xi$-iterates with $\xi\geqslant 0$ automatically converge in the sense of the error ($\sigma=0$, see \eqref{eq:xi123} above), the $\xi$-iterates with $\xi\geqslant 1$ automatically converge in the sense of the error and of the energy norm ($\sigma=1$), the $\xi$-iterates with $\xi\geqslant 2$ automatically converge in the sense of the error, energy norm, and residual ($\sigma=2$).

\begin{remark}\label{rem:finite_steps}
 If, for a finite $N$, $\rho_\sigma(f^{[N]})=0$, then the very iterate $f^{[N]}$ \emph{is} a solution to the linear problem $Af=g$, and one says that the algorithm `has come to convergence' in a finite number ($N$) of steps. Indeed, $\rho_\sigma(f^{[N]})=0$ is the same as $A^{\frac{\sigma}{2}}(f^{[N]}-P_\mathcal{S}f^{[0]})=0$ if $\sigma\geqslant 0$, i.e., $f^{[N]}-P_\mathcal{S}f^{[0]}\in\ker A^{\frac{\sigma}{2}}=\ker A$; this, combined with $f^{[N]}-P_\mathcal{S}f^{[0]}\in(\ker A)^\perp$ (see \eqref{eq:fNPSFN-kerperp}-\eqref{eq:PFNPF0} above), implies that $f^{[N]}=P_\mathcal{S}f^{[0]}\in\mathcal{S}$. On the other hand, $\rho_\sigma(f^{[N]})=0$ is the same as $u_\sigma(f^{[N]})=0$ with $A^{-\frac{\sigma}{2}}u_\sigma(f^{[N]})=f^{[N]}-P_\mathcal{S}f^{[0]}$ if $\sigma < 0$, whence again $f^{[N]}=P_\mathcal{S}f^{[0]}\in\mathcal{S}$. 
\end{remark}

\begin{remark}\label{rem:Abdd}~
\begin{itemize}
 \item[(i)] In the special scenario where $A$ is (everywhere-defined and) bounded, $A$-analyticity is automatically guaranteed, so one only needs to assume that $g\in\ran A$ and $f^{[0]}\in \mathscr{C}_{A,g}(\sigma^*)$ for some $\sigma\in\mathbb{R}$ ($\sigma^*=\min\{\sigma,0\}$) in order for the convergence of the $\xi$-iterates ($\xi\geqslant\sigma$) to hold in the sense $\rho_\sigma(f^{[N]})\to 0$. Then, owing to Lemma \ref{lem:Abdd_Etheta}, one automatically has also $\rho_{\sigma'}(f^{[N]})\to 0$ for any $\sigma'\geqslant \sigma$. This is precisely the form of the convergence result originally established by Nemirovskiy and Polyak \cite{Nemirovskiy-Polyak-1985}. 
 \item[(ii)] Thus, in the bounded-case scenario, if $\sigma$ is the minimum level of convergence chosen, then not only are the $\xi$-iterates with $\xi\geqslant\sigma$ proved to $\rho_\sigma$-converge, but in addition the \emph{same} $\xi$-iterates also $\rho_{\sigma'}$-converge at any other level $\sigma'\geqslant \sigma$, with no upper bound on $\sigma'$. 
 In particular, it is shown in \cite{Nemirovskiy-Polyak-1985} that
 \begin{equation}\label{eq:Nem-conv-rate}
  \rho_{\sigma'}(f^{[N]})\;\leqslant\;C(\|A\|_{\mathrm{op}},\xi-\sigma)\,(2N+1)^{-2(\sigma'-\sigma)}\,\rho_{\sigma}(f^{[0]})\,,\quad \sigma<\sigma'\leqslant\xi\,,
 \end{equation}
 for some constant $C(\|A\|_{\mathrm{op}},\xi-\sigma)>0$, thus providing an explicit \emph{rate of convergence} of the $\xi$-iterates in a generic $\rho_{\sigma'}$-sense such that $\sigma'\in(\sigma,\xi]$.
 
 %In particular, the $1$-iterates, namely the `classical' conjugate gradient iterates, converge in the sense of the energy semi-norm as well as in the sense of the residual (as is clear by choosing $\sigma=0$).
 \item[(iii)] In the general unbounded-case scenario, instead, the $\rho_\sigma$-convergence guaranteed by Theorem \ref{thm:main} is not exportable to $\rho_{\sigma'}$-convergence with $\sigma'>\sigma$.
\end{itemize}
\end{remark}

\begin{remark}
 When, in the unbounded case, $A$ has an everywhere-defined bounded inverse, one has $\mathscr{C}_{A,g}(\sigma)=\cH$ for any $\sigma\leqslant 0$. Therefore, Theorem \ref{thm:main} guarantees the $\rho_\sigma$-convergence of the $\xi$-iterates for any $\sigma\leqslant 0$, provided that $g$ and $f^{[0]}$ are $A$-analytic. Such `weaker' convergence can be still informative in many contexts. For instance, choosing
\[
 \begin{split}
  \cH&=L^2(\mathbb{R}^d) \\
  A&=-\Delta+\mathbbm{1}\quad\textrm{with}\quad \mathcal{D}(A)=H^2(\mathbb{R}^d)\qquad \textrm{($\ran A=\cH$)} \\
  g,f^{[0]}&\in C^\infty(\mathbb{R}^d)\,,
 \end{split}
\]
we see that the $\theta$-iterates defined by \eqref{eq:CG-theta_generic} with the above data converge to the unique solution $f$ to the inverse problem $-\Delta f +f =g$ in any negative Sobolev space $H^{\sigma}(\mathbb{R}^d)$, $\sigma < 0$; in particular, $f^{[N]}(x)\to f(x)$ point-wise almost everywhere.
\end{remark}

\begin{remark}\label{rem:analytic-quasian}~
\begin{itemize}
 \item[(i)] Assumptions \eqref{eq:maincond1}-\eqref{eq:maincond2} of Theorem \ref{thm:main}, as well as assumptions \eqref{eq:maincond3} of Corollary \ref{cor:main}, are needed to cover the case of our primary interest, the \emph{unboundedness} of $A$. 
 \item[(ii)] Such restrictions still allow the admissible $g$ and $f^{[0]}$ to run over a \emph{dense} of $\cH$. 
 \item[(iii)] Assumptions \eqref{eq:maincond3} are slightly less restrictive than \eqref{eq:maincond1}-\eqref{eq:maincond2}. Indeed, from \eqref{eq:maincond1}-\eqref{eq:maincond2}, since $g$ is analytic, so is $P_{\mathcal{S}}f^{[0]}$ (a fact that we shall prove in Lemma \ref{lem:fpPf-analitic}), and by linearity $f^{[0]}-P_\mathcal{S}f^{[0]}$ is analytic too, whence \eqref{eq:maincond3}. 
 \item[(iv)] Albeit more general, assumptions \eqref{eq:maincond3} have the apparent drawback of being formulated in terms of a vector, $P_\mathcal{S}f^{[0]}$, that is unknown prior to actually solving the inverse problem.  We singled out \eqref{eq:maincond3} because, as is going to emerge in the forthcoming discussion, it is precisely the quasi-analyticity of $f^{[0]}-P_\mathcal{S}f^{[0]}$ (together with the inevitable operational assumption $g\in C^\infty(A)\cap \ran A$) that makes our proof work. In fact, quasi-analyticity of $f^{[0]}-P_\mathcal{S}f^{[0]}$ provides a control on the $N$-growth of $\|A^N(f^{[0]}-P_{\mathcal{S}}f^{[0]})\|$, the quantity we heuristically discussed prior to stating Theorem \ref{thm:main}. 
 \item[(v)] Assumptions \eqref{eq:maincond4} are in fact a special case of \eqref{eq:maincond3}, as will be clear from Lemma \ref{lem:fpPf-analitic}(i). We singled them out to connect our result with the frequent occurrence, in conjugate gradient methods, where the initial guess $f^{[0]}$ is just the zero function.
\end{itemize}
\end{remark}

\begin{remark}\label{rem:generic-regime}
 As a follow-up of Remark \ref{rem:analytic-quasian}(iii): although the quasi-analyticity of $f^{[0]}-P_\mathcal{S}f^{[0]}$ is only a \emph{sufficient} condition, some possibly weaker assumption of that sort, namely some kind of control of the growth in $N$ of $\|A^N(f^{[0]}-P_{\mathcal{S}}f^{[0]})\|$, is surely needed for the conjugate gradient convergence \eqref{eq:main-cg-convergence}. That in the regime $g,f^{[0]}\in C^\infty(A)$ the vanishing $\rho_\sigma(f^{[N]})\to 0$ is \emph{not} guaranteed, is going to be explained in Proposition \ref{prop:generic-regime}, when the technical details of Theorem \ref{thm:main} will be clear.  
\end{remark}

\section{Intermediate technical facts}\label{sec:technical}

We discuss in this Section an amount of technical properties that are needed for the proof of the main Theorem \ref{thm:main}.

For convenience, let us set for each $N\in\mathbb{N}$
\begin{equation}
 \begin{split}
  \mathbb{P}([0,+\infty))\;&:=\;\{\textrm{real-valued polynomials $p(\lambda)$, $\lambda\in[0,+\infty)$} \}  \\
  \mathbb{P}_N\;&:=\;\{\;p\in \mathbb{P}([0,+\infty))\,|\,\deg p\leqslant N \, \} \\
  \mathbb{P}_N^{(1)}\;&:=\;\{\;p\in \mathbb{P}_N\,|\,p(0)=1\,\}\,.
 \end{split}
\end{equation}

Let us start with the proof of those properties stated in Section \ref{sec:setup_mainresults}. The proof of Proposition \ref{prop:FNproperties} requires the following elementary fact.

\begin{lemma}\label{lem:orthlemma}
 Let $z\in\cH$. For a point $y\in\mathcal{S}$ these conditions are equivalent:
 \begin{itemize}
  \item[(i)] $y=P_Sz$\,,
  \item[(ii)] $z-y\in(\ker A)^\perp$\,.
 \end{itemize}
\end{lemma}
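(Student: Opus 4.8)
The statement to prove is Lemma~\ref{lem:orthlemma}: for $z\in\cH$ and $y\in\mathcal S$, one has $y=P_{\mathcal S}z$ if and only if $z-y\in(\ker A)^\perp$.

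The plan is to exploit the affine structure of the solution manifold. First I would observe that $\mathcal S$ is a translate of $\ker A$: if $y_0\in\mathcal S$ is any fixed solution, then $\mathcal S=y_0+\ker A$, because $Af=g=Ay_0$ is equivalent to $f-y_0\in\ker A$ (and one must note $\ker A\subset\mathcal D(A)$ so all these vectors lie in $\mathcal D(A)$, keeping $\mathcal S$ well-defined). Thus $\mathcal S$ is a closed affine subspace whose direction space is the closed subspace $\ker A$ (closed since $A$ is self-adjoint, hence closed). This reduces the lemma to the standard characterisation of the metric projection onto a closed affine subspace.

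Next I would invoke the classical Hilbert-space projection theorem in affine form: for a closed affine subspace $M=y_0+V$ with $V$ a closed linear subspace, the point $y\in M$ minimising $\|z-y\|$ is the unique $y\in M$ with $z-y\perp V$. One direction: if $y=P_{\mathcal S}z$, then for any $v\in\ker A$ and any $t\in\R$ (or $\C$), $y+tv\in\mathcal S$, so $t\mapsto\|z-y-tv\|^2=\|z-y\|^2-2t\,\re\langle v,z-y\rangle+|t|^2\|v\|^2$ is minimised at $t=0$, forcing $\re\langle v,z-y\rangle=0$ for all $v$; replacing $v$ by $\ii v$ in the complex case gives $\langle v,z-y\rangle=0$, i.e.\ $z-y\in(\ker A)^\perp$. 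Conversely, if $z-y\in(\ker A)^\perp$, then for any $w\in\mathcal S$ we have $w-y\in\ker A$, so $\langle z-y,w-y\rangle=0$ and $\|z-w\|^2=\|z-y\|^2+\|y-w\|^2\geqslant\|z-y\|^2$, with equality only when $w=y$; hence $y=P_{\mathcal S}z$.

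I do not expect any serious obstacle here: this is essentially the affine projection theorem, and the only points requiring a word of care are (a) confirming $\ker A$ is closed (immediate from self-adjointness) so that the decomposition $\cH=\ker A\oplus(\ker A)^\perp$ and the projection are legitimate, and (b) confirming $\mathcal S$ is non-empty (guaranteed by the standing assumption $g\in\ran A$) so that $P_{\mathcal S}z$ makes sense. If anything is mildly delicate it is just bookkeeping with the real-versus-complex inner product in the variational argument, which is routine.
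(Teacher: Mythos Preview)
Your proposal is correct and follows essentially the same approach as the paper: both identify $\mathcal S$ as the affine subspace $y_0+\ker A$, use the Pythagorean identity $\|z-w\|^2=\|z-y\|^2+\|y-w\|^2$ for (ii)$\Rightarrow$(i), and analyse the quadratic $t\mapsto\|z-y-tv\|^2$ for (i)$\Rightarrow$(ii). The only cosmetic difference is that the paper phrases (i)$\Rightarrow$(ii) as a contradiction argument while you argue directly, and you are slightly more explicit about the complex case; neither point is substantive.
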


\begin{proof}
 By linearity of $A$, $\mathcal{S}=\{y\}+\ker A$. If $z-y\in(\ker A)^\perp$, then for any $x\in\ker A$, and hence for a generic point $y+x\in\mathcal{S}$, one has 
 \[
  \|z-(y+x)\|^2\;=\;\|z-y\|^2+\|x\|^2\;\geqslant\;\|z-y\|^2\,,
 \]
therefore $y$ is necessarily the closest to $z$ among all points in $\mathcal{S}$, i.e., $y=P_Sz$. This proves that (ii) $\Rightarrow$ (i). Conversely, if $y=P_Sz$, and if by contradiction $z-y$ does not belong to $(\ker A)^\perp$, then $\langle x_0,z-y\rangle>0$ for some $x_0\in\ker A$. In this case, let us consider the polynomial
\[
 p(t)\;:=\;\|z-y-tx_0\|^2\;=\;\|x_0\|^2 t^2-2\, \langle x_0,z-y\rangle\,t+\|z-y\|^2\,.
\]
Clearly, $t=0$ is not a point of minimum for $p(t)$, as for $t>0$ and small enough one has $p(t)\leqslant p(0)$. This shows that there are points $y+tx_0\in\mathcal{S}$ for which $\|z-(y+tx_0)\|\leqslant \|z-y\|$, thus contradicting the assumption that $y$ is the closest to $z$ among all points in $\mathcal{S}$. Then necessarily $z-y\in(\ker A)^\perp$, which proves that (i) $\Rightarrow$ (ii).
\end{proof}

\begin{proof}[Proof of Proposition \ref{prop:FNproperties}]
  In the minimisation \eqref{eq:thetaiterates}
 \[
  h-f^{[0]}\;=\;q_{N-1}(A)(Af^{[0]}-g)\;=\;q_{N-1}(A)\,A\,(f^{[0]}-P_\mathcal{S}f^{[0]})
 \]
 for some polynomial $q_{N-1}\in \mathbb{P}_{N-1}$, whence also
 \[
  h-P_\mathcal{S}f^{[0]}\;=\;q_{N-1}(A)\,A\,(f^{[0]}-P_\mathcal{S}f^{[0]})+(f^{[0]}-P_\mathcal{S}f^{[0]})\,.
 \]
 This implies, upon setting $p_N(\lambda):=\lambda\, q_{N-1}(\lambda)+1$, that
  \begin{equation*}\tag{*}
  f^{[N]}-P_\mathcal{S}f^{[0]}\;=\;p_N(A)(f^{[0]}-P_\mathcal{S}f^{[0]}) \qquad \forall N\in\mathbb{N}\,,
 \end{equation*}
 where $p_N\in\mathbb{P}_N^{(1)}$.

 Moreover, $f^{[N]}-P_S f^{[N]}\in(\ker A)^\perp$, as a consequence of Lemma \ref{lem:orthlemma} applied to the choice $z=f^{[N]}$ and $y=P_S f^{[N]}$. With an analogous argument, also $f^{[0]}-P_S f^{[0]}\in(\ker A)^\perp$. Thus, \eqref{eq:fNPSFN-kerperp} is proved.

 Owing to \eqref{eq:assumptionforthetaiterates} and \eqref{eq:fNPSFN-kerperp}, $f^{[0]}-P_S f^{[0]}\in (\ker A)^\perp\cap C^\infty(A)$. Now, $(\ker A)^\perp\cap C^\infty(A)$ is invariant under the action of polynomials of $A$, and therefore owing to (*) we deduce that $f^{[N]}-P_\mathcal{S}f^{[0]}\in(\ker A)^\perp$.

 Next, let us split
 \[
  P_\mathcal{S} f^{[N]}-P_\mathcal{S} f^{[0]}\;=\;(f^{[N]}-P_\mathcal{S}f^{[0]})-(f^{[N]}-P_\mathcal{S} f^{[N]})\,.
 \]
 Obviously, $P_\mathcal{S} f^{[N]}-P_\mathcal{S} f^{[0]}\in\ker A$. But in the right-hand side, as just shown, both $f^{[N]}-P_\mathcal{S}f^{[0]}\in(\ker A)^\perp$ and $f^{[N]}-P_\mathcal{S} f^{[N]}\in(\ker A)^\perp$. So $P_\mathcal{S} f^{[N]}-P_\mathcal{S} f^{[0]}\in(\ker A)^\perp$. The conclusion is necessarily $P_\mathcal{S} f^{[N]}-P_\mathcal{S} f^{[0]}=0$.
  
 This establishes \eqref{eq:PFNPF0}, by means of which formula (*) above takes also the form of \eqref{eq:iterativeNemi}.
\end{proof}

% 
%  \begin{eqnarray}
%   f^{[N]}-P_\mathcal{S}f^{[N]}\!\!&=&\!\! p_N(A)(f^{[0]}-P_\mathcal{S}f^{[0]}) \qquad \forall N\in\mathbb{N}\,, \label{eq:iterativeNemi} \\
%   f^{[N]}-P_\mathcal{S}f^{[N]}\!\!&\in&\!\!(\ker A)^\perp\qquad\qquad\quad\qquad\, \forall N\in\mathbb{N}_0\,, \label{eq:fNPSFN-kerperp}\\
%   P_\mathcal{S} f^{[N]}\!\!&=&\!\!P_\mathcal{S} f^{[0]}\qquad\qquad\qquad\qquad\; \forall N\in\mathbb{N}\,, \label{eq:PFNPF0}
%  \end{eqnarray}

Let us now prove Lemmas \ref{lem:Abdd_Etheta} and \ref{lem:sigma-assumptions}.

\begin{proof}[Proof of Lemma \ref{lem:Abdd_Etheta}]
 Part (i) is evident from the fact that $\mathcal{D}(A^{\frac{\theta}{2}})=\cH$ for any $\theta\geqslant 0$, as $A$ is (everywhere-defined and) bounded and non-negative. 
 
 Part (ii) is therefore obvious if $\theta'\geqslant 0$. If, instead, $\theta\leqslant\theta'<0$, then $\ran (A^{-\frac{\theta}{2}})\subset \ran (A^{-\frac{\theta'}{2}})$, owing again to the boundedness and non-negativity of $A$, so part (ii) is actually valid in general.
 
 If $0\leqslant\theta\leqslant\theta'$, then
 \[
  u_{\theta'}(x)\;=\;A^{\theta'/2}(x-P_\mathcal{S}x)\;=\;A^{(\theta'-\theta)/2} A^{\theta/2}(x-P_\mathcal{S}x)\;=\;A^{(\theta'-\theta)/2}u_{\theta}(x)\,.
 \]
 If instead $\theta<0\leqslant\theta'$, then $u_{\theta'}(x)=A^{\theta'/2}(x-P_\mathcal{S}x)$ and $A^{-\theta/2}u_{\theta}(x)=x-P_\mathcal{S}x$, whence
 \[
  A^{(\theta'-\theta)/2}u_{\theta}(x)\;=\;A^{\theta'/2}(x-P_\mathcal{S}x)\;=\;u_{\theta'}(x)\,.
 \]
 Last, if $\theta\leqslant\theta'<0$, then $A^{-\xi/2}u_{\xi}(x)=x-P_\mathcal{S}x$ for both $\xi=\theta$ and $\xi=\theta'$, therefore from
 \[
  x-P_\mathcal{S}x\,=\,A^{-\theta/2}u_{\theta}(x)\,=\,A^{-\theta'/2}A^{(\theta'-\theta)/2}u_{\theta}(x)\quad\textrm{and}\quad A^{-\theta'/2}u_{\theta'}(x)\,=\,x-P_\mathcal{S}x
 \]
 one deduces that $u_{\theta'}(x)=A^{(\theta'-\theta)/2}u_{\theta}(x)$. In all possible cases the claimed identity is therefore proved. The inequality $\rho_{\theta'}(x)\:\leqslant\;\|A\|^{\theta'-\theta}\rho_{\theta}(x)$ then follows at once from \eqref{eq:defrhothetawithu}. This completes the proof of part (iii). 
\end{proof}

\begin{proof}[Proof of Lemma \ref{lem:sigma-assumptions}]
 Owing to \eqref{eq:iterativeNemi} and to the $A$-smoothness of $g$ and $f^{[0]}$, $f^{[N]}-P_\mathcal{S}f^{[N]}\in C^\infty(A)$, which by interpolation means in particular that $f^{[N]}-P_\mathcal{S}f^{[N]}\in \mathcal{D}(A^{\frac{\sigma}{2}})$ $\forall \sigma\geqslant 0$. This proves part (i) of the Lemma.
 
 Assume now that $f^{[0]}\in \mathscr{C}_{A,g}(\sigma)$ for some $\sigma<0$. In this case \eqref{eq:iterativeNemi} reads
 \[
  f^{[N]}-P_\mathcal{S}f^{[N]}\;=\;p_N(A)(f^{[0]}-P_\mathcal{S}f^{[0]})\;=\;p_N(A)\,A^{-\frac{\sigma}{2}}u_\sigma(f^{[0]})\,,
 \]
 thanks to the definition \eqref{eq:uthetadef} of $u_\sigma(f^{[0]})$. Therefore $f^{[N]}-P_\mathcal{S}f^{[N]}\in \ran(A^{-\frac{\sigma}{2}})$ and, again by \eqref{eq:uthetadef}, $ u_\sigma(f^{[N]})=p_N(A)\,u_\sigma(f^{[0]})$. This proves part (ii). 
\end{proof}

Next, let us establish an amount of important results that are measure-theoretic in nature. To this aim, with customary notation \cite{schmu_unbdd_sa}, let $E^A$ denote the projection-valued measure associated with the self-adjoint operator $A$, and let $\ud\langle x,E^{A}(\lambda)x\rangle$ denote the corresponding scalar measure associated with a vector $x\in\cH$. Such measures are supported on $\sigma(A)\subset[0,+\infty)$.

A special role is going to be played by the measure 
\begin{equation}\label{eq:defmu}
 \ud\mu_\sigma(\lambda)\;:=\;\ud\langle u_\sigma(f^{[0]}),E^{A}(\lambda)u_\sigma(f^{[0]})\rangle
 %\lambda^\sigma\,\ud\langle f^{[0]}-P_\mathcal{S}f^{[0]},E^{A}(\lambda)(f^{[0]}-P_\mathcal{S}f^{[0]})\rangle
\end{equation}
defined under the assumption that $f^{[0]}\in\mathscr{C}_{A,g}(\sigma)$ for a given $\sigma\in\mathbb{R}$.
%, besides the $A$-smoothness of $g$ and $f^{[0]}$. 
Clearly, by definition, $\mu_\sigma$ is a \emph{finite} measure with
\begin{equation}\label{eq:mu-norm}
\int_{[0,+\infty)}\ud\mu_\sigma(\lambda)\;=\;\|u_\sigma(f^{[0]})\|^2\,.
\end{equation}

Two relevant properties of $\mu_\sigma$ are the following.

\begin{proposition}\label{prop:measuremu}
 For the given self-adjoint and non-negative operator $A$ on $\cH$, and for given $g\in C^\infty(A)$, $\sigma\in\mathbb{R}$, $f^{[0]}\in C^\infty(A)\cap\mathscr{C}_{A,g}(\sigma)$, consider the measure $\mu_\sigma$ defined by \eqref{eq:defmu}. Then:
 \begin{itemize}
  \item[(i)] one has
  \begin{equation}\label{eq:musigma}
   \ud\mu_\sigma(\lambda)\;=\;\lambda^\sigma\,\ud\langle f^{[0]}-P_\mathcal{S}f^{[0]},E^{A}(\lambda)(f^{[0]}-P_\mathcal{S}f^{[0]})\rangle\,;
  \end{equation}
  \item[(ii)] the spectral value $\lambda=0$ is not an atom for $\mu_\sigma$, i.e.,
  \begin{equation}\label{eq:noatomatzero}
   \mu_\sigma(\{0\})\;=\;0\,.
  \end{equation}
 \end{itemize} 
\end{proposition}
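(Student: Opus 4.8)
\textbf{Proof plan for Proposition \ref{prop:measuremu}.}

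The plan is to prove part (i) by direct computation with the spectral calculus, separating the cases $\sigma\geqslant 0$ and $\sigma<0$ exactly as in the definition \eqref{eq:uthetadef} of $u_\sigma$, and then to derive part (ii) as a measure-theoretic consequence of part (i) together with the orthogonality relation \eqref{eq:fNPSFN-kerperp} applied at $N=0$.

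For part (i), set $v:=f^{[0]}-P_\mathcal{S}f^{[0]}$ for brevity; note $v\in(\ker A)^\perp=\overline{\ran A}$ by \eqref{eq:fNPSFN-kerperp}. When $\sigma\geqslant 0$ we have $u_\sigma(f^{[0]})=A^{\sigma/2}v$, and the standard transformation rule for spectral measures under a Borel function gives $\ud\langle A^{\sigma/2}v,E^A(\lambda)A^{\sigma/2}v\rangle=\lambda^{\sigma}\,\ud\langle v,E^A(\lambda)v\rangle$; this is \eqref{eq:musigma}. (Here I would quote the spectral theorem in the form that $\|A^{\sigma/2}E^A(\Omega)v\|^2=\int_\Omega\lambda^{\sigma}\,\ud\langle v,E^A(\lambda)v\rangle$ for every Borel set $\Omega$, which is exactly what $f^{[0]}\in\mathcal{D}(A^{\sigma/2})$ guarantees is finite.) When $\sigma<0$, the vector $u:=u_\sigma(f^{[0]})$ is by definition the minimal-norm solution of $A^{-\sigma/2}u=v$; since $-\sigma/2>0$, the operator $A^{-\sigma/2}$ restricted to $\overline{\ran A}$ is injective, and the minimal-norm solution lies in $\overline{\ran A}$ too, so $u=(A^{-\sigma/2}|_{\overline{\ran A}})^{-1}v$. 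Applying the spectral transformation rule to the positive exponent $-\sigma/2$ acting on $u$ gives $\ud\langle v,E^A(\lambda)v\rangle=\ud\langle A^{-\sigma/2}u,E^A(\lambda)A^{-\sigma/2}u\rangle=\lambda^{-\sigma}\,\ud\langle u,E^A(\lambda)u\rangle$ on $(0,+\infty)$; since $v,u\in\overline{\ran A}$ neither measure charges $\{0\}$, so one may divide by $\lambda^{-\sigma}$ (equivalently multiply by $\lambda^{\sigma}$, interpreting $0\cdot\lambda^{\sigma}$ as $0$ at the origin which carries no mass) to obtain $\ud\mu_\sigma(\lambda)=\ud\langle u,E^A(\lambda)u\rangle=\lambda^{\sigma}\,\ud\langle v,E^A(\lambda)v\rangle$, again \eqref{eq:musigma}. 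Care is needed to phrase the $\sigma<0$ identity so that both sides are genuine measures on $[0,+\infty)$ and agree including at $\lambda=0$; this is where the argument is least automatic.

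For part (ii), evaluate \eqref{eq:musigma} on the singleton $\{0\}$: $\mu_\sigma(\{0\})=0^{\sigma}\langle v,E^A(\{0\})v\rangle$. If $\sigma>0$ this is $0$ trivially. If $\sigma=0$ it equals $\langle v,E^A(\{0\})v\rangle=\|E^A(\{0\})v\|^2$, and $\ran E^A(\{0\})=\ker A$, so this vanishes precisely because $v=f^{[0]}-P_\mathcal{S}f^{[0]}\in(\ker A)^\perp$ by \eqref{eq:fNPSFN-kerperp}. If $\sigma<0$ one cannot read \eqref{eq:musigma} literally at $\lambda=0$, but from the definition \eqref{eq:defmu} directly, $\mu_\sigma(\{0\})=\|E^A(\{0\})u_\sigma(f^{[0]})\|^2$, and as noted above $u_\sigma(f^{[0]})\in\overline{\ran A}=(\ker A)^\perp$, so $E^A(\{0\})u_\sigma(f^{[0]})=0$. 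In every case $\mu_\sigma(\{0\})=0$.

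The only genuine obstacle is the bookkeeping in the $\sigma<0$ case of part (i): one must justify that the minimal-norm solution $u_\sigma(f^{[0]})$ coincides with the inverse of $A^{-\sigma/2}$ taken on $\overline{\ran A}$, and then handle the passage between $\ud\langle v,E^A v\rangle=\lambda^{-\sigma}\ud\langle u,E^A u\rangle$ and the claimed $\ud\mu_\sigma=\lambda^{\sigma}\ud\langle v,E^A v\rangle$ without dividing by zero. Everything else — the $\sigma\geqslant 0$ case and both subcases of part (ii) — is a one-line application of the spectral theorem plus the already-established fact \eqref{eq:fNPSFN-kerperp} that the relevant vectors are orthogonal to $\ker A$.
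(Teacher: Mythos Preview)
Your proposal is correct and follows essentially the same approach as the paper: both split part (i) into the cases $\sigma\geqslant 0$ and $\sigma<0$, use the spectral transformation rule $\ud\langle A^\alpha\psi,E^A(\lambda)A^\alpha\psi\rangle=\lambda^{2\alpha}\ud\langle\psi,E^A(\lambda)\psi\rangle$ (applied to $v$ or to $u$ according to which exponent is non-negative), and derive part (ii) from the fact that $f^{[0]}-P_\mathcal{S}f^{[0]}\in(\ker A)^\perp$. You are in fact slightly more careful than the paper at the one delicate point --- the passage from $\lambda^{-\sigma}\ud\mu_\sigma=\ud\widehat{\mu}_\sigma$ to $\ud\mu_\sigma=\lambda^{\sigma}\ud\widehat{\mu}_\sigma$ when $\sigma<0$ --- by explicitly observing that $u_\sigma(f^{[0]})\in\overline{\ran A}$, so that neither measure charges $\{0\}$ and the inversion is legitimate; the paper passes over this step without comment.
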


\begin{proof}
 The identity \eqref{eq:musigma} when $\sigma\geqslant 0$ follows immediately from the definition \eqref{eq:defmu} of $\ud\mu_\sigma$ and from the definition \eqref{eq:uthetadef} of $u_\sigma(f^{[0]})=A^{\frac{\sigma}{2}}(f^{[0]}-P_\mathcal{S}f^{[0]})$, owing to the property
 \[
  \ud\langle A^\alpha\psi,E^{A}(\lambda)A^\alpha\psi\rangle\;=\;\lambda^{2\alpha}\ud\langle\psi,E^{A}(\lambda)\psi\rangle\,,\qquad \alpha\geqslant 0\,,\qquad\psi\in\mathcal{D}(A^\alpha)\,.
 \]
 If instead $\sigma<0$, let us consider the auxiliary measures
 \[
  \ud\widetilde{\mu}_\sigma(\lambda)\;:=\;\lambda^{-\sigma}\ud\mu_\sigma(\lambda)\,,\qquad  \ud\widehat{\mu}_\sigma(\lambda)\;:=\;\ud\langle f^{[0]}-P_\mathcal{S}f^{[0]},E^{A}(\lambda)(f^{[0]}-P_\mathcal{S}f^{[0]})\rangle\,.
 \]
 On an arbitrary Borel subset $\Omega\subset [0,+\infty)$ one then has
 \[
  \begin{split}
   \widetilde{\mu}_\sigma(\Omega)\;&=\;\int_\Omega\lambda^{-\sigma}\ud\mu_\sigma(\lambda)\;=\;\|E^A(\Omega)A^{-\frac{\sigma}{2}}u_\sigma(f^{[0]})\|^2 \\
   &=\;\|E^A(\Omega)(f^{[0]}-P_\mathcal{S}f^{[0]})\|^2\;=\;\int_\Omega\ud\widehat{\mu}_\sigma(\lambda)\;=\;\widehat{\mu}_\sigma(\Omega)\,,
  \end{split}
 \]
 having used the definition \eqref{eq:uthetadef} in the form $A^{-\frac{\sigma}{2}}u_\sigma(f^{[0]})=f^{[0]}-P_\mathcal{S}f^{[0]}$. This shows that $\ud\widetilde{\mu}_\sigma(\lambda)=\ud\widehat{\mu}_\sigma(\lambda)$, whence again \eqref{eq:musigma}. Part (i) is proved.

 Concerning part (ii), let us recall from \eqref{eq:fNPSFN-kerperp} that $f^{[0]}-P_\mathcal{S}f^{[0]}\in(\ker A)^\perp$. Therefore, $\widehat{\mu}_\sigma(\{0\})=0$. Thus, \eqref{eq:musigma} implies that also $\mu_\sigma(\{0\})=0$.  
\end{proof}

In turn, Proposition \ref{prop:measuremu} allows us to discuss one further set of technical ingredients for the proof of Theorem \ref{thm:main}. They concern the polynomial $p_N$, in the expression \eqref{eq:iterativeNemi} of the $\xi$-iterates $f^{[N]}$, that corresponds to the actual minimisation \eqref{eq:thetaiterates}.

\begin{proposition}\label{prop:sN}
For the given self-adjoint and non-negative operator $A$ on $\cH$, and for given $g\in C^\infty(A)$, $\sigma\in\mathbb{R}$, $f^{[0]}\in C^\infty(A)\cap\mathscr{C}_{A,g}(\sigma)$, and $\xi\geqslant 0$ 
%with $\xi>\sigma$, 
let
$f^{[N]}$ be the $N$-th $\xi$-iterate defined by \eqref{eq:thetaiterates} with initial guess $f^{[0]}$ and parameter $\theta=\xi$, and let
\begin{equation}\label{eq:defsN}
\begin{split}
 s_N\;:=&\:\;\arg\min_{\!\!\!\!\!\!\!\!\!\!p_N\in\mathbb{P}_N^{(1)}}\int_{[0,+\infty)}\lambda^\xi \,p_N^2(\lambda)\,\ud\langle f^{[0]}-P_\mathcal{S}f^{[0]},E^{A}(\lambda)(f^{[0]}-P_\mathcal{S}f^{[0]})\rangle 
% =&\:\;\arg\min_{\!\!\!\!\!\!\!\!\!\!p_N\in\mathbb{P}_N^{(1)}}\int_{[0,+\infty)}\lambda^{\xi-\sigma} \,p_N^2(\lambda)\,\ud\mu_\sigma(\lambda)
\end{split} 
\end{equation}
for each $N\in\mathbb{N}$.
%where $\mathbb{P}_N^{(1)}$ is the space of polynomials $p(\lambda)$, $\lambda\in[0,+\infty)$, such that $p(0)=1$. 
Then the following properties hold.
\begin{itemize}
 \item[(i)] One has
  \begin{equation}\label{eq:fNwithsigmaN}
  f^{[N]}-P_\mathcal{S}f^{[N]}\;=\;s_N(A)(f^{[0]}-P_\mathcal{S}f^{[0]}) \qquad \forall N\in\mathbb{N}\,.
 \end{equation}
 \item[(ii)] The family $(s_N)_{N\in\mathbb{N}}$ is a set of orthogonal polynomials on $[0,+\infty)$ with respect to the measure
 \begin{equation}\label{eq:defnumeasure}
  \begin{split}
   \ud\nu_{\xi}(\lambda)\;:=&\:\;\lambda^{\xi-\sigma+1}\,\ud\mu_\sigma(\lambda) \\
  =&\;\:\lambda^{\xi+1}\,\ud\langle f^{[0]}-P_\mathcal{S}f^{[0]},E^{A}(\lambda)(f^{[0]}-P_\mathcal{S}f^{[0]})\rangle
  \end{split}
  \end{equation}
 and satisfying
 \begin{equation}\label{eq:sNnormalisation}
  \deg s_N\;=\;N\,,\qquad s_N(0)\;=\;1\qquad\forall N\in\mathbb{N}
 \end{equation}
 (under the further tacit assumption that the $s_N$'s are all non-vanishing with respect to the measure $\mu_\sigma$).
 \item[(iii)] One has
 \begin{equation}\label{eq:rhosigmaint}
  \rho_\sigma(f^{[N]})\;=\;\int_{[0,+\infty)}s_N^2(\lambda)\,\ud\mu_\sigma(\lambda)\qquad \forall N\in\mathbb{N}\,.
 \end{equation}
\end{itemize}
\end{proposition}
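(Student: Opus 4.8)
The plan is to reduce everything to the spectral-theoretic identity \eqref{eq:iterativeNemi} and then read off the three claims as statements about the finite measure $\mu_\sigma$ and the weighted measure $\nu_\xi$. First I would address part (i). By Proposition \ref{prop:FNproperties} we already know $f^{[N]}-P_\mathcal{S}f^{[N]}=p_N(A)(f^{[0]}-P_\mathcal{S}f^{[0]})$ for \emph{some} $p_N\in\mathbb{P}_N^{(1)}$, and that $P_\mathcal{S}f^{[N]}=P_\mathcal{S}f^{[0]}$, so the only thing to check is that the polynomial realised by the minimisation \eqref{eq:thetaiterates} (with $\theta=\xi$) is precisely the minimiser $s_N$ of \eqref{eq:defsN}. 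This is a change-of-variables computation: for $h\in\{f^{[0]}\}+\mathcal{K}_N(A,\mathfrak{R}_0)$ one has $h-P_\mathcal{S}h=q_{N-1}(A)A(f^{[0]}-P_\mathcal{S}f^{[0]})+(f^{[0]}-P_\mathcal{S}f^{[0]})=p_N(A)(f^{[0]}-P_\mathcal{S}f^{[0]})$ with $p_N(\lambda)=\lambda q_{N-1}(\lambda)+1$, and as $q_{N-1}$ ranges over $\mathbb{P}_{N-1}$, $p_N$ ranges over all of $\mathbb{P}_N^{(1)}$. Applying the spectral theorem,
\[
 \|A^{\xi/2}(h-P_\mathcal{S}h)\|^2\;=\;\int_{[0,+\infty)}\lambda^{\xi}p_N^2(\lambda)\,\ud\langle f^{[0]}-P_\mathcal{S}f^{[0]},E^A(\lambda)(f^{[0]}-P_\mathcal{S}f^{[0]})\rangle\,,
\]
so minimising the left-hand side over $h$ is literally minimising the right-hand side over $p_N\in\mathbb{P}_N^{(1)}$, giving $p_N=s_N$ and hence \eqref{eq:fNwithsigmaN}. (One should note in passing that the minimiser is unique: the integrand is a positive-definite quadratic form on the finite-dimensional space $\mathbb{P}_N^{(1)}$ because the measure has infinitely many points of support, which follows from $A$ being unbounded — or at least having infinitely many spectral points — so that no nonzero polynomial vanishes $\mu_\sigma$-a.e.)

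For part (ii), the normalisation $s_N(0)=1$ is immediate since $s_N\in\mathbb{P}_N^{(1)}$. The claim $\deg s_N=N$ and the orthogonality with respect to $\ud\nu_\xi$ come from the standard characterisation of least-squares polynomials with a normalisation constraint: writing the objective in \eqref{eq:defsN} as $\int \lambda^\xi p_N^2\,\ud\widehat\mu_\sigma$ where $\ud\widehat\mu_\sigma=\ud\langle f^{[0]}-P_\mathcal{S}f^{[0]},E^A(\cdot)(f^{[0]}-P_\mathcal{S}f^{[0]})\rangle$, I would substitute $p_N(\lambda)=1+\lambda r_{N-1}(\lambda)$ with $r_{N-1}\in\mathbb{P}_{N-1}$ free, so the functional becomes $\int \lambda^\xi(1+\lambda r_{N-1})^2\,\ud\widehat\mu_\sigma$. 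Setting the Gâteaux derivative in the direction of an arbitrary $w\in\mathbb{P}_{N-1}$ to zero yields $\int \lambda^{\xi+1}(1+\lambda r_{N-1})w\,\ud\widehat\mu_\sigma=0$, i.e. $\int s_N(\lambda)\,\lambda\, w(\lambda)\,\lambda^\xi\,\ud\widehat\mu_\sigma=\int s_N(\lambda)(\lambda w(\lambda))\,\ud\nu_\xi/\lambda^{?}$ — more cleanly, using $\ud\nu_\xi(\lambda)=\lambda^{\xi+1}\ud\widehat\mu_\sigma(\lambda)$ (which is exactly the second line of \eqref{eq:defnumeasure}, the first line following from \eqref{eq:musigma} in Proposition \ref{prop:measuremu}), the stationarity condition reads $\int_{[0,+\infty)} s_N(\lambda)\,w(\lambda)\,\ud\nu_\xi(\lambda)=0$ for all $w\in\mathbb{P}_{N-1}$. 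That is precisely the orthogonality of $s_N$ to $\mathbb{P}_{N-1}$ in $L^2(\ud\nu_\xi)$. The fact that $\nu_\xi$ has infinite support (again because $A$ is unbounded, so $\widehat\mu_\sigma$ has infinitely many support points, and $\nu_\xi(\{0\})=0$ by Proposition \ref{prop:measuremu}(ii) together with the factor $\lambda^{\xi+1}$) then forces $\deg s_N=N$ exactly — if $\deg s_N<N$ one could lower the objective — and gives that $(s_N)_{N\in\mathbb{N}}$ is genuinely a family of orthogonal polynomials (unique up to the normalisation, which here is fixed by $s_N(0)=1$ rather than by leading coefficient).

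Part (iii) is then a one-line consequence. By \eqref{eq:fNwithsigmaN} and $P_\mathcal{S}f^{[N]}=P_\mathcal{S}f^{[0]}$ we have $f^{[N]}-P_\mathcal{S}f^{[0]}=s_N(A)(f^{[0]}-P_\mathcal{S}f^{[0]})$, and $u_\sigma(f^{[N]})=s_N(A)u_\sigma(f^{[0]})$ by Lemma \ref{lem:sigma-assumptions}(ii) when $\sigma<0$ (and directly when $\sigma\geqslant0$). Hence
\[
 \rho_\sigma(f^{[N]})\;=\;\|s_N(A)u_\sigma(f^{[0]})\|^2\;=\;\int_{[0,+\infty)}s_N^2(\lambda)\,\ud\langle u_\sigma(f^{[0]}),E^A(\lambda)u_\sigma(f^{[0]})\rangle\;=\;\int_{[0,+\infty)}s_N^2(\lambda)\,\ud\mu_\sigma(\lambda)
\]
by the spectral theorem and the definition \eqref{eq:defmu} of $\mu_\sigma$. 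The only real subtlety — and the step I expect to demand the most care — is part (ii): making the variational argument rigorous requires knowing that the quadratic functional on $\mathbb{P}_N^{(1)}$ is strictly convex, which rests on the measure $\widehat\mu_\sigma$ (equivalently $\mu_\sigma$, equivalently $\nu_\xi$) not being supported on finitely many points; this is where the hypothesis that $A$ is genuinely unbounded (or at least has infinite spectrum) enters, and it must be invoked explicitly, since in the truly finite-dimensional case the statement would fail once $N$ exceeds the number of distinct spectral values. Everything else is bookkeeping with the spectral measure and the change of variables $p_N\leftrightarrow q_{N-1}$.
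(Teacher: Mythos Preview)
Your approach is essentially identical to the paper's: part (i) by rewriting the minimisation \eqref{eq:thetaiterates} as the spectral integral and identifying the minimising polynomial with $s_N$; part (ii) by taking the first variation (G\^ateaux derivative) in the direction $\varepsilon\lambda q_{N-1}$ to extract orthogonality of $s_N$ against $\mathbb{P}_{N-1}$ in $L^2(\ud\nu_\xi)$; and part (iii) by combining \eqref{eq:fNwithsigmaN} with Lemma~\ref{lem:sigma-assumptions}(ii) (for $\sigma<0$) and the spectral theorem. One small correction to your side remark: the infinite-support condition you invoke for strict convexity and for $\deg s_N=N$ is a property of the \emph{scalar} spectral measure of the vector $f^{[0]}-P_\mathcal{S}f^{[0]}$, not of $\sigma(A)$ itself --- even for unbounded $A$ this vector may have finitely supported spectral measure, in which case the algorithm terminates in finitely many steps; the paper does not settle this inside the present proof but defers it (see Proposition~\ref{prop:properties_of_zeros}(i) and the opening of Section~\ref{sec:mainproof}).
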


\begin{proof}
 Denote temporarily by $\widetilde{s}_N\in\mathbb{P}_N^{(1)}$ the polynomial that qualifies the iterate $f^{[N]}$ in \eqref{eq:iterativeNemi} by means of the minimisation \eqref{eq:thetaiterates} with $\theta=\xi$. Then
 \[
 \begin{split}
  \min_{\!\!\!\!\!\!\! h\in\{f^{[0]}\}+\mathcal{K}_N(A,\mathfrak{R}_0)}\;&\|A^{\xi/2}(h-P_\mathcal{S}h)\|^2\;=\;\|A^{\xi/2}(f^{[N]}-P_\mathcal{S}f^{[N]})\|^2 \\
  &=\;\|A^{\xi/2}\widetilde{s}_N(A)(f^{[0]}-P_\mathcal{S}f^{[0]})\|^2 \\
  &=\;\int_{[0,+\infty)}\lambda^\xi \,\widetilde{s}_N^{\,2}(\lambda)\,\ud\langle f^{[0]}-P_\mathcal{S}f^{[0]},E^{A}(\lambda)(f^{[0]}-P_\mathcal{S}f^{[0]})\rangle\,.
 \end{split}
 \]
 Comparing the above identity with \eqref{eq:defsN} we see that $\widetilde{s}_N$ must be precisely the polynomial $s_N$. Therefore, \eqref{eq:iterativeNemi} takes the form \eqref{eq:fNwithsigmaN}. This proves part (i).

 By means of \eqref{eq:musigma} we may re-write \eqref{eq:defsN} as
 \[
  s_N\;=\;\arg\min_{\!\!\!\!\!\!\!\!\!\!p_N\in\mathbb{P}_N^{(1)}}\int_{[0,+\infty)}\lambda^{\xi-\sigma}\,p_N^2(\lambda)\ud\mu_\sigma(\lambda)\,.
 \]
 The latter minimising property of $s_N$ implies
 \[
 \begin{split}
  0\;&=\;\frac{\ud}{\ud\varepsilon}\Big|_{\varepsilon=0}\int_{[0,+\infty)}\lambda^{\xi-\sigma}\,(s_N(\lambda)+\varepsilon\lambda \,q_{N-1}(\lambda))^2\,\ud\mu_\sigma(\lambda) \\
  &=\;2\int_{[0,+\infty)}\lambda^{\xi-\sigma+1}\,s_N(\lambda)\,q_{N-1}(\lambda)\,\ud\mu_\sigma(\lambda)
 \end{split}
 \]
 for any $q_{N-1}\in \mathbb{P}_{N-1}$ (indeed, $s_N +\varepsilon\lambda \,q_{N-1}\in \mathbb{P}_N^{(1)}$). 
  %Here, by assumption, $\xi-\sigma+1\geqslant 1$. 
 Equivalently, owing to \eqref{eq:defnumeasure},
 \[
  \int_{[0,+\infty)}s_N(\lambda)\,q_{N-1}(\lambda)\,\ud\nu_{\xi}(\lambda)\;=\;0\qquad \forall q_{N-1}\in \mathbb{P}_{N-1}\,.
 \]
 Such a condition is valid for each $N\in\mathbb{N}$ and, as well known \cite{Szego-OrthPolyBook,Chihara-book-1978,Koornwinder_OrthPoly}, this amounts to saying that $(s_N)_{N\in\mathbb{N}}$ is a set of orthogonal polynomials on $[0,+\infty)$ with respect to the measure $\ud\nu_{\xi}$. Part (ii) is thus proved.

 If $\sigma\geqslant 0$, then \eqref{eq:PFNPF0}, \eqref{eq:rhoindicator}, \eqref{eq:musigma}, and \eqref{eq:fNwithsigmaN} yield
 \[
  \begin{split}
   \rho_\sigma(f^{[N]})\;&=\;\big\|A^{\frac{\sigma}{2}}(f^{[N]}-P_\mathcal{S}f^{[N]})\big\|^2\;=\;\big\|A^{\frac{\sigma}{2}}s_N(A)(f^{[0]}-P_\mathcal{S}f^{[0]})\big\|^2 \\
   &=\;\int_{[0,+\infty)}s_N^2(\lambda)\,\ud\mu_\sigma(\lambda)\,.
  \end{split}
 \]
 If instead $\sigma<0$, then owing to \eqref{eq:fNwithsigmaN} the identity \eqref{eq:usigma-uzero} reads
 \[
  u_\sigma(f^{[N]})\;=\;s_N(A)\,u_\sigma(f^{[0]})\,.
 \]
 The latter identity, together with \eqref{eq:rhoindicator} and \eqref{eq:defmu}, yield
 \[
   \rho_\sigma(f^{[N]})\;=\;\big\|u_\sigma(f^{[N]})\big\|^2\;=\;\big\|s_N(A)\,u_\sigma(f^{[0]})\big\|^2\;=\;\int_{[0,+\infty)}s_N^2(\lambda)\,\ud\mu_\sigma(\lambda)\,.
 \]
 In either case \eqref{eq:rhosigmaint} is established. This proves part (iii). 
\end{proof}

\begin{remark}
 The measure $\nu_{\xi}$ too is finite, with
 \begin{equation}
  \int_{[0,+\infty)}\ud\nu_{\xi}\;=\;\big\|A^{\frac{\xi+1}{2}}(f^{[0]}-P_\mathcal{S}f^{[0]})\big\|^2\,,
 \end{equation}
 as is evident from \eqref{eq:defnumeasure}. In fact, one could define $\nu_{\xi}$ for arbitrary $\xi\geqslant -1$: we keep the restriction to $\xi\geqslant 0$ because $\xi$ here is the parameter $\theta=\xi$ required in the definition \eqref{eq:thetaiterates} of the $\xi$-iterates, and as such must therefore be non-negative.  
\end{remark}

\begin{remark}\label{rem:implicit-xi-dependence}
 There is an implicit dependence on $\xi$ in each $s_N$, as is clear from \eqref{eq:defsN}, analogously to the fact that the iterates $f^{[N]}$'s depend on the choice of the parameter $\xi$. We simply omit such a dependence from the notation $s_N$.
\end{remark}

We thus see from Proposition \ref{prop:sN}(iii) that the control of the convergence of the $f^{[N]}$'s in the $\rho_\sigma$-sense is boiled down to monitoring a precise spectral integral, namely the right-hand side of \eqref{eq:rhosigmaint}. For an efficient estimate of the latter we shall make use of properties of the polynomials $s_N$ and of the measure $\nu_{\xi}$ that we are going to discuss in the remaining part of this Section.

% Here and for later reference it is convenient to denote by $(\widehat{s}_N)_{N\in\mathbb{N}}$ the \emph{monic} system of polynomials corresponding to $(s_N)_{N\in\mathbb{N}}$, i.e.,
% \begin{equation}\label{eq:monic}
%  \widehat{s}_N(\lambda)\;:=\;\bigg(\frac{1}{N!}\,\frac{\ud^N}{\ud\lambda^N}\Big|_{\lambda=0}s_N(\lambda) \bigg)^{\!-1}s_N(\lambda)\,.
% \end{equation}
% Thus, each $\widehat{s}_N$ is the multiple of $s_N$ with unit coefficient of the $\lambda^N$-power.

 Here is our main result in this context.

\begin{proposition}\label{prop:properties_of_zeros}
 Consider the set $(s_N)_{N\in\mathbb{N}}$ of orthogonal polynomials on $[0,+\infty)$ with respect to the measure $\nu_{\xi}$, as defined in \eqref{eq:defsN} and \eqref{eq:defnumeasure} under the assumptions of Proposition \ref{prop:sN}.
 \begin{itemize}
  \item[(i)] For each $N\in\mathbb{N}$, either $s_N(\lambda)=0$ $\nu_{\xi}$-almost everywhere, or $s_N$ has exactly $N$ simple zeroes, all located in $(0,+\infty)$.
  \end{itemize}
  Assume now the $s_N$'s are all non-vanishing with respect to the $\nu_{\xi}$-measure, and denote by $\lambda_k^{(N)}$ the $k$-th zero of $s_N$, ordering the zeros as
  \begin{equation}\label{eq:EVordered}
   0<\lambda_1^{(N)}\,<\,\lambda_2^{(N)}\,<\,\cdots\,<\,\lambda_N^{(N)}\,.
  \end{equation}
  \begin{itemize}
  \item[(ii)] (Separation.) One has
  \begin{equation}
   \lambda_k^{(N+1)}\,<\,\lambda_k^{(N)}\,<\,\lambda_{k+1}^{(N+1)} \qquad \forall k\in\{1,2,\dots,N-1\}\,,
  \end{equation}
  that is, the zeroes of $s_N$ and $s_{N+1}$ mutually separate each other.
  \item[(iii)] (Monotonicity.) For each integer $k\geqslant 1$, 
  \begin{equation}
   \begin{split}
    (\lambda_k^{(N)})_{N=k}^\infty & \quad\textrm{is a decreasing sequence,} \\
        (\lambda_{N-k+1}^{(N)})_{N=k}^\infty & \quad\textrm{is an increasing sequence.}
   \end{split}
  \end{equation}
  In particular, the limits
    \begin{equation}\label{lambda1lambdainf}
   \lambda_1\;:=\;\lim_{N\to\infty}\lambda_1^{(N)}\,,\qquad \lambda_\infty\;:=\;\lim_{N\to\infty}\lambda_N^{(N)}
  \end{equation}
  exist in $[0,+\infty)\cup\{+\infty\}$.
  \item[(iv)] (Orthogonality.) One has
  \begin{equation}\label{eq:orthogonality}
   \int_{[0,\lambda_1^{(N)})}s^2_N(\lambda)\,\frac{\lambda_1^{(N)}}{\lambda_1^{(N)}-\lambda}\,\ud\nu_{\xi}(\lambda)\;=\;\int_{[\lambda_1^{(N)},+\infty)}s^2_N(\lambda)\,\frac{\lambda_1^{(N)}}{\lambda-\lambda_1^{(N)}}\,\ud\nu_{\xi}(\lambda)
  \end{equation}
  for any $N\in\mathbb{N}$.
 \end{itemize}
 Finally, assume in addition to the assumptions of Proposition \ref{prop:sN} also assumptions  \eqref{eq:maincond1}-\eqref{eq:maincond2} of Theorem \ref{thm:main}, or assumptions \eqref{eq:maincond3} of Corollary \ref{cor:main}. In other words, assume in addition that $f^{[0]},g\in\mathcal{D}^a(A)$, or also that $f^{[0]}-P_\mathcal{S}f^{[0]}\in\mathcal{D}^{qa}(A)$.
 \begin{itemize}
 \item[(v)] (Representation.) The measure $\nu_{\xi}$ is only supported on the so-called `true interval of orthogonality' $[\lambda_1,\lambda_\infty]$.
%   , and $\lambda_1$ is not an atom for $\nu_{\xi}$, namely
%   \begin{equation}
%    \nu_{\xi}(\{\lambda_1\})\;=\;0\,.
%   \end{equation}
  Here and in the following, the symbol $[\lambda_1,\lambda_\infty]$ is understood as the closure of $(\lambda_1,\lambda_\infty)$.
 \end{itemize}
\end{proposition}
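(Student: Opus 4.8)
The plan is to recognise Proposition~\ref{prop:properties_of_zeros} as a list of properties of the orthogonal polynomials $(s_N)_{N\in\mathbb{N}}$ for the positive, finite measure $\nu_{\xi}$ on $[0,+\infty)$, and to obtain them by a mixture of short direct arguments and appeals to the classical theory \cite{Szego-OrthPolyBook,Chihara-book-1978,Koornwinder_OrthPoly}; the real work is concentrated in part (iv). The structural facts I would use throughout are: $(s_N)_{N}$ is orthogonal with respect to $\nu_{\xi}$ with $s_N(0)=1$ (Proposition~\ref{prop:sN}(ii)); $\nu_{\xi}$ is finite and has all moments finite (since $f^{[0]}-P_\mathcal{S}f^{[0]}\in C^\infty(A)$); and $\nu_{\xi}$ is supported in $[0,+\infty)$ with no mass at the origin, $\nu_{\xi}(\{0\})=0$, owing to $f^{[0]}-P_\mathcal{S}f^{[0]}\in(\ker A)^\perp$ and to the factor $\lambda^{\xi+1}$ in \eqref{eq:defnumeasure} (cf.\ \eqref{eq:fNPSFN-kerperp}, \eqref{eq:noatomatzero}). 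For part (i) I would argue: if the minimal value $\int_{[0,+\infty)}s_N^2\,\ud\nu_{\xi}$ in \eqref{eq:defsN} is $0$, then $s_N=0$ $\nu_{\xi}$-almost everywhere, which is the first alternative and happens precisely when $\mathrm{supp}\,\nu_{\xi}$ consists of at most $N$ points. Otherwise one runs the standard sign-change argument: let $y_1<\dots<y_r$ be the points of $\mathrm{supp}\,\nu_{\xi}$ at which $s_N$ changes sign, set $q(\lambda):=\prod_{j=1}^{r}(\lambda-y_j)$; then $s_N q$ has a fixed sign $\nu_{\xi}$-a.e., and if $r<N$ then $q\in\mathbb{P}_{N-1}$ and orthogonality forces $\int s_N q\,\ud\nu_{\xi}=0$, hence $s_N q=0$ and therefore $s_N=0$ $\nu_{\xi}$-a.e., a contradiction. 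Thus $r=N$, so $s_N$ has $N$ distinct, hence simple, real zeros, all lying in $\mathrm{supp}\,\nu_{\xi}\subset[0,+\infty)$; since $s_N(0)=1\neq0$ the origin is not a zero, so all the zeros lie in $(0,+\infty)$.

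For parts (ii) and (iii), in the non-degenerate regime the monic polynomials $\widehat{s}_N$ of \eqref{eq:monic} satisfy a three-term recurrence $\widehat{s}_{N+1}(\lambda)=(\lambda-a_N)\widehat{s}_N(\lambda)-b_N\widehat{s}_{N-1}(\lambda)$ with $b_N>0$, from which the classical full interlacing
\[
 \lambda_1^{(N+1)}\,<\,\lambda_1^{(N)}\,<\,\lambda_2^{(N+1)}\,<\,\lambda_2^{(N)}\,<\,\cdots\,<\,\lambda_N^{(N)}\,<\,\lambda_{N+1}^{(N+1)}
\]
of the zeros of $\widehat{s}_N$ (equivalently of $s_N$) follows. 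Its inner inequalities are exactly the separation statement (ii); reading the chain for a fixed $k$ shows that $(\lambda_k^{(N)})_{N\geqslant k}$ decreases and $(\lambda_{N-k+1}^{(N)})_{N\geqslant k}$ increases, and since the former is bounded below by $0$ the limits exist in $[0,+\infty)\cup\{+\infty\}$ — this is (iii), and it produces $\lambda_1$ and $\lambda_\infty$.

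For part (v): $\lambda_1^{(N)}$ being a simple zero of $s_N$, the quotient $s_N(\lambda)/(\lambda-\lambda_1^{(N)})$ is a polynomial of degree $N-1$, hence orthogonal to $s_N$ against $\nu_{\xi}$, so that $\int_{[0,+\infty)}s_N^2(\lambda)(\lambda-\lambda_1^{(N)})^{-1}\,\ud\nu_{\xi}(\lambda)=0$ (all integrals here are finite, the integrand being a genuine polynomial). Splitting this integral at $\lambda_1^{(N)}$ — the point $\lambda_1^{(N)}$ itself contributing nothing, as $s_N(\lambda_1^{(N)})=0$ — then multiplying through by $\lambda_1^{(N)}>0$ and flipping the sign of the kernel on $[0,\lambda_1^{(N)})$, where $\lambda-\lambda_1^{(N)}<0$, delivers precisely \eqref{eq:orthogonality}.

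The genuine obstacle is part (iv). That $\mathrm{supp}\,\nu_{\xi}\subset[\lambda_1,\lambda_\infty]$ is the content of the classical "true interval of orthogonality" theorem \cite{Chihara-book-1978}; a concrete route, valid whenever the moment problem for $\nu_{\xi}$ is determinate, is to observe that the $N$-point Gauss quadrature measures for $\nu_{\xi}$ have all their nodes in $[\lambda_1^{(N)},\lambda_N^{(N)}]$, have total mass $\int\ud\nu_{\xi}$, are exact on $\mathbb{P}_{2N-1}$, hence converge weakly-$*$ to $\nu_{\xi}$, whose support is thereby forced into the closed set $[\lambda_1,\lambda_\infty]$. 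The delicate point is $\nu_{\xi}(\{\lambda_1\})=0$: when $\lambda_1=0$ it is immediate from $\nu_{\xi}(\{0\})=0$ (cf.\ \eqref{eq:noatomatzero}); in the case $\lambda_1>0$ one has to combine the structural absence of mass of $\nu_{\xi}$ at the origin (carried by the factor $\lambda^{\xi+1}$) with the classical fine analysis of the smallest point of the true interval of orthogonality and of the zeros of $s_N$ that may fall in gaps of $\mathrm{supp}\,\nu_{\xi}$. This last step is the one I expect to demand the most care.
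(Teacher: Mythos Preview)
Your treatment of parts (i), (ii), (iii) and (v) is essentially the paper's own: classical zero-location and interlacing for (i)--(iii), and for (v) the same choice of the test polynomial $q_{N-1}(\lambda)=\lambda_1^{(N)}s_N(\lambda)/(\lambda_1^{(N)}-\lambda)$ (you phrase it as orthogonality of $s_N$ against $s_N/(\lambda-\lambda_1^{(N)})$, which is equivalent).

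The gap is in part (iv), and it is twofold. First, you invoke the weak-$*$ convergence of Gauss quadrature measures to $\nu_{\xi}$ ``whenever the moment problem for $\nu_{\xi}$ is determinate'', but you never verify determinacy. For an unbounded $A$ the measure $\nu_{\xi}$ is supported on the whole half-line and its moments can grow arbitrarily fast, so determinacy is not automatic and must be proved. The paper supplies the missing step: since by (i) every zero $\lambda_k^{(N)}$ is strictly positive, the monic polynomial satisfies $\widehat{s}_N(-1)^2=\prod_{k=1}^N(1+\lambda_k^{(N)})^2>1$ for every $N$, so $\sum_N|\widehat{s}_N(-1)|^2=+\infty$ and hence $\omega(-1)=0$ in the Hamburger criterion \cite{Shohat-Tamarkin_ProblemOfMoments1943,Koornwinder_OrthPoly}; this forces the moment problem to be determinate and identifies $\nu_{\xi}$ with the representing measure $\eta$ built from the Gauss nodes.

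Second, your argument for $\nu_{\xi}(\{\lambda_1\})=0$ when $\lambda_1>0$ is only a placeholder; the factor $\lambda^{\xi+1}$ and ``zeros in gaps of $\mathrm{supp}\,\nu_{\xi}$'' are not what is needed. The paper's mechanism is cleaner and does not use the origin at all: by the strict interlacing in (ii)--(iii) one has $\lambda_1<\lambda_1^{(N)}$ for every $N$, so the step distributions $\psi_N$ underlying the Gauss quadrature satisfy $\psi_N(\lambda)=0$ for all $\lambda\leqslant\lambda_1$; passing to the limit gives $\psi(\lambda_1)=0=\lim_{\lambda\to\lambda_1^-}\psi(\lambda)$, hence $\eta(\{\lambda_1\})=0$, and then determinacy transfers this to $\nu_{\xi}$. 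In short: prove determinacy via $\omega(-1)=0$, and derive the atom-free property from $\lambda_1<\lambda_1^{(N)}$ at the level of the quadrature distribution functions, then identify $\eta=\nu_{\xi}$.
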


Observe that, for the first time, in Proposition \ref{prop:properties_of_zeros}(v) the assumption of $A$-analyticity of $g$ and $f^{[0]}$ kicks in, replacing the mere $A$-smoothness. This is the condition prescribed in the final Theorem \ref{thm:main}. So, prior to presenting the proof of Proposition \ref{prop:properties_of_zeros}, let us highlight in what form we shall exploit the extra condition of $A$-analyticity of $g$ and $f^{[0]}$.

\begin{lemma}\label{lem:fpPf-analitic}
 Let $A$ be a linear operator on a Hilbert space $\cH$.
 \begin{itemize}
  \item[(i)] Assume that $g\in\mathcal{D}^{qa}(A)\cap\mathrm{ran}A$. Then any $f\in\mathcal{D}(A)$ such that $Af=g$ satisfies $f\in \mathcal{D}^{qa}(A)$.
  \item[(ii)] Assume that $g\in\mathcal{D}^{a}(A)\cap\mathrm{ran}A$. Then any $f\in\mathcal{D}(A)$ such that $Af=g$ satisfies $f\in \mathcal{D}^{a}(A)$.
  \item[(iii)] Assume that $g\in\mathcal{D}^a(A)\cap\mathrm{ran}A$ and $f^{[0]}\in \mathcal{D}^a(A)$. Then $f^{[0]}-P_\mathcal{S}f^{[0]}\in \mathcal{D}^a(A)$.
 \end{itemize}
% 
%  
%  Given $A=A^*\geqslant\mathbb{O}$, assume that $g\in\mathcal{D}^a(A)\cap\mathrm{ran}A$ and $f^{[0]}\in \mathcal{D}^a(A)$. Then:
%  \begin{itemize}
%   \item[(i)] Any $f\in\mathcal{D}(A)$ such that $Af=g$ satisfies $f\in \mathcal{D}^a(A)$;
%   \item[(ii)] $f^{[0]}-P_\mathcal{S}f^{[0]}\in \mathcal{D}^a(A)$.
%  \end{itemize}
\end{lemma}

\begin{proof} 
 (i) As $Af=g$, then 
 \[
  \sum_{n=1}^\infty\|A^nf\|^{-\frac{1}{n}}=\|g\|^{-1}+\sum_{n=1}^\infty\|A^n g\|^{-\frac{1}{n+1}}\,.
 \]
 The latter series, by a standard ratio test (d'Alembert's criterion), is asymptotic to $\sum_{n=1}^\infty\|A^n g\|^{-\frac{1}{n}}$ and hence diverges because $g$ is quasi-analytic (see \eqref{def:quasianalytic-vectors} above). Then also $\sum_{n=1}^\infty\|A^nf\|^{-\frac{1}{n}}=+\infty$, whence the quasi-analyticity of $f$.

 (ii) As $Af=g$, then $A^{n-1} g=A^{n}f$ for any integer $n\geqslant 1$. By definition of $A$-analyticity of $g$ (see \eqref{def:analytic-vectors} above), there is $C_g>0$ such that
 \[
  \|A^n f\|\;=\;\|A^{n-1}g\|\;\leqslant\;C_g^{n-1}\,(n-1)!\;\leqslant\; D_f^n \,n!\,,\qquad n\geqslant 2\,,
 \]
 having set $D_f:=\max\{1,C_g\}$. The latter inequality is due to $D_f\geqslant C_g$, whence $D_f^{n-1}\geqslant C_g^{n-1}$, and to $D_f\geqslant 1$, whence $D_f^n\geqslant D_f^{n-1}$. As $\|Af\|=\|g\|$ (the $n=1$ case), then setting $C_f:=	\max\{\|g\|,D_f\}$ finally yields
 \[
  \|A^n f\|\;\leqslant\;C_f^n \,n!\,,\qquad n\geqslant 1\,,
 \]
 which in view of \eqref{def:analytic-vectors} expresses the $A$-analyticity of $f$.

 (iii) On account of part (ii), any solution $f$ belongs to $\mathcal{D}^a(A)$. Then in particular $P_\mathcal{S}f^{[0]}\in \mathcal{D}^a(A)$, and since $\mathcal{D}^a(A)$ is a linear subspace, the conclusion follows by linearity.
\end{proof}

\begin{lemma}\label{lem:Abeta-qa}
 Given $A=A^*\geqslant\mathbb{O}$,
 \begin{equation}\label{eq:qa-Abeta-invariant}
  A^{\beta}\mathcal{D}^{qa}(A)\;\subset\; \mathcal{D}^{qa}(A)\qquad \forall \beta\geqslant 0\,.
 \end{equation}
\end{lemma}

\begin{proof}
 We intend to apply this simple property (see, e.g., \cite[Lemma 7.17]{schmu_unbdd_sa}): 
\[\label{eq:propertystar}\tag{*}
\textrm{\begin{tabular}{p{10cm}}
if $S$ and $T$ are two densely defined operators with common domain $\mathcal{D}$ and such that $T\mathcal{D}\subset\mathcal{D}$, $S\mathcal{D}\subset\mathcal{D}$, and $TS=ST$ on $\mathcal{D}$, then $S\mathcal{D}^{qa}(T)\subset\mathcal{D}^{qa}(T)$.
\end{tabular}}
\]
 In the present case let us take
 \[
  S\;:=\;A^\beta\big|_{C^\infty(A)}\,,\qquad T\;:=\;A\big|_{C^\infty(A)}\,,\qquad \mathcal{D}\;:=\;C^\infty(A)\,.
 \]
 With this choice, obviously, $C^\infty(T)=C^\infty(A)$, whence also, owing to the definition \eqref{def:quasianalytic-vectors}, $\mathcal{D}^{qa}(T)=\mathcal{D}^{qa}(A)$. So, provided that all assumptions of \eqref{eq:propertystar} are matched, the conclusion $S\mathcal{D}^{qa}(T)\subset\mathcal{D}^{qa}(T)$ amounts precisely to \eqref{eq:qa-Abeta-invariant}. Concerning the assumptions of \eqref{eq:propertystar}, it is clear that both $T$ and $S$ are symmetric and densely defined, with common domain $\mathcal{D}$. The invariance properties $T\mathcal{D}\subset\mathcal{D}$ and $S\mathcal{D}\subset\mathcal{D}$ are tantamount as $A^\tau C^\infty(A)\subset C^\infty(A)$, respectively with $\tau=1$ and $\tau=\beta$, and in either case they follow from the fact that for every $h\in C^\infty(A)$ and any $k\in\mathbb{N}$, the vector $A^\tau h$ satisfies
 \[
  \begin{split}
   \|A^k A^\tau h\|^2\;&=\;\int_{[0,+\infty)}\!\lambda^{2(k+\tau)}\,\ud\mu_h^{(A)}(\lambda) \\
   &\leqslant\;\int_{[0,1)}\ud\mu_h^{(A)}(\lambda)+\int_{[1,+\infty)}\!\lambda^{2(k+\tau)}\,\ud\mu_h^{(A)}(\lambda) \\
   &\leqslant\;\|h\|^2+\|A^{k+\lceil \tau\rceil}h\|^2\;<\;+\infty\,,
  \end{split}
 \]
  where $\lceil \tau\rceil$ is the smallest integer greater than $\tau$. Last, the commutativity of $S$ and $T$ on $\mathcal{D}$ is obviously tantamount as $A A^\beta h=A^{1+\beta}h=A^\beta Ah$ for $h\in\mathcal{D}$. All assumptions of \eqref{eq:propertystar} are verified, and the Lemma is proved. 
\end{proof}

 On a related note, for completeness and later use, let us also recall this simple property.

 \begin{lemma}\label{lem:Aanalit-is-analit}
  For any operator $A$ on a Hilbert space $\cH$, $A\mathcal{D}^a(A)\subset \mathcal{D}^a(A)$
 \end{lemma}

 \begin{proof}
  Let $f\in \mathcal{D}^a(A)$ and $g:=Af$. Then
  \[
   \|A^n g\|\;=\;\|A^{n+1}f\|\;\leqslant\; C_f^{n+1}\,(n+1)!
  \]
  for some $C_f>0$ and for all $n\in\mathbb{N}_0$. Set $C_g:=2(\max\{C_f,1\})^2$ and take $n\in\mathbb{N}$. Then $C_f^{1+\frac{1}{n}}(1+n)^{\frac{1}{n}}\leqslant (\max\{C_f,1\})^2\cdot 2=C_g $, whence
  \[
   \|A^n g\|\;\leqslant\; C_f^{n+1}\,(n+1)!\;\leqslant\; C_g^n \,n!\qquad\forall n\in\mathbb{N}\,,
  \]
  which shows that $g\in \mathcal{D}^a(A)$.  
 \end{proof}

\begin{proof}[Proof of Proposition \ref{prop:properties_of_zeros}]
Part (i) is standard from the theory of orthogonal polynomials (see, e.g., \cite[Theorem 3.3.1]{Szego-OrthPolyBook} or \cite[Theorem 5.2]{Chihara-book-1978}), owing to the fact that the map
\[
 \mathbb{P}([0,+\infty))\,\ni p\;\longmapsto \int_{[0,+\infty)}\,p(\lambda)\,\ud\nu_{\xi}(\lambda)
\]
is a positive-definite functional on $\mathbb{P}([0,+\infty))$.

Part (ii) is another standard fact in the theory of orthogonal polynomials (see, e.g., \cite[Theorem 3.3.2]{Szego-OrthPolyBook} or \cite[Theorem I.5.3]{Chihara-book-1978}). Part (iii), in turn, is an immediate corollary of part (ii).

Part (iv) follows from the identity
\[
 \int_{[0,+\infty)}s_N(\lambda)\,q_{N-1}(\lambda) \, \mathrm{d}\nu_\xi(\lambda)\;=\;0\qquad \forall q_{N-1}\in\mathbb{P}_{N-1}
\]
(already considered in the proof of Proposition \ref{prop:sN}, as a consequence of the orthogonality of the $s_N$'s), when the explicit choice 
\[
 q_{N-1}(\lambda)\;:=\;\frac{\,\lambda_1^{(N)}\,s_N(\lambda)}{\lambda_1^{(N)}-\lambda}
\]
is made.

For Part (v) let us first recall \cite[Definition I.5.2]{Chihara-book-1978} that the true interval of orthogonality $[\lambda_1,\lambda_\infty]$ is the smallest closed interval containing all the zeroes $\lambda_k^{(N)}$, and moreover \cite[Theorem II.3.1]{Chihara-book-1978} there exists a measure $\eta$ on $[0,+\infty)$ supported only on $[\lambda_1,\lambda_\infty]$ such that the $s_N$'s remain orthogonal with respect to $\eta$ too and 
\[
 \mu_k\;:=\;\int_{[0,+\infty)}\lambda^k\,\ud\nu_{\xi}(\lambda)\;=\;\int_{[\lambda_1,\lambda_\infty]}	\lambda^k\,\ud\eta(\lambda)\,,\qquad \forall k\in\mathbb{N}_0\,.
\]
Such $\eta$-measure is actually a Stieltjes measure associated with a bounded, non-decreasing function $\psi$ obtained as point-wise limit of a sub-sequence of $(\psi_N)_{N\in\mathbb{N}}$, where
\[
 \psi_N(\lambda)\;:=\;
 \begin{cases}
  \;0\,, & \lambda<\lambda_1^{(N)}\,, \\
  A_1^{(N)}+\cdots+A_p^{(N)}\,, &\lambda\in[\lambda_p^{(N)},\lambda_{p+1}^{(N)})\quad\textrm{for}\quad p\in\{1,\dots, n-1\}\,, \\
  \;\mu_0\,, &\lambda\geqslant \lambda_N^{(N)}
 \end{cases}
\]
% the $\widehat{\lambda}_p^{(N)}$'s are the zeroes, in increasing order with $p$, of the corresponding monic system $(\widehat{s}_N)_{N\in\mathbb{N}}$ (see \eqref{eq:monic} above)
and $A_1^{(n)},\dots, A_N^{(n)}$ are positive numbers determined by the Gauss quadrature formula
\[
 \mu_k\;=\;\sum_{p=1}^N A_p^{(N)}(\lambda_p^{(N)})^k\,,\qquad \forall k\in\{0,1,\dots,2N-1\}\,.
\]
% Therefore,
% \[
%  \eta(\{\lambda_1\})\;=\;\psi(\lambda_1)-\lim_{\lambda\to\lambda_1^-}\psi(\lambda)\;=\;0\,,
% \]
% because by part (ii) and (iii) $\lambda_1<\lambda_1^{{N}}$ $\forall N\in\mathbb{N}$, whence
% %$\psi_N(\lambda)=0$ for $\lambda\leqslant\lambda_1$ and hence 
% $\psi(\lambda_1)=\lim_{N\to\infty}\psi_N(\lambda_1)=0$ and $\psi(\lambda)=\lim_{N\to\infty}\psi_N(\lambda)=0$ for $\lambda<\lambda_1$.

We want to show that $\nu_{\xi}=\eta$, i.e., that the Hamburger moment problem that guarantees that $(s_N)_{N\in\mathbb{N}}$ is an orthogonal system on $[0,+\infty)$ is \emph{uniquely} solved with the measure $\nu_{\xi}$. To this aim, let us re-write the \emph{even} moments of $\nu_{\xi}$ as
\[
 \begin{split}
   \mu_{2k}\;&=\;\int_{[0,+\infty)}\lambda^{2k}\lambda^{\xi+1}\,\ud\langle f^{[0]}-P_\mathcal{S}f^{[0]},E^{A}(\lambda)(f^{[0]}-P_\mathcal{S}f^{[0]})\rangle \\
   &=\;\big\|A^k A^{\frac{\xi+1}{2}}(f^{[0]}-P_\mathcal{S}f^{[0]})\big\|^2\;=\;\|A^k\phi\|^2\,,
 \end{split}
\]
 having set $\phi:=A^{\frac{\xi+1}{2}}(f^{[0]}-P_\mathcal{S}f^{[0]})$.
 The extra assumptions made for this part ensure that $f^{[0]}-P_\mathcal{S}f^{[0]}\in\mathcal{D}^a(A)$, on account of Lemma \ref{lem:fpPf-analitic}, or directly that $f^{[0]}-P_\mathcal{S}f^{[0]}\in\mathcal{D}^{qa}(A)$. As a consequence, owing to Lemma \ref{lem:Abeta-qa}, $\phi\in\mathcal{D}^{qa}(A)$. The quasi-analyticity of $\phi$ then implies (see \eqref{def:quasianalytic-vectors} above)
 \[
  \sum_{k=1}^\infty \mu_{2k}^{-\frac{1}{2k}}\;=\;\sum_{k=1}^\infty \|A^k\phi\|^{-\frac{1}{k}}\;=\;+\infty\,.
 \]
 Now, the divergence of the above series $\sum_{k=1}^\infty \mu_{2k}^{-\frac{1}{2k}}$ is a well-known sufficient condition (Carleman's criterion, see, e.g., \cite[Theorem I.10]{Shohat-Tamarkin_ProblemOfMoments1943}) for the uniqueness of the Hamburger moment problem's solution. 
 This shows that $\nu_{\xi}=\eta$, thus proving that $\nu_{\xi}$ is supported only on $[\lambda_1,\lambda_\infty]$.
\end{proof}

\begin{remark}\label{rem:implicit-xi-dependence-II}
 Analogously to what already observed in Remark \ref{rem:implicit-xi-dependence}, there is an implicit dependence on $\xi$ of all the zeroes $\lambda_k^{(N)}$. For a more compact notation, such a dependence is omitted.
\end{remark}

% \begin{remark}\label{rem:qa-enough}
%  As the proof of Proposition \ref{prop:properties_of_zeros}(v) shows, the same conclusion (determinacy of the Hamburger moment problem, whence the fact that $\nu_\xi$ is supported in $[\lambda_1,\lambda_\infty]$) would follow by simply assuming that $f^{[0]}-P_\mathcal{S}f^{[0]}$ is quasi-analytic for $A$.
% \end{remark}

In view of Proposition \ref{prop:properties_of_zeros}(i), when the $s_N$'s are not identically zero we can explicitly represent
\begin{equation}\label{eq:representation_sNhatsN}
   s_N(\lambda)\;=\;\prod_{k=1}^N\bigg(1-\frac{\lambda}{\;\lambda_k^{(N)}}\bigg)\,.
   %,\qquad \widehat{s}_N(\lambda)\;=\;\prod_{k=1}^N(\lambda-\lambda_k^{(N)})\,.
\end{equation}

The integral \eqref{eq:orthogonality} is going to play a central role in the main proof, so the next technical result we need is the following efficient estimate of such a quantity.

\begin{lemma}\label{lem:xisigmaestimate}
 Consider the set $(s_N)_{N\in\mathbb{N}}$ of orthogonal polynomials on $[0,+\infty)$ with respect to the measure $\nu_{\xi}$, as defined in \eqref{eq:defsN} and \eqref{eq:defnumeasure} under the assumptions of Proposition \ref{prop:sN} and with the further restriction $\xi-\sigma+1\geqslant 0$. Assume that the $s_N$'s are non-zero polynomials with respect to the measure $\nu_{\xi}$. Then, for any $N\in\mathbb{N}$,
 \begin{equation}\label{eq:xisigmaestimate}
 \int_{(\lambda_1,\lambda_1^{(N)})}s^2_N(\lambda)\,\frac{\lambda_1^{(N)}}{\lambda_1^{(N)}-\lambda}\,\ud\nu_{\xi}(\lambda)\;\leqslant\;\mu_\sigma((\lambda_1,\lambda_1^{(N)}))\,\Big(\frac{\xi-\sigma+1}{\delta_N}\Big)^{\xi-\sigma+1}\,,
 \end{equation}
 where
 \begin{equation}\label{eq:defdeltaN}
  \delta_N\;:=\;\frac{1}{\;\lambda_1^{(N)}}+2\sum_{k=2}^N\frac{1}{\;\lambda_k^{(N)}}\,.
 \end{equation}
\end{lemma}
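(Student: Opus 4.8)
The plan is to estimate the integral over $[0,\lambda_1^{(N)})$ by factoring $s_N^2(\lambda)$ according to its zeros and bounding each factor. Using the representation \eqref{eq:representation_sNhatsN}, on the interval $[0,\lambda_1^{(N)})$ I write
\[
 s_N^2(\lambda)\,\frac{\lambda_1^{(N)}}{\lambda_1^{(N)}-\lambda}\;=\;\Big(1-\frac{\lambda}{\lambda_1^{(N)}}\Big)^{\!2}\frac{\lambda_1^{(N)}}{\lambda_1^{(N)}-\lambda}\prod_{k=2}^N\Big(1-\frac{\lambda}{\lambda_k^{(N)}}\Big)^{\!2}\;=\;\Big(1-\frac{\lambda}{\lambda_1^{(N)}}\Big)\prod_{k=2}^N\Big(1-\frac{\lambda}{\lambda_k^{(N)}}\Big)^{\!2}.
\]
On $[0,\lambda_1^{(N)})$ every factor $\big(1-\lambda/\lambda_k^{(N)}\big)$ lies in $(0,1]$ (since $\lambda_1^{(N)}\le\lambda_k^{(N)}$ by the ordering \eqref{eq:EVordered}), so by the elementary inequality $0<t\le e^{t-1}$ for $t\in(0,1]$ applied to each factor, one gets the pointwise bound
\[
 s_N^2(\lambda)\,\frac{\lambda_1^{(N)}}{\lambda_1^{(N)}-\lambda}\;\leqslant\;\exp\!\Big(-\lambda\,\delta_N\Big)\qquad\text{on }[0,\lambda_1^{(N)}),
\]
where $\delta_N$ is exactly the quantity in \eqref{eq:defdeltaN} — the coefficient $1$ in front of $1/\lambda_1^{(N)}$ reflects the single linear factor from the $k=1$ term, while the coefficient $2$ in front of the other reciprocals reflects the squared factors.

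Next I pass from $\nu_{\xi}$ to $\mu_\sigma$ using the identity $\ud\nu_{\xi}(\lambda)=\lambda^{\xi-\sigma+1}\,\ud\mu_\sigma(\lambda)$ from \eqref{eq:defnumeasure}. This turns the left-hand side of \eqref{eq:xisigmaestimate} into
\[
 \int_{[0,\lambda_1^{(N)})}s^2_N(\lambda)\,\frac{\lambda_1^{(N)}}{\lambda_1^{(N)}-\lambda}\,\lambda^{\xi-\sigma+1}\,\ud\mu_\sigma(\lambda)\;\leqslant\;\int_{[0,\lambda_1^{(N)})}\lambda^{\xi-\sigma+1}\,e^{-\lambda\,\delta_N}\,\ud\mu_\sigma(\lambda),
\]
where the restriction $\xi-\sigma+1\ge 0$ is what makes $\lambda^{\xi-\sigma+1}$ a legitimate (bounded on compacts, measurable) multiplier and keeps all manipulations with the finite measure $\mu_\sigma$ valid. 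Now I bound the integrand uniformly: for $\alpha:=\xi-\sigma+1\ge 0$ and $\delta_N>0$, the function $\lambda\mapsto\lambda^{\alpha}e^{-\lambda\delta_N}$ on $[0,+\infty)$ attains its maximum at $\lambda=\alpha/\delta_N$, with maximal value $(\alpha/\delta_N)^{\alpha}e^{-\alpha}\le(\alpha/\delta_N)^{\alpha}$. (When $\alpha=0$ the bound is trivially $1$, consistent with reading $(\alpha/\delta_N)^{\alpha}=1$.) Pulling this constant out of the integral and using $\mu_\sigma\big([0,\lambda_1^{(N)})\big)=\int_{[0,\lambda_1^{(N)})}\ud\mu_\sigma$ yields exactly the right-hand side of \eqref{eq:xisigmaestimate} — in fact with the extra harmless factor $e^{-(\xi-\sigma+1)}$ which I simply discard.

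The only point requiring a moment's care — and the one I regard as the main (mild) obstacle — is the degenerate case $\delta_N=0$, which occurs precisely when $\lambda_1^{(N)}=+\infty$, i.e.\ when some $s_N$ is the zero polynomial $\nu_\xi$-a.e.; but that case is excluded by hypothesis, since we assume the $s_N$'s are non-zero with respect to $\nu_\xi$, so by Proposition \ref{prop:properties_of_zeros}(i) all zeros $\lambda_k^{(N)}$ are finite and strictly positive and hence $\delta_N\in(0,+\infty)$. Beyond that, one should note the pointwise exponential bound and the subsequent maximisation are entirely elementary calculus, and the interchange of supremum-bound with integration is justified because $\mu_\sigma$ is a finite measure; no subtlety about the behaviour near $\lambda=\lambda_1^{(N)}$ arises because the factor $\lambda_1^{(N)}/(\lambda_1^{(N)}-\lambda)$ was already absorbed against one of the $(1-\lambda/\lambda_1^{(N)})^2$ factors before taking any bounds.
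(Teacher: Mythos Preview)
Your proof is correct and follows the same overall strategy as the paper: rewrite the integrand using the zero-factorisation \eqref{eq:representation_sNhatsN} to cancel the apparent singularity at $\lambda_1^{(N)}$, pass from $\nu_\xi$ to $\mu_\sigma$ via \eqref{eq:defnumeasure}, bound the resulting function
\[
 a_N(\lambda)\;=\;\lambda^{\xi-\sigma+1}\Big(1-\frac{\lambda}{\lambda_1^{(N)}}\Big)\prod_{k\geqslant 2}\Big(1-\frac{\lambda}{\lambda_k^{(N)}}\Big)^{\!2}
\]
pointwise on $[0,\lambda_1^{(N)})$, and pull the bound out of the $\mu_\sigma$-integral. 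The only difference is in how the pointwise bound is obtained: the paper locates the critical point $\lambda_N^*$ of $a_N$ directly, shows from the vanishing of the logarithmic derivative that $\lambda_N^*\leqslant(\xi-\sigma+1)/\delta_N$, and then uses $a_N(\lambda_N^*)\leqslant(\lambda_N^*)^{\xi-\sigma+1}$; you instead apply $(1-x)\leqslant e^{-x}$ factor-by-factor to reduce $a_N$ to the explicit majorant $\lambda^{\alpha}e^{-\lambda\delta_N}$, whose maximum $(\alpha/\delta_N)^{\alpha}e^{-\alpha}$ is read off immediately. Your route is marginally cleaner and even yields the harmless extra factor $e^{-(\xi-\sigma+1)}$; the paper's route avoids the exponential detour at the cost of a short critical-point computation. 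Both are elementary and equivalent in strength.
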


\begin{remark}
 Estimate \eqref{eq:xisigmaestimate} provides a $(\xi,\sigma)$-dependent bound on a quantity that is $\xi$-dependent only. This is only possible for a constrained range of $\sigma$, namely $\sigma\leqslant\xi+1$. 
\end{remark}

\begin{proof}[Proof of Lemma \ref{lem:xisigmaestimate}]
 For each $N\in\mathbb{N}$, the function 
  \[
  \begin{split}
  [0,\lambda_1^{(N)}]\;\ni\;\lambda\;\longmapsto\;a_N(\lambda)\;:=&\;\:\frac{\:\lambda_1^{(N)}\lambda^{\xi-\sigma+1}s_N^2(\lambda)\,}{\lambda_1^{(N)}-\lambda} \\
  =&\;\:\lambda^{\xi-\sigma+1}\bigg(1-\frac{\lambda}{\;\lambda_1^{(N)}}\bigg)\prod_{k=2}^N\bigg(1-\frac{\lambda}{\;\lambda_k^{(N)}}\bigg)^{\!2}
  \end{split}
 \]
 (where we used the representation \eqref{eq:representation_sNhatsN} for $s_N$) is non-negative, smooth, and such that $a_N(0)=a_N(\lambda_1^{(N)})=0$. Let $\lambda_N^*\in(0,\lambda_1^{(N)})$ be the point of maximum for $a_N$. Then $a_N'(\lambda_N^*)=0$, which after straightforward computations yields
 \[
  \xi-\sigma+1\;\geqslant\;\lambda_N^*\bigg(\frac{1}{\;\lambda_1^{(N)}}+2\sum_{k=2}^N\frac{1}{\;\lambda_k^{(N)}}\bigg)\;=\;\lambda_N^*\delta_N\,,
 \]
 whence also
 \[
  \lambda_N^*\;\leqslant\;\frac{\xi-\sigma+1}{\delta_N}\,.
 \]
 %Here, by assumption, $\xi-\sigma+1\geqslant 1$. 
 Moreover, $0\leqslant 1-\lambda/\lambda_k^{(N)}\leqslant 1$ for $\lambda\in[0,\lambda_1^{(N)}]$ and for all $k\in\{1,\dots,N\}$, as $\lambda_1^{(N)}$ is the smallest zero of $s_N$. Therefore,
 \[
  a_N(\lambda)\;\leqslant\;a_N(\lambda_N^*)\;\leqslant\;(\lambda_N^*)^{\xi-\sigma+1}\;\leqslant\;\Big(\frac{\xi-\sigma+1}{\delta_N}\Big)^{\xi-\sigma+1}\,,\qquad \lambda\in[0,\lambda_1^{(N)}]\,.
 \]
 We then conclude
 \[
 \begin{split}
  \int_{(\lambda_1,\lambda_1^{(N)})}s^2_N(\lambda)\,\frac{\lambda_1^{(N)}}{\lambda_1^{(N)}-\lambda}\,\ud\nu_{\xi}(\lambda)\;&=\;\int_{(\lambda_1,\lambda_1^{(N)})}a_N(\lambda)\,\ud\mu_{\sigma}(\lambda) \\
  &\leqslant\;\mu_\sigma((\lambda_1,\lambda_1^{(N)}))\,\Big(\frac{\xi-\sigma+1}{\delta_N}\Big)^{\xi-\sigma+1}\,,
 \end{split}
 \]
 which completes the proof. 
\end{proof}

\section{Proof of Theorem \ref{thm:main} and additional observations}\label{sec:mainproof}

Let us present in this Section the proof of our main statements, Theorem \ref{thm:main} and Corollary \ref{cor:main}, based on the intermediate results established in the previous Section.

Owing to Proposition \ref{prop:sN}, we have to control the behaviour for large $N$ of the quantity
 \[
 \rho_\sigma(f^{[N]})\;=\;\int_{[0,+\infty)}s_N^2(\lambda)\,\ud\mu_\sigma(\lambda)\,.
 \]

Obviously, in the following we assume that none of the polynomials $s_N$ vanish with respect to the measure $\nu_{\xi}$ previously introduced in \eqref{eq:defnumeasure}, for otherwise for some $N$ one would have $\rho_\sigma(f^{[N]})=0$ and therefore $f^{[N]}=P_\mathcal{S}f^{[0]}\in\mathcal{S}$ (see Remark \ref{rem:finite_steps}, or also \eqref{eq:fNwithsigmaN}), meaning that the conjugate gradient algorithm has come to convergence in a finite number of steps. The conclusion of Theorem \ref{thm:main} would then be trivially true.

Let us first observe, from the relation \eqref{eq:defnumeasure} between the measures $\mu_\sigma$ and $\nu_{\xi}$ and from the fact that the latter is supported on the true interval of orthogonality $[\lambda_1,\lambda_\infty]$ (Proposition \ref{prop:properties_of_zeros}(v)), that the measure $\mu_\sigma$ too is supported on such an interval. Thus, in practice,
 \begin{equation}\label{eq:inpractice}
 \rho_\sigma(f^{[N]})\;=\;\int_{[\lambda_1,\lambda_\infty]}s_N^2(\lambda)\,\ud\mu_\sigma(\lambda)\,.
 \end{equation}
(Let us recall that $[\lambda_1,\lambda_\infty]$ is a shorthand for the closure of $(\lambda_1,\lambda_\infty)$, even when $\lambda_\infty=+\infty$.)

It is convenient to split
\begin{equation}\label{eq:est0}
 \begin{split}
  \int_{[\lambda_1,\lambda_\infty]}&s_N^2(\lambda)\,\ud\mu_\sigma(\lambda)\;= \;\mu_\sigma(\{\lambda_1\}) s_N^2(\lambda_1)+\int_{(\lambda_1,\lambda_1^{(N)})}s_N^2(\lambda)\,\ud\mu_\sigma(\lambda)\\
  &\qquad\qquad\qquad+\int_{[\lambda_1^{(N)},+\infty)}s_N^2(\lambda)\,\ud\mu_\sigma(\lambda) \\
  &\leqslant\;\mu_\sigma(\{\lambda_1\}) s_N^2(\lambda_1)+\mu_\sigma((\lambda_1,\lambda_1^{(N)}))+\int_{[\lambda_1^{(N)},+\infty)}s_N^2(\lambda)\,\ud\mu_\sigma(\lambda)\,.
 \end{split}
\end{equation}
Here we used the bound $s_N^2(\lambda)\leqslant 1$, $\lambda\in[0,\lambda_1^{(N)})$, that is obvious from \eqref{eq:representation_sNhatsN}.

Next, let us show that
\begin{equation}\label{eq:est1}
 \int_{[\lambda_1^{(N)},+\infty)}s_N^2(\lambda)\,\ud\mu_\sigma(\lambda)\;\leqslant\;\frac{1}{\:(\lambda_1^{(N)})^{\xi-\sigma+1}}\int_{[0,\lambda_1^{(N)})}s^2_N(\lambda)\,\frac{\lambda_1^{(N)}}{\lambda_1^{(N)}-\lambda}\,\ud\nu_{\xi}(\lambda)\,.
\end{equation}

In fact, \eqref{eq:est1} is a consequence of the properties of $s_N$ discussed in Section \ref{sec:technical}. To see that, let us consider the inequality
\begin{equation}\label{eq:newest}
 \begin{split}
  1\;&\leqslant\;\Big(\frac{\lambda}{\:\lambda_1^{(N)}}\Big)^{\xi-\sigma}\;=\;\frac{1}{\:(\lambda_1^{(N)})^{\xi-\sigma+1}}\cdot\frac{\:\lambda_1^{(N)}}{\lambda}\cdot\lambda^{\xi-\sigma+1} \\
  &\leqslant\;\frac{1}{\:(\lambda_1^{(N)})^{\xi-\sigma+1}}\cdot\frac{\lambda_1^{(N)}}{\lambda-\lambda_1^{(N)}}\cdot\lambda^{\xi-\sigma+1}\qquad\qquad(\lambda\geqslant\lambda_1^{(N)})\,,
 \end{split}
\end{equation}
which is valid owing to the constraint $\xi-\sigma\geqslant 0$. Then,
% \[
%  \begin{split}
%   \int_{[\lambda_1^{(N)},+\infty)}s_N^2(\lambda)\,\ud\mu_\sigma(\lambda)\;&\leqslant\;\frac{1}{\:(\lambda_1^{(N)})^{\xi-\sigma+1}}\int_{[\lambda_1^{(N)},+\infty)}\lambda^{\xi-\sigma+1}s_N^2(\lambda)\,\ud\mu_\sigma(\lambda) \\
%   &=\;\frac{1}{\:(\lambda_1^{(N)})^{\xi-\sigma+1}}\int_{[\lambda_1^{(N)},+\infty)}s_N^2(\lambda)\,\ud\nu_{\xi}(\lambda)\,,
%  \end{split}
% \]
% where in the first step we used that $\lambda/\lambda_1^{(N)}\geqslant 1$ for $\lambda\geqslant \lambda_1^{(N)}$, and that $\xi-\sigma+1\geqslant 0$, whereas in the second step we used \eqref{eq:defnumeasure}.
% 
% 
% On the other hand,
% 
% 
% 
\[
 \begin{split}
  \int_{[\lambda_1^{(N)},+\infty)}s_N^2(\lambda)\,\ud\mu_{\sigma}(\lambda)\;&\leqslant\;\frac{1}{\:(\lambda_1^{(N)})^{\xi-\sigma+1}}\int_{[\lambda_1^{(N)},+\infty)}s^2_N(\lambda)\,\frac{\lambda_1^{(N)}}{\lambda-\lambda_1^{(N)}}\,\ud\nu_{\xi}(\lambda) \\
  &=\;\frac{1}{\:(\lambda_1^{(N)})^{\xi-\sigma+1}}\int_{[0,\lambda_1^{(N)})}s^2_N(\lambda)\,\frac{\lambda_1^{(N)}}{\lambda_1^{(N)}-\lambda}\,\ud\nu_{\xi}(\lambda)\,,
 \end{split}
\]
having used \eqref{eq:defnumeasure} and \eqref{eq:newest} in the first step, and the orthogonality property \eqref{eq:orthogonality} in the second. Estimate \eqref{eq:est1} is thus proved.

In turn, from \eqref{eq:est1} one gets
\begin{equation}\label{eq:estnew1}
 \begin{split}
    \int_{[\lambda_1^{(N)},+\infty)}&s_N^2(\lambda)\,\ud\mu_\sigma(\lambda)\;\leqslant\;\frac{\,\lambda_1^{(N)}s^2_N(\lambda_1)\,}{\lambda_1^{(N)}-\lambda_1}\,\frac{\nu_{\xi}(\{\lambda_1\})}{\:(\lambda_1^{(N)})^{\xi-\sigma+1}} \\
    &\qquad\qquad +  \frac{1}{\:(\lambda_1^{(N)})^{\xi-\sigma+1}}\int_{(\lambda_1,\lambda_1^{(N)})}s^2_N(\lambda)\,\frac{\lambda_1^{(N)}}{\lambda_1^{(N)}-\lambda}\,\ud\nu_{\xi}(\lambda)\\
    &\leqslant \;\frac{\,\lambda_1^{(N)}s^2_N(\lambda_1)\,}{\lambda_1^{(N)}-\lambda_1}\,\mu_\sigma(\{\lambda_1\})+\Big(\frac{\xi-\sigma+1}{\lambda_1^{(N)}\delta_N}\Big)^{\xi-\sigma+1}\mu_\sigma((\lambda_1,\lambda_1^{(N)})) \\
    &\leqslant\;\frac{\,\lambda_1^{(N)}s^2_N(\lambda_1)\,}{\lambda_1^{(N)}-\lambda_1}\,\mu_\sigma(\{\lambda_1\})+(\xi-\sigma+1)^{\xi-\sigma+1}\mu_\sigma((\lambda_1,\lambda_1^{(N)}))\,,
 \end{split}
\end{equation}
where in the intermediate identity we used \eqref{eq:defnumeasure} to pass from $\nu_{\xi}$ to $\mu_\sigma$ and we applied Lemma \ref{lem:xisigmaestimate}, and in the final inequality we used the property $\lambda_1^{(N)}\delta_N\geqslant 1$ (following from \eqref{eq:defdeltaN}).

Thus, \eqref{eq:inpractice}, \eqref{eq:est0} and \eqref{eq:estnew1} yield
\begin{equation*}
\begin{split}
 \rho_\sigma(f^{[N]})\;&\leqslant\;\left(s^2_N(\lambda_1)+ \frac{\,\lambda_1^{(N)}s^2_N(\lambda_1)\,}{\lambda_1^{(N)}-\lambda_1}\right)\mu_\sigma(\{\lambda_1\}) \\
 &\qquad\qquad +\left(1+(\xi-\sigma+1)^{\xi-\sigma+1} \right)\mu_\sigma((\lambda_1,\lambda_1^{(N)}))\,,
\end{split}
\end{equation*}
whence also, using the factorisation \eqref{eq:representation_sNhatsN} for $s_N$,
\begin{equation}\label{eq:est3}
\begin{split}
 \rho_\sigma(f^{[N]})\;&\leqslant\;2\bigg(1-\frac{\lambda_1}{\lambda_1^{(N)}}\bigg)\prod_{k=2}^N\bigg(1-\frac{\lambda_1}{\;\lambda_k^{(N)}}\bigg)^2\mu_\sigma(\{\lambda_1\}) \\
 &\qquad\qquad +\left(1+(\xi-\sigma+1)^{\xi-\sigma+1} \right)\mu_\sigma((\lambda_1,\lambda_1^{(N)}))\,.
\end{split}
\end{equation}

In the right-hand side of \eqref{eq:est3} one has $\mu_\sigma((\lambda_1,\lambda_1^{(N)}))\xrightarrow[]{\;N\to\infty\;}0$. Moreover, depending on the value of $\lambda_1$, the quantity
\[
 \bigg(1-\frac{\lambda_1}{\lambda_1^{(N)}}\bigg)\prod_{k=2}^N\bigg(1-\frac{\lambda_1}{\;\lambda_k^{(N)}}\bigg)^2\mu_\sigma(\{\lambda_1\})
\]
either attains at every $N$ the value $\mu_\sigma(\{0\})$, if $\lambda_1=0$, and hence vanishes, owing to \eqref{eq:noatomatzero} from Proposition \ref{prop:measuremu}, or in general is bounded by
\[
 \frac{\lambda_1^{(N)}-\lambda_1}{\lambda_1^{(N)}}\|u_\sigma(f^{[0]})\|^2\,,
\]
owing to \eqref{eq:mu-norm} and to the ordering $0<\lambda_1^{(N)}<\lambda_2^{(N)}<\cdots<\lambda_N^{(N)}$ and $\lambda_1\leqslant\lambda_1^{(N)}$, and hence when $\lambda_1>0$ it vanishes in the limit $N\to\infty$.

In either case one concludes from \eqref{eq:est3} that $\rho_\sigma(f^{[N]})\xrightarrow[]{\;N\to\infty\;} 0$, thus completing the proof of Theorem \ref{thm:main}, and also of Corollary \ref{cor:main}, as our crucial Proposition \ref{prop:properties_of_zeros} is proved under the assumptions of either of them.

In the second part of this Section, we intend to highlight a number of important observations.

\begin{remark}
We see that the assumption that none of the polynomials $s_N$ vanish with respect to the measure $\nu_\xi$ in the proof of Theorem~\ref{thm:main} immediately excludes the possibility that $\lambda_1^{(N)} = \lambda_1$ for any $N \in \N$ by considerations in Proposition~\ref{prop:properties_of_zeros}(ii). Clearly $\lambda_1^{(N)} \neq 0$ for any $N \in \N$ too owing to Proposition~\ref{prop:properties_of_zeros}(i).
\end{remark}

\begin{remark}
 In retrospect, the assumption $\xi\geqslant\sigma$ was necessary to establish the bound \eqref{eq:est1} -- more precisely, the inequality \eqref{eq:newest}. In the step \eqref{eq:est3} (which is an application of Lemma \ref{lem:xisigmaestimate}), only the less restrictive assumption $\xi\geqslant\sigma-1$ was needed.
\end{remark}

\begin{remark}\label{rem:caseIcaseII}
Where exactly the true interval of orthogonality lies within $[0,+\infty)$ depends on the behaviour of the zeroes of the $s_N$'s. In particular, in terms of the quantity $\delta_N$ defined in \eqref{eq:defdeltaN} we distinguish two alternative scenarios:
\begin{itemize}
 \item[] \textsc{Case I:} $\delta_N\to\infty$ as $N\to\infty$;
 \item[] \textsc{Case II:} $\delta_N$ remains uniformly bounded, strictly above $0$, in $N$.
\end{itemize}

If the operator $A$ is bounded, then we are surely in Case I: indeed the orthogonal polynomials $s_N$ are defined on $\sigma(A)\subset[0,\|A\|]$, and their zeroes cannot exceed $\|A\|$: this forces $\delta_N$ to blow up with $N$. Moreover, $\lambda_\infty=\lim_{N\to\infty}\lambda_N^{(N)}<+\infty$.

If instead $A$ is unbounded, the $\lambda_k^{(N)}$'s fall in $[0,+\infty)$ and depending on their rate of possible accumulation at infinity $\delta_N$ may still diverge as $N\to\infty$ or stay bounded.

Clearly in Case II one has $\lambda_1>0$ and $\lambda_N=+\infty$, for otherwise the condition $\lambda_1=\lim_{N\to\infty}\lambda_1^{(N)}=0$ or $\lambda_\infty=\lim_{N\to\infty}\lambda_N^{(N)}<+\infty$ would necessarily imply $\delta_N\to +\infty$. Thus, in Case II the true interval of orthogonality is $[\lambda_1,+\infty)$ and it is separated from zero.
\end{remark}

\begin{remark} 
 Estimate \eqref{eq:est3}  in the proof and the reasoning thereafter show that the vanishing rate of $\rho_\sigma(f^{[N]})$ is actually controlled by the vanishing rate of the quantity $\mu_\sigma((\lambda_1,\lambda_1^{(N)}))$ if $\lambda_1=0$, or more generally of both quantities $(\lambda_1^{(N)}-\lambda_1)$ and $\mu_\sigma((\lambda_1,\lambda_1^{(N)}))$ if $\lambda_1>0$.  It is however unclear how to possibly quantify, in the above senses, the pace of $\lambda_1^{(N)}\to\lambda_1$. Let us recall (see Remark \ref{rem:Abdd} and \eqref{eq:Nem-conv-rate} in particular) that the Nemi\-rovskiy-Polyak analysis \cite{Nemirovskiy-Polyak-1985} for the bounded-$A$ case provides an explicit vanishing rate for $\rho_{\sigma'}(f^{[N]})$ for any $\sigma'\in(\sigma,\xi]$, based on a polynomial min-max argument that relies crucially on the \emph{finiteness} of the interval where the orthogonal polynomials $s_N$ are supported on (i.e., it relies on the boundedness of $\sigma(A)$). Therefore, there is certainly no room for applying the same argument to the present setting. In fact, we find it reasonable to expect that for generic (unbounded) $A$ the quantity $\rho_\sigma(f^{[N]})$ vanishes with arbitrarily slow pace depending on the choice of the initial guess $f^{[0]}$. A strong indication in this sense comes from the numerical tests discussed in Section \ref{sec:num_test}. 
\end{remark}

%% TENTATIVELY LET ME EXPUNGE THIS REMARK
% \begin{remark}
% It is worth mentioning that the convergence phenomena explained in Theorem~\ref{thm:main} for \emph{the} conjugate gradient method, i.e., precisely the case that the $f^{[N]}$'s are the $1$-iterates, guarantees convergence of the \emph{error} term $\rho_0(f^{[N]})= \|f^{[N]} - P_\mathcal{S} f^{[0]}\|^2$, however it may still happen that the \emph{residual} term $\rho_2(f^{[N]}) = \|A f^{[N]} - g\|^2 = \|\|\mathfrak{R}_N\|^2$ diverges. Therefore in the more general setting that $\xi < 2$, we do not a-priori have convergence guaranteed in the graph norm of the operator $A$.
% 
% This is lack of graph norm convergence is from a combination of the subtle restriction that $\sigma \leqslant \xi$ in the proof of Theorem~\ref{thm:main}, along with the fact that in Lemma~\ref{lem:Abdd_Etheta} point (iii) only holds when $A$ is bounded. In other words, due to the possible unboundedness of $A$, we are unable to control the convergence in a higher regularity than $\sigma = \xi$($< 2$). 
% 
% Of course then, to have a guaranteed control of the residual convergence we must pick $\xi \geqslant 2$. In the special case of $\xi = 2$, this corresponds to a \emph{residual minimisation} scheme at each step $N$.
% \end{remark}

\begin{remark}\label{rem:unif-bdd}
 It is worth pointing out that removing from the hypotheses of Theorem \ref{thm:main} (respectively Corollary \ref{cor:main}) the $A$-analyticity of $g$ and $f^{[0]}$ (respectively, the quasi-analyticity assumptions \eqref{eq:maincond3} or \eqref{eq:maincond4}) and replacing it with just the minimal assumption of $A$-smoothness, one could have only come to the (still non-trivial, yet not-informative) conclusion that $\rho_\sigma(f^{[N]})\leqslant\kappa$ uniformly in $N$ for some $\kappa>0$. This is seen as follows. For sure, even if the moment problem for $\nu_\xi$ is indeterminate, the measure $\nu_\xi$ has some support within $[\lambda_1,\lambda_\infty]$ (see, e.g., \cite[Theorem II.3.2]{Chihara-book-1978}), and so does $\mu_\sigma$. However, in the lack of the information that $\mu_\sigma$ is \emph{only} supported in $[\lambda_1,\lambda_\infty]$, in the above proof one should additionally estimate, besides the vanishing quantity \eqref{eq:est0}, the extra term
 \[
   \int_{[0,\lambda_1)}s_N^2(\lambda)\,\ud\mu_\sigma(\lambda)\,.
 \]
 On account of the inequalities $\lambda_1\leqslant\lambda_1^{(N)}$ (Proposition \ref{prop:properties_of_zeros}(iii)) and $s_N(\lambda)\leqslant 1$ $\forall\lambda\in[0,\lambda_1)$ (representation \eqref{eq:representation_sNhatsN}), the above integral is controlled by $\int_{[0,\lambda_1)}\ud\mu_\sigma(\lambda)$, and is therefore bounded uniformly in $N$. 
\end{remark}

\begin{remark}[\textbf{Comparison with the proof of \cite{Nemirovskiy-Polyak-1985} valid for bounded $A$}]\label{rem:comparison1}~

\noindent Our proof generalises the Nemi\-rovskiy-Polyak analysis \cite{Nemirovskiy-Polyak-1985} with a crucial technical novelty that is necessary when $A$ is unbounded, and in fact it also yields a subtle improvement of the old argument for the bounded case.
More precisely, in \cite{Nemirovskiy-Polyak-1985} one does \emph{not} make use of the very useful property that $\mu_\sigma$ is only supported on $[\lambda_1,\lambda_\infty]$,  which is the outcome of the somewhat laborious path that led to Proposition \ref{prop:properties_of_zeros}(v) here.
The sole measure-theoretic information used in \cite{Nemirovskiy-Polyak-1985} is that $\lambda=0$ is not an atom for $\mu_\sigma$. Then in \cite{Nemirovskiy-Polyak-1985}, instead of naturally splitting the integration as in \eqref{eq:est0} above, one separates the small and the large spectral values at a threshold $\gamma_N=\min\{\lambda_1^{(N)},\delta_N^{-1/2}\}$. Clearly $\gamma_N\to 0$, because $\delta_N\to +\infty$ since  $A$ is bounded (see Remark \ref{rem:caseIcaseII} above), and through a somewhat lengthy analysis of the integration for $\lambda<\gamma_N$ and $\lambda\geqslant\gamma_N$ one reduces both integrations to one over $[0,\gamma_N)$. Then one finally pulls out the upper bound $\mu_\sigma([0,\gamma_N))$, which vanishes as $N\to\infty$ precisely because $\mu_\sigma$ is atom-less at $\lambda=0$.
In the unbounded case such a scheme cannot work: $\delta_N$ does not necessarily diverge and only the information that $\mu_\sigma$ is supported at the right of, and possibly at, $\lambda_1$ makes the final estimate meaningful. Furthermore, in retrospect, by splitting the integration as in \eqref{eq:est0} and not in the old manner of \cite{Nemirovskiy-Polyak-1985}, our proof shortens the overall argument and applies both to the bounded and to the unbounded case, with no need to introduce the $\gamma_N$ cut-off.
% Another point of difference between our result and Nemirovskiy and Polyak's result, is that from controlling the convergence using the continuity properties of the measure, we are able to move from the strict inequality $\sigma < \xi$ to the situation $\sigma \leqslant \xi$.
\end{remark}

\begin{remark}[\textbf{Continuation: comparison with subsequent surveys of \cite{Nemirovskiy-Polyak-1985}}]\label{rem:comparison2} The analysis of conjugate gradients in the bounded case is nicely revisited by Hanke in the monograph \cite{Hanke-ConjGrad-1995}, both by presenting a version of the same Nemi\-rovskiy-Polyak $\gamma_N$-argument \cite{Nemirovskiy-Polyak-1985}, and by relying on a dominated convergence argument for a choice of a sequence of polynomials that vanishes point-wise over $(0,1]$ or on a Banach-Steinhaus uniform boundedness argument. For obvious reasons, none of such schemes are exportable to the present unbounded setting by merely updating the assumption of $g$ and $f^{[0]}$ so as to be $A$-smooth: in order to deal with a measure supported on $[\lambda_1,\lambda_\infty]$, an interval that is possibly infinite and separated from zero, the additional measure-theoretic analysis of Proposition \ref{prop:properties_of_zeros} is needed. (One should not be misled when in \cite{Hanke-ConjGrad-1995} certain spectral integrals appear to run over the whole positive half-line: it is clear from the discussion therein that the zeroes of the considered orthogonal polynomials only fall within a \emph{bounded} interval.) Of course, as commented already, in the present generalised scheme one pays the price that any quantitative bound on the rate of convergence is lost.
\end{remark}

We conclude this Section with one further important fact we had alluded to in Remark \ref{rem:generic-regime}.

\begin{proposition}\label{prop:generic-regime}
 Let $\xi\geqslant 0$, and with respect to the Hilbert space $\cH=L^2(\mathbb{R},\ud x)$ let $A$ be the self-adjoint multiplication by $x^2$, and let $g:=x^2 f$ with
 \begin{equation}
  f(x)\;:=\;\frac{\sqrt{2}\,\mathbf{1}_{\mathbb{R}^+}(x)}{\,x^{\frac{3}{2}+\xi} (2\pi)^{\frac{1}{4}}}\,e^{-\frac{1}{4}(\log x^2)^2}\,.
 \end{equation}
 Set further $f^{[0]}\equiv 0$. Then:
  \begin{itemize}
   \item[(i)] $A$ is non-negative and $g\in C^\infty(A)\cap\ran(A)$;
   \item[(ii)] Neither $f$ nor $g$ are quasi-analytic for $A$;
   \item[(iii)] $P_\mathcal{S}f^{[0]}=f$;
   \item[(iv)] the measure $\nu_\xi$ defined in \eqref{eq:defnumeasure} is a log-normal distribution, i.e.,
   \[
    \ud\nu_{\xi}(\lambda)\;=\;\frac{\mathbf{1}_{\mathbb{R}^+}(\lambda)}{\,\lambda \sqrt{2\pi}\,}\,e^{-\frac{1}{2}(\log \lambda)^2}\,\ud\lambda\,,\qquad \lambda\geqslant 0\,.
   \]
   \item[(v)] The Hamburger moment problem for $\nu_\xi$ is indeterminate.
  \end{itemize}
\end{proposition}

Proposition \ref{prop:generic-regime} shows that in general, when $g$ and $f^{[0]}$ are only assumed to be $A$-smooth, and the vector $f^{[0]}-P_\mathcal{S}f^{[0]}$ is not necessarily quasi-analytic for $A$, the measure $\nu_\xi$ may fail to be supported \emph{entirely} in the true interval of orthogonality $[\lambda_1,\lambda_\infty]$. (Moreover, let us recall -- see, e.g., \cite[Exercise II.5.7]{Chihara-book-1978} -- that in the lack of unique solution to the moment problem discussed in the proof of Proposition \ref{prop:properties_of_zeros}, at least one representative measure has a part of its support \emph{outside} $[\lambda_1,\lambda_\infty]$.) As a consequence, the quantity $\rho_\sigma(f^{[N]})$, while staying uniformly bounded (Remark \ref{rem:unif-bdd}) is not guaranteed to vanish as $N\to\infty$.

\begin{proof}[Proof of Proposition \ref{prop:generic-regime}]
 The facts that $A\geqslant\mathbb{O}$ and $g\in\ran(A)$ (provided that $f\in\mathcal{D}(A)$) are obvious. Let us prove that $f\in C^\infty(A)$ (whence $f\in\mathcal{D}(A)$ and $g\in C^\infty(A)$). For $n\in\mathbb{N}_0$, and with the change of variable $y:=\log x^2$, we compute 
 \[
  \begin{split}
   \|A^n f\|_{L^2}^2\;&=\;\|x^{2n}f\|_{L^2}\;=\;\frac{2}{\sqrt{2\pi}}\int_0^{+\infty} x^{4n-3-2\xi}\,\,e^{-\frac{1}{2}(\log x^2)^2}\,\ud x \\
   &=\;\frac{1}{\sqrt{2\pi}}\int_\mathbb{R} e^{(2n-1-\xi)y}\,e^{-\frac{1}{2} y^2}\,\ud y\;=\;e^{\frac{1}{2}(2n-1-\xi)^2}\;<\;+\infty\,.
  \end{split}
 \]
 Thus, $f\in C^\infty(A)$ and part (i) is proved.

 From the latter computation we also find
 \[
  \sum_{n\in\mathbb{N}}\|A^n f\|_{L^2}^{-\frac{1}{n}}\;=\;\sum_{n\in\mathbb{N}}e^{-\frac{1}{4n}(2n-1-\xi)^2}\;<\;+\infty\,,
 \]
 whence $f\notin\mathcal{D}^{qa}(A)$ (on account of definition \eqref{def:quasianalytic-vectors}), and also $g\notin\mathcal{D}^{qa}(A)$. Thus, (ii) is proved.
 
 Part (iii) follows from $Af=g$ and from the injectivity of $A$.
 
 Concerning part (iv), let us observe first of all that the spectral measure of $A$ is only supported on $\sigma(A)=[0,+\infty)$, and moreover (see, e.g., \cite[Example 5.3]{schmu_unbdd_sa}), the spectral projections $E^{(A)}(\Omega)$, for given Borel subset $\Omega\subset[0,+\infty)$, are nothing but the multiplication operators by the characteristic functions $\mathbf{1}_{a^{-1}(\Omega)}$, where $x\mapsto a(x):=x^2$. Thus,
 \[
  \begin{split}
  \langle f,E^{A}(\Omega)f\rangle\;&=\;\int_{a^{-1}(\Omega)}|f(x)|^2\,\ud x\;=\;\int_{\{\lambda\in\mathbb{R}\,|\,\lambda^2\in\Omega\}}|f(x)|^2\,\ud x \\
  &=\;\int_{\{\sqrt{\lambda}\,|\,\lambda\in\Omega\}}|f(x)|^2\,\ud x+\int_{\{-\sqrt{\lambda}\,|\,\lambda\in\Omega\}}|f(x)|^2\,\ud x\,.
  \end{split}
 \]
 Therefore, with $\Omega=[0,\lambda]$ and $E^{(A)}(\lambda)\equiv E^{(A)}([0,\lambda])$ for $\lambda\geqslant 0$,
 \[
  \begin{split}
   \frac{\ud \langle f,E^{A}(\lambda)f\rangle}{\ud \lambda}\;&=\;\frac{\ud}{\ud\lambda}\int_{-\sqrt{\lambda}}^{\sqrt{\lambda}}|f(\lambda)|^2\ud \lambda\;=\;\frac{1}{\,2\sqrt{\lambda}\,}\big|f(\sqrt{\lambda})\big|^2+\frac{1}{\,2\sqrt{\lambda}\,}\big|f(-\sqrt{\lambda})\big|^2 \\
   &=\;\frac{1}{\,2\sqrt{\lambda}\,}\big|f(\sqrt{\lambda})\big|^2\;=\;\frac{\mathbf{1}_{\mathbb{R}^+}(\lambda)}{\,\lambda^{2+\xi} \sqrt{2\pi}\,}\,e^{-\frac{1}{2}(\log \lambda)^2}\,.
  \end{split}
 \]
  From this, from part (iii), and from \eqref{eq:defnumeasure},
 \[
  \ud\nu_{\xi}(\lambda)\;=\;\lambda^{\xi+1}\,\ud\langle f,E^{A}(\lambda)f\rangle\;=\;\frac{\mathbf{1}_{\mathbb{R}^+}(\lambda)}{\,\lambda \sqrt{2\pi}\,}\,e^{-\frac{1}{2}(\log \lambda)^2}\,\ud\lambda\,.
 \]
  %(which shows also that $\nu_\xi$ is Lebesgue-absolutely continuous). 
  Part (iv) is proved.

  Last, concerning part (v), it is well known (see, e.g., \cite[Exercise 8.7]{Sullivan-UncertQuant}) that the space of polynomials on $[0,+\infty)$ is \emph{not dense} in $L^2([0,+\infty),\ud\nu_\xi)$ when the measure $\nu_\xi$, as in the present case, is a log-normal distribution. As an immediate consequence the solution for the Hamburger moment problem for $\nu_\xi$ is not unique: for any $\nu_\xi$-integrable function $\varphi$ that is $\nu_\xi$-orthogonal to the subspace of polynomials, the distinct measures $\nu_\xi$ and $(1+\varphi)\nu_\xi$ have obviously the same moments (see, e.g., \cite[Corollary 2.3.3]{Akhiezer-ClassicalMomentProb} for the same conclusion in abstract terms).
\end{proof}

\section{Numerical tests}\label{sec:num_test}

% \textcolor{red}{Here I would elaborate a numerical example that shows some convergence and supports the idea that the rate is arbitrarily small. To think.}

In this Section we discuss a selection of numerical tests that we run with the three-fold purpose of confirming the main features of our convergence result, corroborating our intuition on certain relevant differences with respect to the bounded case, and exploring the behaviour of the unbounded conjugate gradient algorithm beyond the regime covered by our main theorem.

We choose $\cH=L^2(\mathbb{R})$ and 
\begin{equation}
 \begin{split}
  \textrm{test-1a:} & \qquad A=-\frac{\ud^2}{\ud x^2}+\mathbbm{1}\,,\qquad\mathcal{D}(A)=H^2(\mathbb{R})\,,\qquad f(x)=e^{-x^2}\,, \\
  \textrm{test-1b:} & \qquad A=-\frac{\ud^2}{\ud x^2}\,,\qquad\quad\;\;\,\mathcal{D}(A)=H^2(\mathbb{R})\,,\qquad f(x)=e^{-x^2}\,, \\
  \textrm{test-2a:} & \qquad A=-\frac{\ud^2}{\ud x^2}+\mathbbm{1}\,,\qquad\mathcal{D}(A)=H^2(\mathbb{R})\,,\qquad f(x)=(1+x^2)^{-1}\,, \\
  \textrm{test-2b:} & \qquad A=-\frac{\ud^2}{\ud x^2}\,,\qquad\quad\;\;\,\mathcal{D}(A)=H^2(\mathbb{R})\,,\qquad f(x)=(1+x^2)^{-1}\,,
 \end{split}
\end{equation}
where $H^2$ denotes the usual Sobolev space of second order. In either case $A$ is an unbounded, injective, non-negative, self-adjoint operator on $\cH$; but only in tests 1a and 2a does $A^{-1}$ exist as an everywhere defined bounded operator.

We then consider the inverse linear problem $Af=g$ with the datum $g\in\mathrm{ran}A$ given by the above explicit choice of the solution $f$, and we construct conjugate gradient approximate solutions $f^{[N]}$ to $f$, namely $\xi$-iterates with $\xi=1$, with initial guess $f^{[0]}=0$ (the zero function on $\mathbb{R}$). Thus, each $f^{[N]}$ is searched for over the Krylov subspace $\mathcal{K}_N(A,g)=\mathrm{span}\{g,Ag,\dots,A^{N-1}g\}$. $f^{[0]}$ is trivially smooth, and so are $f$ and $g$, therefore the $1$-iterates are all well-defined. Owing to the injectivity of $A$ in all considered cases, necessarily $P_\mathcal{S}f^{[0]}=f$. The algorithm is well defined in all tests, as $f$ (and hence $g$) is a smooth function and is square-integrable together with all its derivatives (i.e., $g\in C^\infty(A)$).

Of course in practice we replace the minimisation \eqref{eq:thetaiterates} with the standard, equivalent algebraic construction for the $f^{[N]}$'s \cite{Saad-2003_IterativeMethods,Liesen-Strakos-2003}, so as to implement it as a routine in a symbolic computation software.

Iteratively we evaluate
\begin{equation}
 \begin{split}
  \rho_0(f^{[N]})\;&=\;\big\|f^{[N]}-f\big\|^2 \\
  \rho_1(f^{[N]})\;&=\;\big\langle f^{[N]}-f,A\,(f^{[N]}-f)\big\rangle \\
  \rho_2(f^{[N]})\;&=\;\big\|Af^{[N]}-g\big\|^2\,,
 \end{split}
\end{equation}
(see \eqref{eq:xi123} above) and we monitor the behaviour of such three quantities as $N$ increases.

The choice of the data $g$ in our tests is made so as tests 1 fall within the scope of our Theorem \ref{thm:main}, whereas tests 2 go \emph{beyond} it. Indeed:

\begin{lemma}\label{lem:anal-non-quasian} With respect to the Hilbert space $L^2(\mathbb{R},\ud x)$ and the self-adjoint operators $A=-\frac{\ud^2}{\ud x^2}+1$ or $A=-\frac{\ud^2}{\ud x^2}$ introduced above,
\begin{itemize}
 \item[(i)] the function $f=e^{-x^2}$, and hence $g=Af$ is analytic;
 \item[(ii)] the function $f=(1+x^2)^{-1}$, and hence $g=Af$ is not quasi-analytic. 
\end{itemize}
\end{lemma}

\begin{proof}
 (i) It suffices to show that $f$ is $A$-analytic, the same conclusion for $g$ then follows from Lemma \ref{lem:Aanalit-is-analit}. By a standard criterion, $A$-analyticity of $f$ is tantamount as $f\in\mathcal{D}(e^{z T})$ for all $z\in\mathbb{C}$ with $|\mathfrak{Re}\,z|$ small enough (see, e.g., \cite[Corollary 7.9]{schmu_unbdd_sa}). In terms of the Hilbert space isomorphism $L^2(\mathbb{R},\ud x)\xrightarrow{\cong}L^2(\mathbb{R},\ud p)$, $h\mapsto\widehat{h}$ induced by the Fourier transform, when $f=e^{-x^2}$ one has $\widehat{f}=\frac{1}{\sqrt{2}}e^{-\frac{1}{4}p^2}$ and the considered $A$'s are unitarily equivalent to the multiplication by $p^2+1$, or by $p^2$. Therefore,
 \[
  \widehat{e^{z A}f}\;=\;\frac{1}{\sqrt{2}}\,e^{z(p^2+1)}\,e^{-\frac{1}{4}p^2}\,\in\, L^2(\mathbb{R},\ud p)\qquad \textrm{for } |\mathfrak{Re}\,z|<\frac{1}{4}\,,
 \]
 and the same conclusion holds with $e^{z\,p^2}$ in place of $e^{z(p^2+1)}$, so both $A$'s are covered. Thus, $f=e^{-x^2}$ is indeed $A$-analytic. Incidentally the same criterion also shows that $f=(1+x^2)^{-1}$ is not analytic, because $\widehat{f}=\sqrt{\frac{\pi}{2}}\,e^{-|p|}$, and for no non-zero values of $\mathfrak{Re} z$ is the function 
 \[
  \widehat{e^{z A}f}\;=\;\sqrt{\frac{\pi}{2}}\,e^{z(p^2+1)}\,e^{-|p|}
 \]
 square-integrable on $\mathbb{R}$.
 
 (ii) It suffices to show that $f$ is not quasi-analytic for $A$, the same conclusion for $g$ then follows from Lemma \ref{lem:fpPf-analitic}(i). Besides, it suffices to show the lack of quasi-analyticity of $f$ with respect to $A=-\frac{\ud^2}{\ud x^2}$: then, since for $n\in\mathbb{N}$
 \[
  \begin{split}
   \|\textstyle{(-\frac{\ud^2}{\ud x^2}+\mathbbm{1})^n}f\|_{L^2}^2\;&=\;\langle f,\textstyle{(-\frac{\ud^2}{\ud x^2}+\mathbbm{1})^{2n}}f\rangle_{L^2}\;\geqslant\;\langle f,\textstyle{(-\frac{\ud^2}{\ud x^2})^{2n}}f\rangle_{L^2} \\
   &=\;\|\textstyle{(-\frac{\ud^2}{\ud x^2})^n}f\|_{L^2}^2\,,
  \end{split}
 \]
 one concludes
 \[
  \sum_{n\in\mathbb{N}}\|
  {\textstyle{(-\frac{\ud^2}{\ud x^2}+\mathbbm{1})^n}}
  f\|_{L^2}^{-\frac{1}{n}}\;\leqslant\;\sum_{n\in\mathbb{N}}\|\textstyle{(-\frac{\ud^2}{\ud x^2})^n}f\|_{L^2}^{-\frac{1}{n}}\;<\;+\infty
 \]
 (last inequality following from \ref{def:quasianalytic-vectors} and the fact that $f$ is not quasi-analytic for $-\frac{\ud^2}{\ud x^2}$), that is, the lack of quasi-analyticity of $f$ also for $-\frac{\ud^2}{\ud x^2}+\mathbbm{1}$. So now let $A=-\frac{\ud^2}{\ud x^2}$, $f=(1+x^2)^{-1}$, and using $\widehat{f}=\sqrt{\frac{\pi}{2}}\,e^{-|p|}$ we compute
 \[
  \begin{split}
  \|A^n f\|^2_{L^2}\;&=\;\int_\mathbb{R} \Big(\sqrt{\frac{\pi}{2}}\,(p^2)^ne^{-|p|}\Big)^2\,\ud p   \;=\;\pi\int_0^{+\infty}p^{4n}\,e^{-2 p}\,\ud p \\
  &=\;\pi\,\frac{\Gamma(1+4n)}{2^{1+4n}}\,.
  \end{split}
 \]
 Using the known asymptotics \cite[Eq.~(6.1.37)]{Abramowitz-Stegun-1964} of the gamma function
 \[
   \Gamma(t)\stackrel{t\to +\infty}{=}\sqrt{2\pi}\,e^{-t}\,t^{t-\frac{1}{2}}\,(1+O(t^{-1}))\,,
 \]
 we obtain, asymptotically as $n\to\infty$,
 \[
  \|A^n f\|^2_{L^2}\;=\;\frac{\pi\sqrt{2\pi}}{e}\,e^{-4n}\,(1+4n)^{\frac{1}{2}+4n}\,2^{-(1+4n)}\,(1+O(n^{-1}))\,,
 \]
 whence
 \[
  \|A^n f\|^{-\frac{1}{n}}_{L^2}\;=\;4 e^2 n^{-2}\,(1+O(n^{-1}))\,.
 \]
 This shows that the series $\sum_{n\in\mathbb{N}} \|A^n f\|^{-\frac{1}{n}}_{L^2}$ is asymptotics to $\sum_{n\in\mathbb{N}} n^{-2}$ and therefore converges. $f$ is not quasi-analytic for $A$.
\end{proof}

On account of Lemma \ref{lem:anal-non-quasian}, tests 1 are covered by our theoretical analysis: since obviously $f^{[0]}\in\mathscr{C}_{A,g}(\sigma)$ $\forall\sigma\geqslant 0$, then Theorem \ref{thm:main} ensures that $\rho_\sigma(f^{[N]})\to 0$ for any $\sigma\in[0,1]$. In particular, both the error ($\rho_0$) and the energy norm ($\rho_1$) are predicted to vanish as $N\to\infty$.

In the bounded case also the residual ($\rho_2$) would automatically vanish (Remark \ref{rem:Abdd}), but in tests-1 this indicator is not controlled by Theorem \ref{thm:main} and it is worth monitoring it.

A fourth meaningful quantity to monitor is $N^2\rho_1(f^{[N]})$. Recall indeed that \emph{if $A$ was bounded} the energy norm would be predicted to vanish not slower than a rate of order $N^{-2}$ (as given by \eqref{eq:Nem-conv-rate} with $\xi=1$, $\sigma=0$, $\sigma'=1$). Thus, detecting now the possible failure of $N^2\rho_1(f^{[N]})$ to stay bounded uniformly in $N$ is an immediate signature of the fact that one cannot apply to the unbounded-$A$ scenario the `classical' quantitative convergence rate predicted by Nemirovskiy and Polyak for the bounded-$A$ scenario \cite{Nemirovskiy-Polyak-1985}, which in fact was also proved to be optimal in that case \cite{Nemirovskiy-Polyak-1985-II}.

The results of tests 1a and 1b are shown respectively in Figure \ref{fig:1} and \ref{fig:2}.

\begin{figure}
\begin{minipage}{\textwidth}
\includegraphics[width=0.45\textwidth]{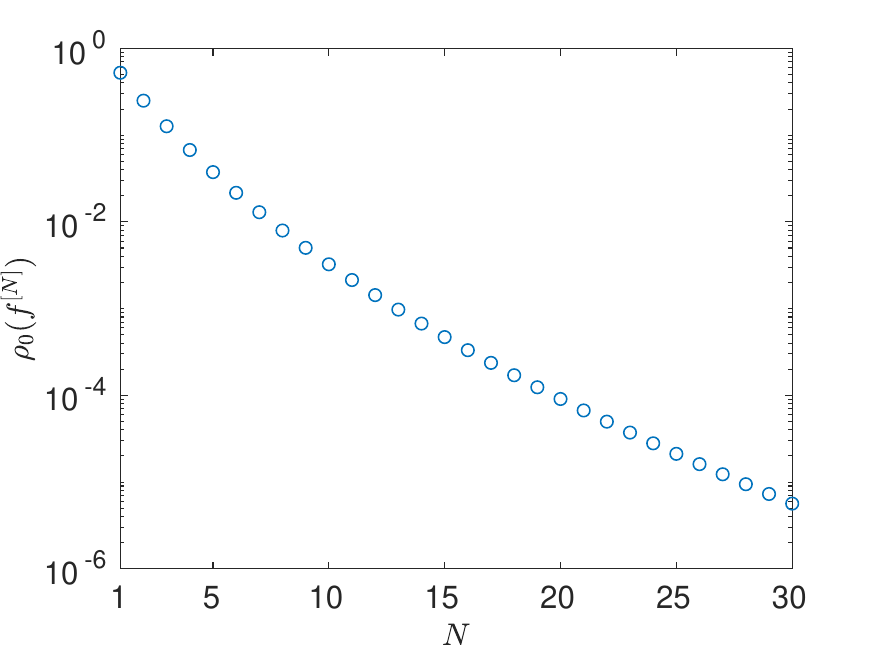}
\includegraphics[width=0.45\textwidth]{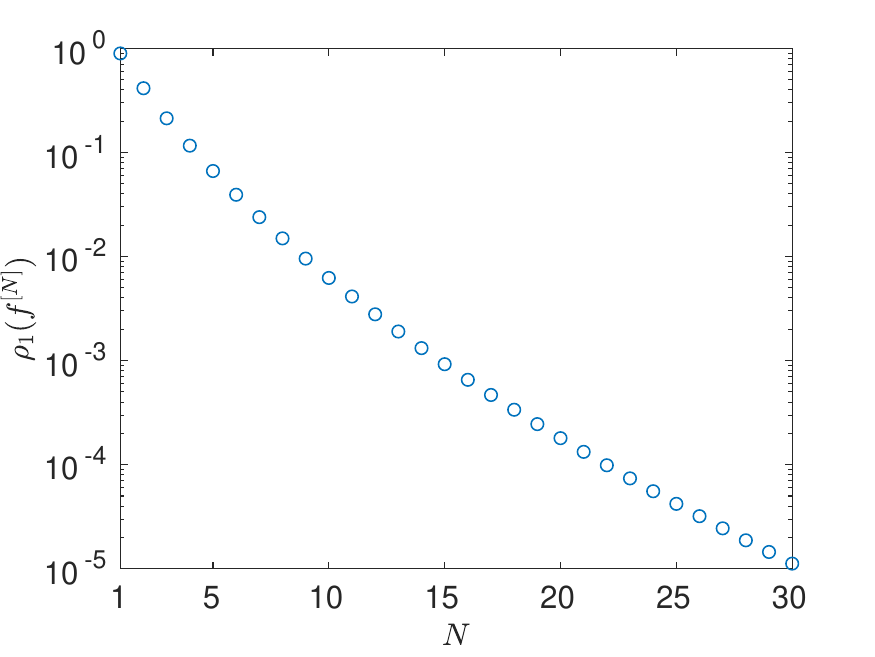}
\end{minipage}
\begin{minipage}{\textwidth}
\includegraphics[width=0.45\textwidth]{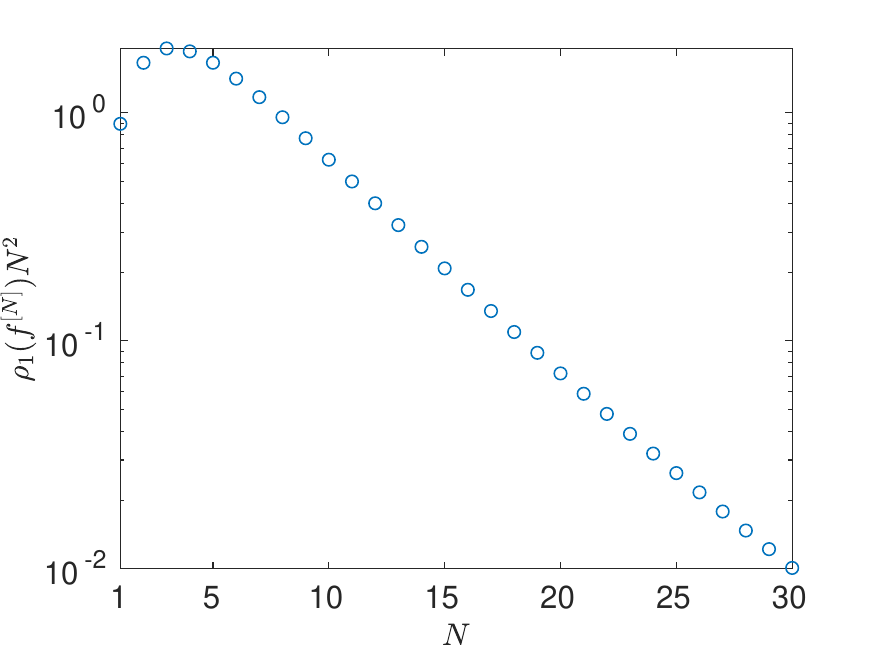}
\includegraphics[width=0.45\textwidth]{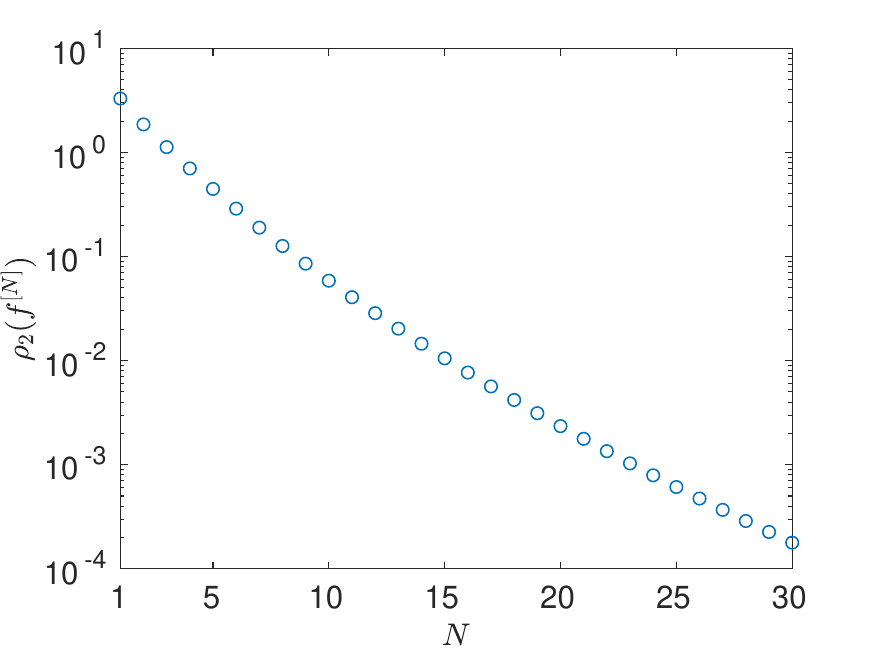}
\end{minipage}
\caption{Numerical experiments for test 1a. From top left: $\rho_0(f^{[N]})$, $\rho_1(f^{[N]})$, $\rho_1(f^{[N]})N^2$, and $\rho_2(f^{[N]})$ indicators of convergence vs $N$.}\label{fig:1}
\end{figure}

\begin{figure}
\begin{minipage}{\textwidth}
\includegraphics[width=0.45\textwidth]{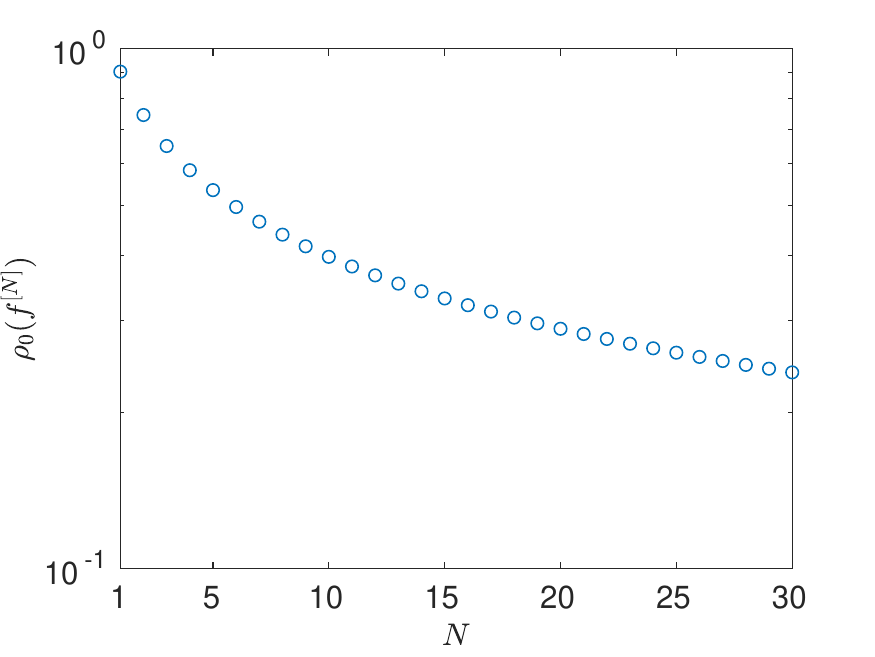}
\includegraphics[width=0.45\textwidth]{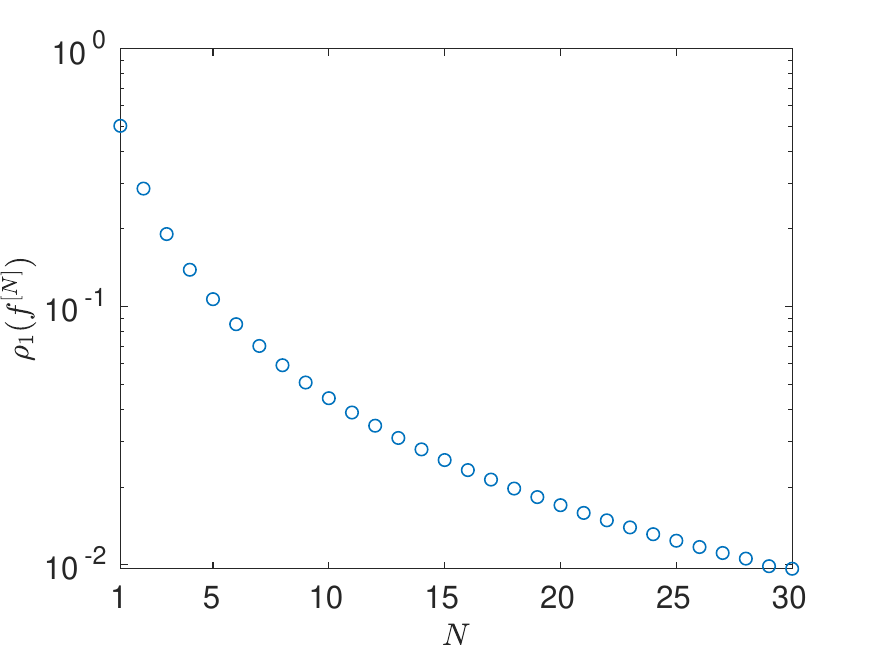}
\end{minipage}
\begin{minipage}{\textwidth}
\includegraphics[width=0.45\textwidth]{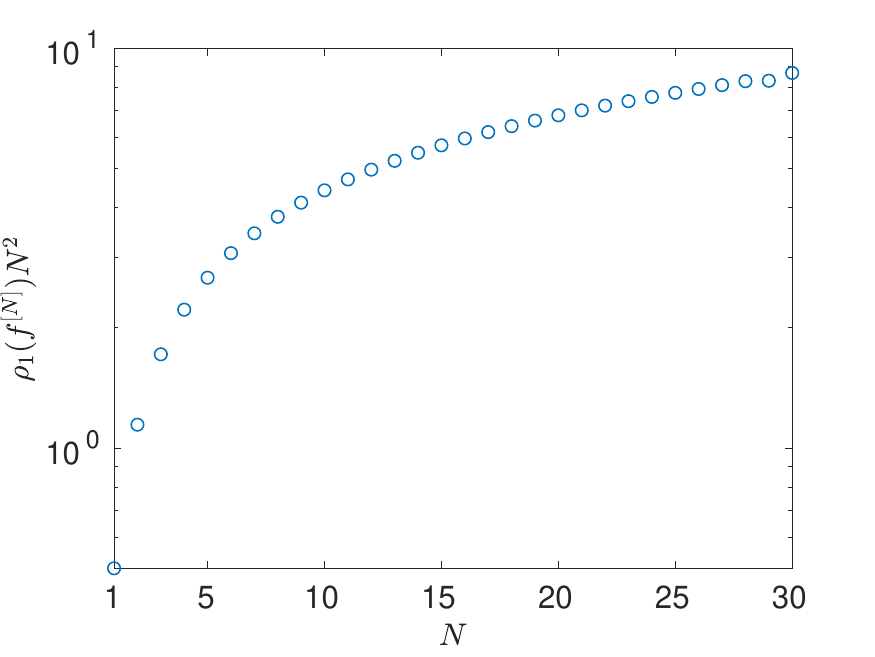}
\includegraphics[width=0.45\textwidth]{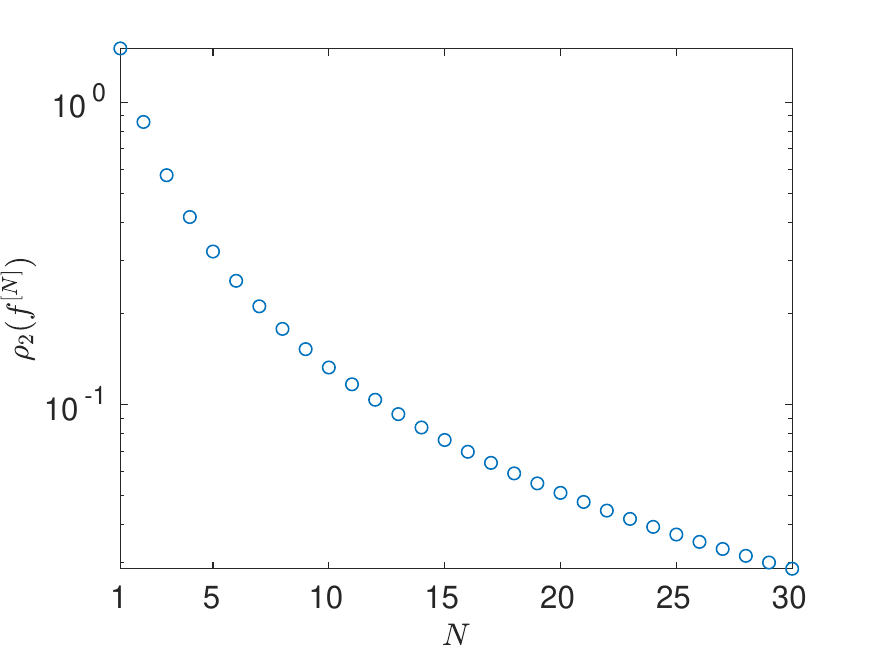}
\end{minipage}
\caption{Numerical experiments for test 1b. From top left: $\rho_0(f^{[N]})$, $\rho_1(f^{[N]})$, $\rho_1(f^{[N]})N^2$, and $\rho_2(f^{[N]})$ indicators of convergence vs $N$.}\label{fig:2}
\end{figure}

\begin{figure}
\begin{minipage}{\textwidth}
\includegraphics[width=0.45\textwidth]{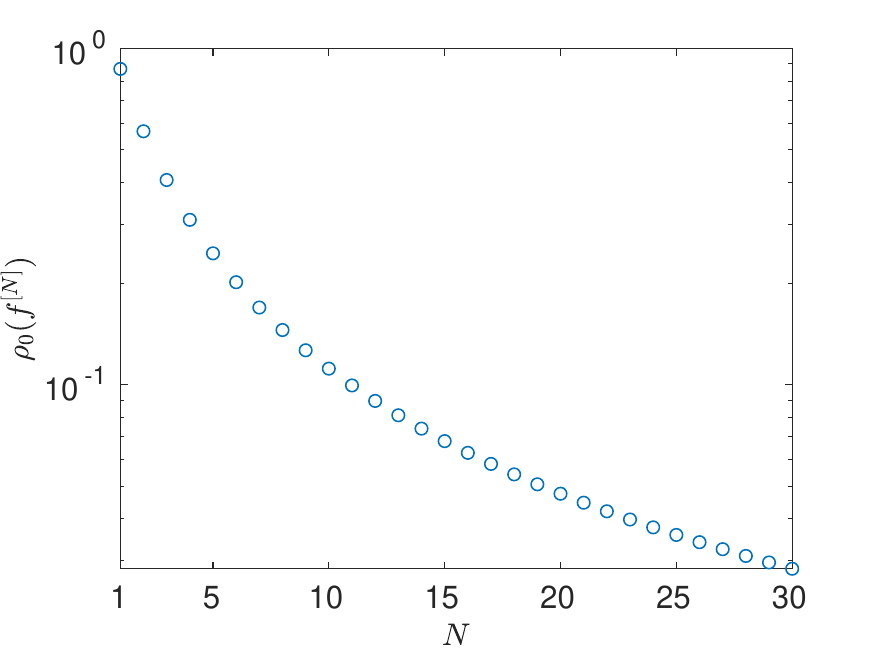}
\includegraphics[width=0.45\textwidth]{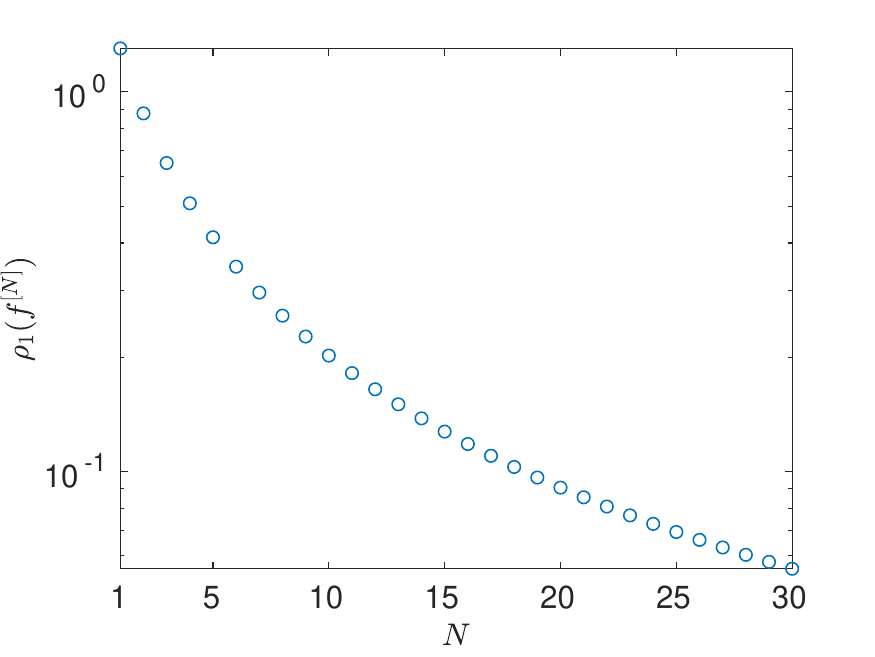}
\end{minipage}
\begin{minipage}{\textwidth}
\includegraphics[width=0.45\textwidth]{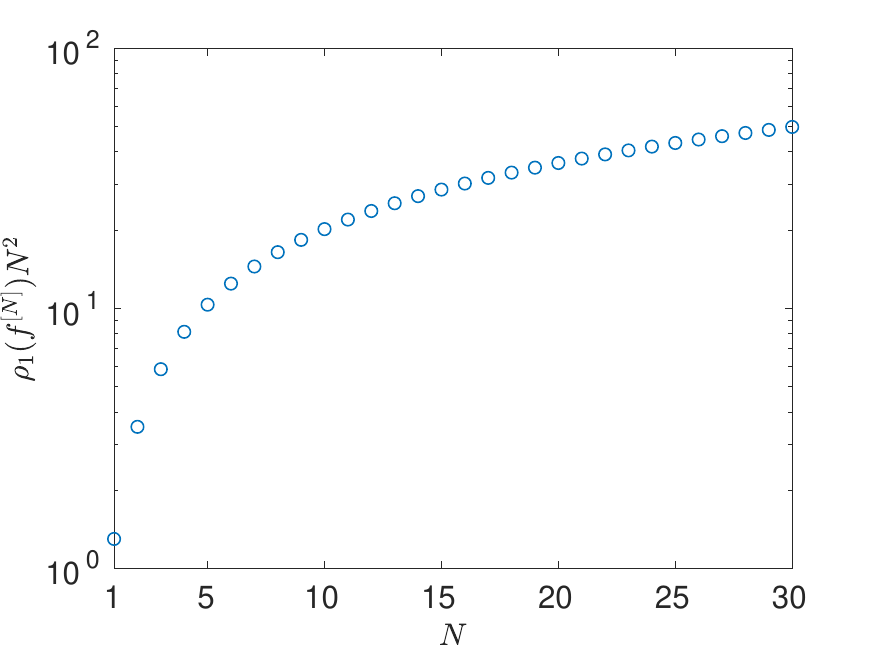}
\includegraphics[width=0.45\textwidth]{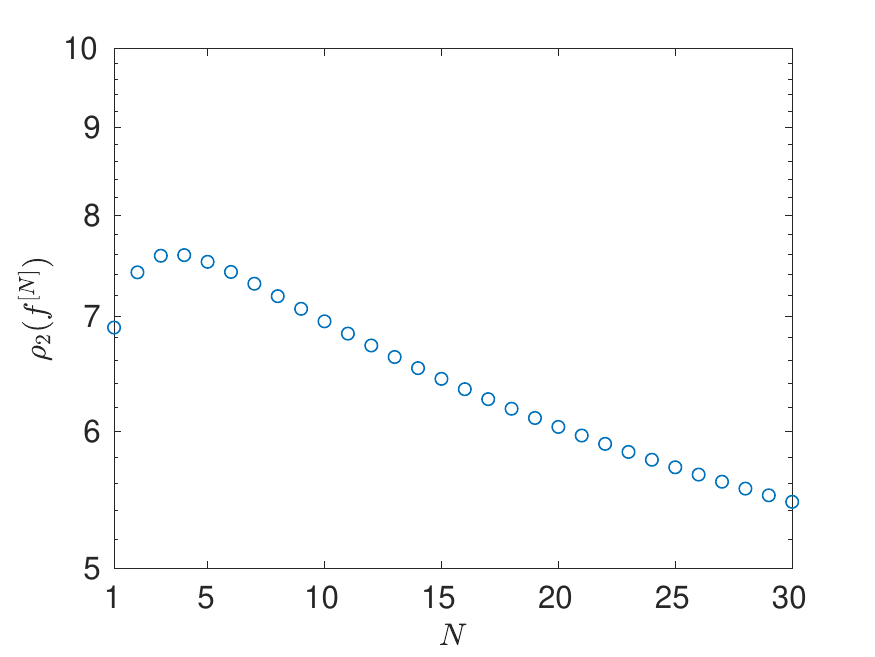}
\end{minipage}
\caption{Numerical experiments for test 2a. From top left: $\rho_0(f^{[N]})$, $\rho_1(f^{[N]})$, $\rho_1(f^{[N]})N^2$, and $\rho_2(f^{[N]})$ indicators of convergence vs $N$.}\label{fig:3}
\end{figure}

\begin{figure}
\begin{minipage}{\textwidth}
\includegraphics[width=0.45\textwidth]{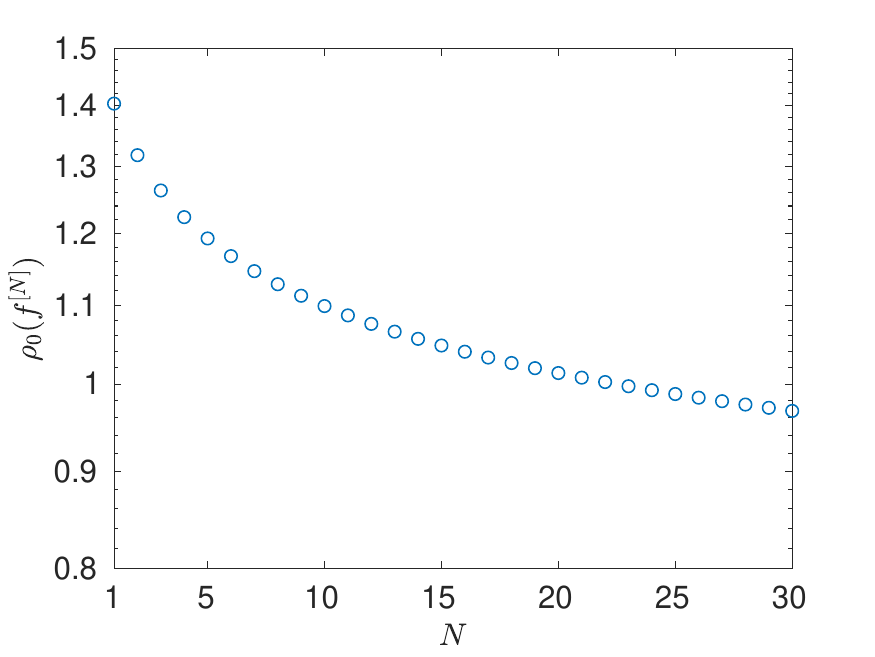}
\includegraphics[width=0.45\textwidth]{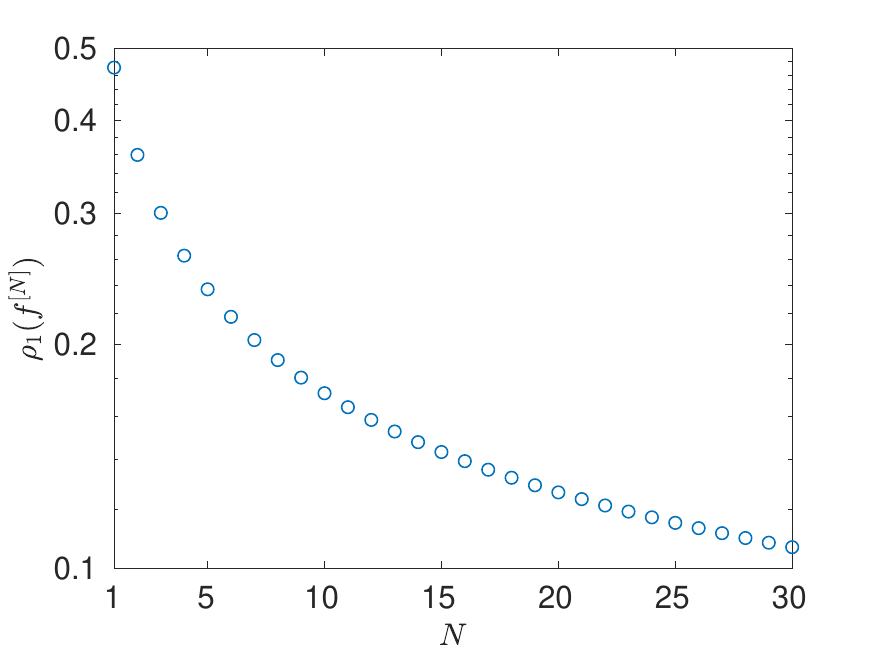}
\end{minipage}
\begin{minipage}{\textwidth}
\includegraphics[width=0.45\textwidth]{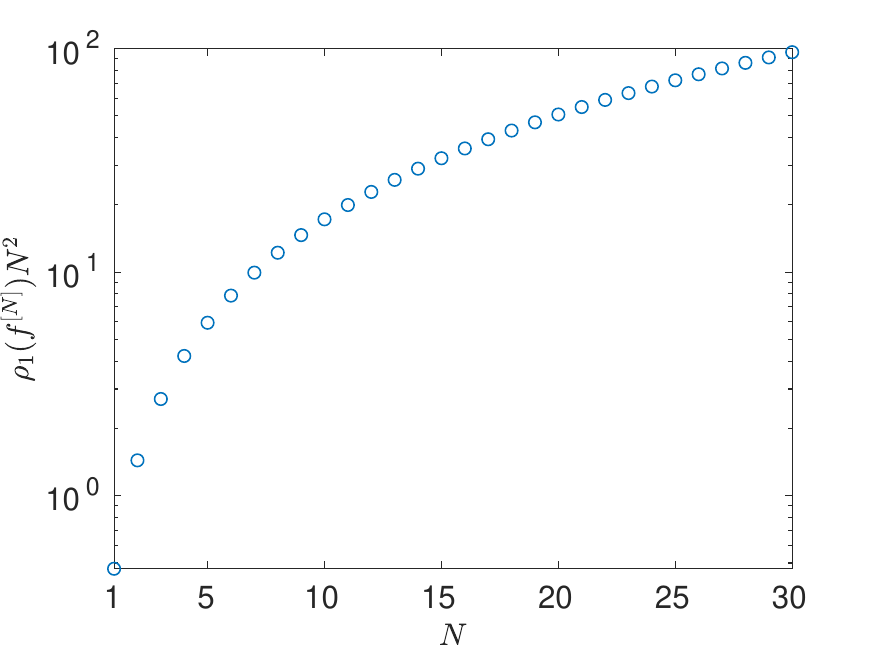}
\includegraphics[width=0.45\textwidth]{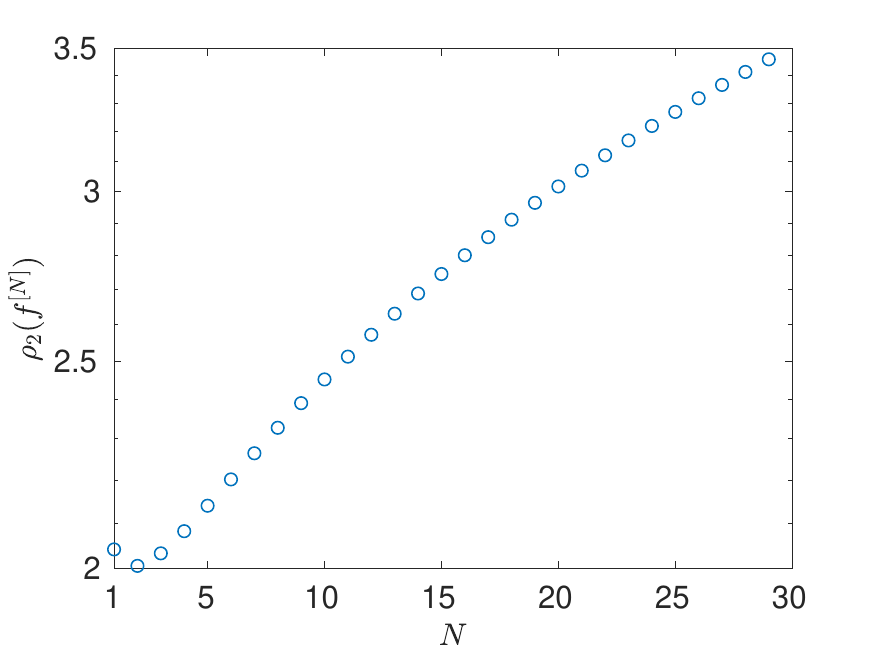}
\end{minipage}
\caption{Numerical experiments for test 2b. From top left: $\rho_0(f^{[N]})$, $\rho_1(f^{[N]})$, $\rho_1(f^{[N]})N^2$, and $\rho_2(f^{[N]})$ indicators of convergence vs $N$.}\label{fig:4}
\end{figure}

Both tests 1a and 1b reveal that the iterates not only converge in the sense of the error and of the energy norm as predicted by Theorem \ref{thm:main}, but also in the residual sense (not covered by Theorem \ref{thm:main}). Of course in retrospect the error's vanishing in test 1a is consistent with the residual's vanishing, owing to the boundedness of $A^{-1}$ in tests 1: indeed, obviously, 
\[
 \rho_0(f^{[N]})\;=\;\|f^{[N]}-f\big\|^2\;\leqslant\;\|A^{-1}\|_{\mathrm{op}}^2\|Af^{[N]}-g\big\|^2\;\leqslant\;\|A^{-1}\|_{\mathrm{op}}^2\,\,\rho_2(f^{[N]})\,.
\]
In addition, the classical Nemirovskiy-Polyak convergence rate for the energy norm is not violated in test 1a (Figure~\ref{fig:1}), whereas it appears to be violated in test 1b (Figure~\ref{fig:2}), where $A$ does not have a bounded inverse.

Heuristically, the slower vanishing rate of $\rho_0$, $\rho_1$, and $\rho_2$ in test 1b is due to the presence of zero in the spectrum of $A=-\frac{\ud^2}{\ud x^2}$: as we are approximating $A^{-1}g$ with polynomials $p_N(A)g$, the approximation to the inverse with polynomials is hampered about the ``bad'' spectral point $\lambda=0$.

As opposite to tests 1a and 1b, we know from Lemma \ref{lem:anal-non-quasian}(ii) that tests 2a and 2b, represented respectively in Figure~\ref{fig:3} and \ref{fig:4}, are \emph{not covered} by our theoretical analysis, but for the fact that the quantities $\rho_0(f^{[N]})$ and $\rho_1(f^{[N]})$ are surely predicted to stay uniformly bounded in $N$ (Remark \ref{rem:unif-bdd}). Such uniform boundedness is confirmed numerically.

In test 2a, where $A$ has bounded inverse on the whole $\cH$, numerics indicate that $\rho_0(f^{[N]})\to 0$ and $\rho_1(f^{[N]})\to 0$ as $N\to\infty$. That provides some practical evidence that there exist non-quasi-analytic data $g$ that still display ``good behaviour'', i.e., convergence at suitable $\rho_\sigma$-level. This is completely compatible with our Theorem \ref{thm:main} and Corollary \ref{cor:main}: the use of (quasi-)analyticity that we made therein is solely localised in Proposition \ref{prop:properties_of_zeros}(v) in order to apply Carleman's criterion for the determinacy of the Hamburger moment problem, and that criterion is only a sufficient condition for the uniqueness of the $\nu_\xi$-measure. In fact, investigating the nebulous regime beyond quasi-analyticity would be of substantial relevance to understand what minimal assumptions on $g$ and $f^{[0]}$ guarantee the uniqueness of the $\nu_\xi$-measure (that surely fails in certain cases, as we saw in Proposition \ref{prop:generic-regime}), or at least the convergence of the unbounded CG algorithm.

In comparison, in test 2b (unbounded $A^{-1}$) the decay rates of $\rho_0$ and $\rho_1$ appear to be slower than the counterpart 2a and it is unclear whether there is an actual vanishing, beside the evident decreasing behaviour.

The residual $\rho_2$ looks clearly decreasing in test 2a, with insufficient numerical evidence for vanishing, though, and instead manifestly divergent in test 2b. Here the solution $f=(1+x^2)^{-1}$ is not localised as the Gaussian of tests 1a and 1b, and has instead a long tail at large distances: the intuition suggests that this feature affects the convergence at higher regularity levels.

In either test 2a and 2b numerics give definite evidence of \emph{violation} of the Nemirovskiy-Polyak convergence rate.

\section*{Acknowledgements}

We warmly thank Prof Nemirovskiy for providing us with very useful comments and clarifications on his work \cite{Nemirovskiy-Polyak-1985}. A preliminary version of the present work was presented and discussed at the 6th Najman Conference on Spectral Theory and Differential Equations that took place in Sveti Martin na Muri (Croatia) in September 2019: we thank the organisers for that opportunity.

% \bibliographystyle{siam}
% \bibliography{bib_ALE}

\begin{thebibliography}{10}

\bibitem{Abramowitz-Stegun-1964}
{\sc M.~Abramowitz and I.~A. Stegun}, {\em {Handbook of mathematical functions
  with formulas, graphs, and mathematical tables}}, vol.~55 of {National Bureau
  of Standards Applied Mathematics Series}, For sale by the Superintendent of
  Documents, U.S. Government Printing Office, Washington, D.C., 1964.

\bibitem{Akhiezer-ClassicalMomentProb}
{\sc N.~I. Akhiezer}, {\em {The classical moment problem and some related
  questions in analysis}}, {Translated by N. Kemmer}, Hafner Publishing Co.,
  New York, 1965.

\bibitem{Brakhage-1987}
{\sc H.~Brakhage}, {\em {On ill-posed problems and the method of conjugate
  gradients}}, in {Inverse and Ill-Posed Problems}, H.~W. Engl and C.~Groetsch,
  eds., Academic Press, 1987, pp.~165--175.

\bibitem{CM-2019_ubddKrylov}
{\sc N.~A. Caruso and A.~Michelangeli}, {\em {Krylov {S}olvability of
  {U}nbounded {I}nverse {L}inear {P}roblems}}, Integral Equations Operator
  Theory, 93 (2021), p.~Paper No. 1.

\bibitem{CM-KrylovPerturbations-2020}
\leavevmode\vrule height 2pt depth -1.6pt width 23pt, {\em {Krylov solvability
  under perturbations of abstract inverse linear problems}}, arXiv.org: (2021).

\bibitem{CMN-2018_Krylov-solvability-bdd}
{\sc N.~A. Caruso, A.~Michelangeli, and P.~Novati}, {\em {On Krylov solutions
  to infinite-dimensional inverse linear problems}}, Calcolo, 56 (2019), p.~32.

\bibitem{CMN-truncation-2018}
\leavevmode\vrule height 2pt depth -1.6pt width 23pt, {\em {On general
  projection methods and convergence behaviours for abstract linear inverse
  problems}}, Asymptotic Analysis,  (2021).

\bibitem{Chihara-book-1978}
{\sc T.~Chihara}, {\em {An Introduction to Orthogonal Polynomials}}, Dover
  Publications, New York, 1978.

\bibitem{Cipra-SIAM-News}
{\sc B.~A. Cipra}, {\em {The best of the 20th century: Editors name top 10
  algorithms}}, SIAM News, 33 (2005).

\bibitem{Daniel-1967}
{\sc J.~W. Daniel}, {\em {The conjugate gradient method for linear and
  nonlinear operator equations}}, SIAM J. Numer. Anal., 4 (1967), pp.~10--26.

\bibitem{Daniel-1970}
\leavevmode\vrule height 2pt depth -1.6pt width 23pt, {\em {A correction
  concerning the convergence rate for the conjugate gradient method}}, SIAM
  Journal on Numerical Analysis, 7 (1970), pp.~10--26.

\bibitem{Dongarra-Sullivan-Best10-2000}
{\sc J.~Dongarra and F.~Sullivan}, {\em {The Top 10 Algorithms (Guest editors'
  intruduction)}}, Comput. Sci. Eng., 2 (2000), pp.~22--23.

\bibitem{Gilles-Townsend-2019}
{\sc M.~Gilles and A.~Townsend}, {\em {Continuous Analogues of Krylov Subspace
  Methods for Differential Operators}}, SIAM Journal on Numerical Analysis, 57
  (2019), pp.~899--924.

\bibitem{Hanke-ConjGrad-1995}
{\sc M.~Hanke}, {\em {Conjugate gradient type methods for ill-posed problems}},
  vol.~327 of {Pitman Research Notes in Mathematics Series}, Longman Scientific
  \& Technical, Harlow, 1995.

\bibitem{Herzog-Ekkehard-2015}
{\sc R.~Herzog and E.~Sachs}, {\em {Superlinear convergence of {K}rylov
  subspace methods for self-adjoint problems in {H}ilbert space}}, SIAM J.
  Numer. Anal., 53 (2015), pp.~1304--1324.

\bibitem{Hestenes-Stiefer-ConjGrad-1952}
{\sc M.~R. Hestenes and E.~Stiefel}, {\em {Methods of conjugate gradients for
  solving linear systems}}, J. Research Nat. Bur. Standards, 49 (1952),
  pp.~409--436 (1953).

\bibitem{Kammerer-Nashed-1972}
{\sc W.~J. Kammerer and M.~Z. Nashed}, {\em {On the convergence of the
  conjugate gradient method for singular linear operator equations}}, SIAM J.
  Numer. Anal., 9 (1972), pp.~165--181.

\bibitem{Koornwinder_OrthPoly}
{\sc T.~H. Koornwinder}, {\em {Orthogonal polynomials}}, in {Computer algebra
  in quantum field theory}, {Texts Monogr. Symbol. Comput.}, Springer, Vienna,
  2013, pp.~145--170.

\bibitem{Liesen-Strakos-2003}
{\sc J.~Liesen and Z.~e. {Strako\v s}}, {\em {Krylov subspace methods}},
  {Numerical Mathematics and Scientific Computation}, Oxford University Press,
  Oxford, 2013.
\newblock Principles and analysis.

\bibitem{Louis-1987}
{\sc A.~K. Louis}, {\em {Convergence of the conjugate gradient method for
  compact operators}}, in {Inverse and Ill-Posed Problems}, H.~W. Engl and
  C.~Groetsch, eds., Academic Press, 1987, pp.~177--183.

\bibitem{Nemirovskiy-Polyak-1985}
{\sc A.~S. Nemirovskiy and B.~T. Polyak}, {\em {Iterative methods for solving
  linear ill-posed problems under precise information. {I}}}, Izv. Akad. Nauk
  SSSR Tekhn. Kibernet.,  (1984), pp.~13--25, 203.

\bibitem{Nemirovskiy-Polyak-1985-II}
\leavevmode\vrule height 2pt depth -1.6pt width 23pt, {\em {Iterative methods
  for solving linear ill-posed problems under precise information. {II}}},
  Engineering Cybernetics, 22 (1984), pp.~50--57.

\bibitem{Olver-2009}
{\sc S.~Olver}, {\em {GMRES for the Differentiation Operator}}, SIAM Journal on
  Numerical Analysis, 47 (2009), pp.~3359--3373.

\bibitem{rs2}
{\sc M.~Reed and B.~Simon}, {\em {Methods of modern mathematical physics. {II}.
  {F}ourier analysis, self-adjointness}}, Academic Press [Harcourt Brace
  Jovanovich, Publishers], New York-London, 1975.

\bibitem{Saad-2003_IterativeMethods}
{\sc Y.~Saad}, {\em {Iterative methods for sparse linear systems}}, Society for
  Industrial and Applied Mathematics, Philadelphia, PA, second~ed., 2003.

\bibitem{schmu_unbdd_sa}
{\sc K.~Schm{\"u}dgen}, {\em {Unbounded self-adjoint operators on {H}ilbert
  space}}, vol.~265 of {Graduate Texts in Mathematics}, Springer, Dordrecht,
  2012.

\bibitem{Shohat-Tamarkin_ProblemOfMoments1943}
{\sc J.~A. Shohat and J.~D. Tamarkin}, {\em {The {P}roblem of {M}oments}},
  {American Mathematical Society Mathematical surveys, vol. I}, American
  Mathematical Society, New York, 1943.

\bibitem{Simoncini-Szyld-2007}
{\sc V.~Simoncini and D.~B. Szyld}, {\em {Recent computational developments in
  Krylov subspace methods for linear systems}}, Numerical Linear Algebra with
  Applications, 14 (2007), pp.~1--59.

\bibitem{Sullivan-UncertQuant}
{\sc T.~J. Sullivan}, {\em {Introduction to uncertainty quantification}},
  vol.~63 of {Texts in Applied Mathematics}, Springer, Cham, 2015.

\bibitem{Szego-OrthPolyBook}
{\sc G.~Szeg\H{o}}, {\em {Orthogonal polynomials}}, American Mathematical
  Society, Providence, R.I., fourth~ed., 1975.
\newblock American Mathematical Society, Colloquium Publications, Vol. XXIII.

\end{thebibliography}

\def\cprime{$'$}

\end{document}